\documentclass[12pt,oneside,a4paper]{amsart} 
\usepackage[]{fontenc}
\usepackage[latin1]{inputenc}
\usepackage{amssymb,amsmath}

 \theoremstyle{plain}
\newtheorem{thm}{Theorem}[section]
  \theoremstyle{plain}
  \newtheorem{prop}[thm]{Proposition}
  \theoremstyle{plain}
  \newtheorem{cor}[thm]{Corollary}
  \theoremstyle{plain}
  \newtheorem{lem}[thm]{Lemma}
\theoremstyle{plain}
  \newtheorem{rem}[thm]{Remark}
\theoremstyle{plain}
  \newtheorem{conj}[thm]{Conjecture}
\theoremstyle{plain}
  \newtheorem{defn}[thm]{Definition}


\usepackage{amsfonts}

\usepackage{amscd}

\usepackage{bbold}

\usepackage{graphicx}

\usepackage[all]{xy}


\newcommand{\C}{\mathbb{C}}
\newcommand{\tr}[1]{\mathrm{tr}(#1)}
\newcommand{\xb}{X}

\newcommand{\wb}{W}

\newcommand{\aq}{/\!\!/}
\newcommand{\X}{\mathfrak{X}}

\newcommand{\R}{\mathfrak{R}}
\newcommand{\hm}{\mathrm{Hom}}

\newcommand{\F}{\mathtt{F}}

\newcommand{\xt}{\mathtt{x}}
\newcommand{\yt}{\mathtt{y}}
\newcommand{\wt}{\mathtt{w}}

\newcommand{\id}{I}

\newcommand{\ti}[1]{t_{(#1)}}

\newcommand{\SLm}[1]{\mathrm{SL}(#1,\C)}
\newcommand{\GLm}[1]{\mathrm{GL}(#1,\C)}
\newcommand{\glm}[1]{\mathfrak{gl}(#1,\C)}

\newcommand{\SUm}[1]{\mathrm{SU}(#1)}

\newcommand{\norm}[1]{|\!|#1 |\!|}
\newcommand{\End}{\mathrm{End}}

\makeatother

\begin{document}

\bibliographystyle{amsalpha} 

\title[Topology of moduli of free group representations]{The topology of moduli spaces of free group representations}

\author{Carlos Florentino and Sean Lawton}

\begin{abstract}
For any complex affine reductive group $G$ and a fixed choice of maximal compact subgroup $K$,
we show that the $G$-character variety of a free group strongly deformation retracts to the corresponding $K$-character space, which is a real semi-algebraic set.  Combining this with constructive invariant theory and classical topological methods, we show that the $\SLm{3}$-character variety of a rank 2 free group is homotopic to an 8 sphere and the $\SLm{2}$-character variety of a rank 3 free group is homotopic to a 6 sphere.
\end{abstract}

\maketitle


\section{Introduction}
Let $G$ be a complex affine reductive group, and let $K$ be a maximal compact subgroup.  Define $\R_r(G)=\mathrm{Hom}(\F_r,G)$ to be the set of homomorphisms of a rank $r$ free group $\F_r$ into $G$.  The conjugation action of $G$ on $\R_r(G)$ is regular and the categorical quotient $\X_r(G)=\R_r(G)\aq G$ is a singular reduced algebraic set (irreducible if $G$ is irreducible).  There is a related space (a semi-algebraic set) $\X_r(K)=\mathrm{Hom}(\F_r,K)/K$.

In 2001 Bratholdt and Cooper \cite{BC} showed, among other things, that $\X_r(\SLm{2})$ strongly deformation retracts to $\X_r(\SUm{2})$.  The purpose of this paper is to generalize this result.  In \cite{BC} the authors exploit the fact that $\SLm{2}$ acts as an isometry group on hyperbolic 3-space to prove their theorems.  It is not clear how to directly generalize their methods to even $\SLm{3}$. However, our main theorem states that this holds in much greater generality: 

\begin{thm}
\label{thm:main}
Let $G$ be a complex affine reductive group, and let $K$ be a maximal compact subgroup.  Then $\X_r(G)$ strongly deformation retracts onto $\X_r(K)$.  In particular, they have the same homotopy type.
\end{thm}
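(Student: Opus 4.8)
The plan is to present $\X_r(G)$ as a genuine topological quotient via the Kempf--Ness theorem and then push the coordinatewise polar decomposition of $G$ through that presentation. Since $\F_r$ is free, evaluation at the generators identifies $\R_r(G)=\hm(\F_r,G)$ with $G^r$ under diagonal conjugation, so $\X_r(G)=G^r\aq G$ and $\X_r(K)=K^r/K$. Choose a faithful $G$-module $W$ with a $K$-invariant Hermitian inner product for which $G$ is stable under $g\mapsto g^{*}$, and let $\g=\mathfrak k\oplus\mathfrak p$ be the corresponding Cartan decomposition, so that $\exp(\mathfrak p)$ is the set of positive-definite elements of $G$. Via $g\mapsto(g,g^{-1})$ we realize $G^r$ as a $G$-equivariant, $*$-stable, Zariski-closed subvariety of the Hermitian $K$-representation $V:=\End(W)^{2r}$, with $0\notin G^r$. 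Hence every point of $G^r$ is semistable, and the Kempf--Ness theorem applies: the Kempf--Ness set $\K:=\{v\in G^r:\norm{v}\le\norm{g\cdot v}\text{ for all }g\in G\}$ (equivalently the zero locus on $G^r$ of the moment map) is closed and $K$-invariant, contains $K^r$, and the inclusion $\K\hookrightarrow G^r$ induces a \emph{homeomorphism} $\K/K\xrightarrow{\ \sim\ }G^r\aq G=\X_r(G)$.

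Two elementary facts feed in next. First, a homomorphism $\F_r\to K$ has compact, hence reductive, Zariski-closed image in $G$, so it is polystable and its $G$-orbit is closed; and if $g=u\exp(\xi)$ with $u\in K$, $\xi\in\mathfrak p$ conjugates one such homomorphism to another, applying $g\mapsto g^{*}$ to the conjugacy relation forces $\exp(\xi)$ to commute with the images of the generators (because $\exp(2\xi)$ does and $\exp$ is injective on Hermitian matrices), so $u$ alone already conjugates them. Thus $\X_r(K)=K^r/K\to\X_r(G)$ is injective, hence --- being a continuous injection of a compact space into a Hausdorff one --- a closed embedding, and we henceforth view $\X_r(K)$ as a (semi-algebraic) subspace of $\X_r(G)$. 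Second, for $v=(A_1,\dots,A_r)\in\K$ the polar decomposition applies in each coordinate, $A_i=U_iP_i$ with $U_i\in K$ and $P_i=(A_i^{*}A_i)^{1/2}\in\exp(\mathfrak p)\subset G$, and for $s\in[0,1]$ we set $v^{(s)}:=\big(U_1P_1^{\,1-s},\dots,U_rP_r^{\,1-s}\big)\in G^r$, where $P_i^{\,1-s}:=\exp\big((1-s)\log P_i\big)$. This path is continuous in $(v,s)$, has $v^{(0)}=v$ and $v^{(1)}=(U_1,\dots,U_r)\in K^r$, is constant on $K^r$ (there $P_i=\id$), and is $K$-equivariant, $(kvk^{-1})^{(s)}=k\,v^{(s)}\,k^{-1}$, by uniqueness of polar decomposition.

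Now assemble. Let $q\colon G^r\to G^r\aq G$ be the quotient morphism, which is continuous. The map $\K\times[0,1]\to\X_r(G)$, $(v,s)\mapsto q\big(v^{(s)}\big)$, is continuous, and by $K$-equivariance of $v\mapsto v^{(s)}$ and $G$-invariance of $q$ it factors through $\K/K\times[0,1]$; identifying $\K/K$ with $\X_r(G)$ via Kempf--Ness, we obtain a continuous map $H\colon\X_r(G)\times[0,1]\to\X_r(G)$. By construction $H(\cdot,0)=\mathrm{id}$, $H(\X_r(G),1)\subseteq\X_r(K)$ (the class of the unitary tuple $(U_i)$), and $H(\cdot,s)$ fixes $\X_r(K)$ pointwise for every $s$ (a point of $\X_r(K)$ is represented by some $w\in K^r\subset\K$, for which $w^{(s)}=w$). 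Therefore $H$ is a strong deformation retraction of $\X_r(G)$ onto $\X_r(K)$; in particular they have the same homotopy type.

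The one genuine investment is the first paragraph: arranging a closed, $G$-equivariant, $*$-stable embedding of $G^r$ and invoking the Kempf--Ness correspondence $\X_r(G)\cong\K/K$ in its topological --- not merely set-theoretic --- form. Given that, everything else is soft: the naive polar retraction $G^r\to K^r$ is badly non-$G$-equivariant, but on the slice $\K$ it is $K$-equivariant, and post-composing with $q$ repairs it on the quotient. The one secondary point not to be skipped is that $\X_r(K)$ really does embed in $\X_r(G)$, i.e.\ that $G$-conjugacy and $K$-conjugacy agree on $\hm(\F_r,K)$, which is the short $*$-argument recorded above.
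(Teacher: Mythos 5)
Your argument is correct and is essentially the paper's own proof: the homotopy you assemble --- pass to the Kempf--Ness representative, apply the coordinatewise $K$-equivariant polar deformation, and project back through the quotient map --- is exactly the paper's $\Phi_{t}^{\sigma}=\pi_{G/K}\circ\Phi_{t}\circ\sigma$, built from the same two inputs (the Schwarz--Neeman homeomorphism $\mathcal{KN}/K\cong\X_r(G)$ and the $K$-equivariant polar retraction of Proposition \ref{GKdeformation}) and verified by the same three properties. The only genuine divergences are in the auxiliary checks: you justify $K^{\times r}\subset\mathcal{KN}$ by the moment-map/normality observation where the paper differentiates the norm functional explicitly, and you prove Lemma \ref{unitarylemma} by the polar-decomposition $*$-argument rather than the paper's diagonalization-and-Weyl-group proof in the appendix --- both routes are sound.
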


Moreover, we classify the homeomorphism types in the cases $(r,n)=(1,n),(r,1),(2,2),(3,2)$ and $(2,3)$ for $K=\SUm{n}$.

The free groups $\F_r$ which we consider can be realized as the fundamental group of a surface with non-empty boundary.  The case of closed surface groups have been extensively studied due to the relationship between these character varieties and the moduli spaces of vector bundles and Higgs bundles.  In these cases, topological questions have almost exclusively been addressed using ``infinite dimensional Morse theory''.  Such methods do not immediately apply to the study of character varieties of free groups, and our main result completely reduces all such questions to the study of compact quotients.  

It is natural to speculate if the above theorem can be further generalized from free groups to arbitrary finitely generated groups (or at least to closed surface groups). Abelian representations, corresponding to representations of the fundamental group of a torus, are a non-counterexample.  It turns out that the ``twisted character varieties'' of closed surface groups (those which provide the correspondence to Higgs bundles and vector bundles) provide counterexamples for the cases of genus $g\geq2$. 

The paper is organized as follows.  In section 2 we introduce the basic definitions of character spaces and set down the notation for the remainder of the paper.  In section 3 we show there exists an auxiliary space that strongly deformation retracts to the $K$-character space (not generally an algebraic set) using classical Lie theory alone.  In section 4 we introduce the reader to the powerful ideas from Kempf-Ness and use them to show that the auxiliary space from section 3 also strongly deformation retracts to the $G$-character variety  .  This allows us to prove a weaker version of our main theorem; namely, that $\X_r(K)$ and $\X_r(G)$ are homotopy equivalent.  In section 5, we make some natural choices and show that we can conclude the stronger statement that the homotopy equivalence is a strong deformation retraction, thus finishing the proof of Theorem \ref{thm:main}.  The discussion becomes slightly technical, but we dispurse it into a series of lemmas to ease the reading.  We end section 5 with a brief discusion of how the deformation retraction relates to symplectic reduction.  In section 6 we compute the homeomorphism types of a number of examples of $\SUm{n}$ character varieties, and conjecture that the list provides a complete classification of the examples that are topological manifolds, and briefly say why we believe this to be true.  In the last section we discuss abelian representations and show our main theorem does extend to some finitely generated groups beyond free groups, and say more about the fact our results do not extend to all finitely generated groups.

\section*{Acknowledgments}

The first author thanks  J. Mour\~ao and J. P. Nunes, for several motivating conversations on the geometry of unitary surface group representations.  The second author thanks W. Goldman for introducing him to the general topic of character varieties, and to the Instituto Superior T\'ecnico for its support during the $2007-2008$ academic year.  We also thank Peter Gothen for helpful conversations, and the referee whose suggestions improved the current version of this paper. This work was partially supported by the Center for Mathematical Analysis, Geometry and Dynamical Systems at I.S.T., and by the ``Funda\c c\~ao para a Ci\^encia e a Tecnologia" through the programs Praxis XXI, POCI/MAT/58549/2004 and FEDER.  
Graphics herein created with Wolfram/Mathematica.

\section{Preliminaries}

\subsection{Complex affine reductive groups}

Throughout this paper $G$ will be a complex affine reductive group.  

An algebraic group is a group that is an algebraic variety (reduced zero set of a finite number of polynomials) such that the group operations are all regular (polynomial) mappings.  A complex affine group is an algebraic group that is the complex points of an affine variety.  Any affine group has a faithful linear representation (see \cite{Do} for instance), thus it is a closed subgroup of a general linear group and hence a linear Lie group.   Lie groups are smooth, and irreducible complex varieties are connected (see for instance \cite{Sh} page 321).

Let $K$ be a compact Lie group.  Then $K$ is a real algebraic group which embeds in $\mathrm{O}(n,\mathbb{R})$ for some $n$.  Since $K$ is algebraic there is an ideal $\mathfrak{I}$ in the real coordinate ring $\mathbb{R}[\mathrm{O}(n,\mathbb{R})]$ defining its points.  Let $G=K_{\C}$ be the complex zeros of $\mathfrak{I}$, called the \emph{complexification} of $K$.  Then $G$ is a complex affine subgroup of $\mathrm{O}(n,\C)$ with coordinate ring $\C[G]=\mathbb{R}[K]\otimes_{\mathbb{R}}\C$.  Any complex affine group $G$ which arises in this fashion is called {\it reductive}.  The ``unitary trick'' shows $\SLm{n}$ is reductive.  We note that this definition, although not the most general, coincides with all more general notions of reductivity when the algebraic group is complex linear.  In particular, another equivalent definition is that a complex linear algebraic group $G$ is reductive if for every finite dimensional representation of $G$ all subrepresentations have invariant complements.  The important observation is that such groups act like and have the algebraic structure of compact groups.  See \cite{Sch1}.

For example, $\mathrm{U}(n)=\{M\in\GLm{n}\ | \ M\overline{M}^{\mathsf{t}}=\id\}$, where $\id$ is the $n\times n$ identity matrix and $M^\mathsf{t}$ is the transpose of $M$.  Writing $M=A+\sqrt{-1}B$, we have that $\mathrm{U}(n)\cong$ $$\left\{\left(\begin{array}{cc}A&B\\-B&A\end{array}\right)\in \mathrm{GL}(2n,\mathbb{R})\ \ |\ A^\mathsf{t} A+B^\mathsf{t} B=I\ \&\ A^\mathsf{t} B- B^\mathsf{t} A=0\right\},$$  which sits isomorphically in $\mathrm{GL}(2n,\C)$ as $$\left\{\left(\begin{array}{cc}k&0\\0&(k^{-1})^\mathsf{t}\end{array}\right)\in \mathrm{GL}(2n,\C)\ |\ k\in \mathrm{U}(n)\right\}.$$  Letting $k$ be arbitrary in $\GLm{n}$ realizes the complexification $\mathrm{U}(n)_{\C}=\GLm{n}$.  In this way $\mathrm{U}(n)$ becomes the real locus of the complex variety $\GLm{n}$. Similarly, $\mathrm{SU}(n)_{\C}=\SLm{n}$.

\begin{rem}
We have not assumed $K$ is connected. Any compact Lie group $K$ has a finite number of connected components, 
all homeomorphic to the component containing the identity.   
As an algebraic variety, $\C[K_{\C}]$ has irreducible algebraic components (with respect to the Zariski topology).  
However, in this case the irreducible algebraic components are all disjoint homeomorphic topological components 
(with respect to the usual ball topology on $K_{\C}$), and each arises by complexifying a component of $K$ (see \cite{B} page 87).  
Thus we lose no generality in what follows by assuming that $G$ is an irreducible variety, although our arguments do not require it.
\end{rem}

\subsection{Representation Spaces}

Let $\F_r=\langle \xt_1,...,\xt_r\rangle$ be a rank $r$ free group (non-abelian), $K$ a compact Lie group and $G=K_{\C}$ its complexification.  Any homomorphism $\rho:\F_r\to G$ is a representation since $G\subset \mathrm{GL}(V)$ for some complex vector space $V$.  We call the set $\R_r(G)=\mathrm{Hom}(\F_r,G)$ the $G$-\emph{representation variety of} $\F_r$, and say $\rho \in \R_r(G)$ is $G$-{\it valued}.  The evaluation map, $$\rho\mapsto (\rho(\xt_1),...,\rho(\xt_r)),$$ gives a bijection between $\R_r(G)$ and $G^{\times r}$.  Since $G$ is a smooth affine variety, $\R_r(G)$ is likewise a smooth affine variety.  We note that we can assume without loss of generality that $\mathbb{R}[K]$ has no nilpotents.  Thus, $\C[K_{\C}]$ is reduced (no nilpotents) as well which implies $\R_r(G)$ is reduced.

The conjugation action of $G$ on $\R_r(G)$ is regular; that is, $G\times \R_r(G)\to \R_r(G)$ is regular (such mappings are rational functions in the matrix entries of $G$).  In particular, this action is either $(g,\rho)\mapsto g\rho g^{-1}$ or $$\left(g,\left(\rho(\xt_1),...,\rho(\xt_r)\right)\right)\mapsto \left(g\rho(\xt_1)g^{-1},...,g\rho(\xt_r)g^{-1}\right)$$ depending on whether we are working with $\mathrm{Hom}(\F_r,G)$ or $G^{\times r}$, respectively.  We often switch back and forth as is convenient.

We likewise have the real algebraic variety $\R_r(K)\cong K^{\times r}$, where $K$ is a maximal compact subgroup of $G$.

\subsection{Character Spaces}

\subsubsection{The complex quotient $\X_r(G)$}
The quotient $\R_r(G)/G$ is not generally a variety. One exception is when $G=\GLm{1}\cong
\C^*$ and $K=\mathrm{U}(1)\cong S^1$.  In these cases the conjugation action is trivial and thus $\R_r(\mathrm{U}(1))/\mathrm{U}(1)\cong (S^1)^{\times r}$ is the geometric $r$-torus.  Complexifying we find that $\R_r(\GLm{1})/\GLm{1}\cong (\C^*)^{\times r}$ is the algebraic $r$-torus.  Otherwise, the quotient is generally not Hausdorff.  For instance when $G=\SLm{2}$ and $r=1$ one cannot separate, using only continuous functions, the conjugation orbit of $\left(\begin{array}{cc}1&1\\0&1\end{array}\right)$ from the identity 
2 by 2 matrix.  
Therefore, we wish to approximate the space $\R_r(G)/G$ by the best space possible 
(it turns out requiring only for it to be Hausdorff gives an algebraic variety).

A theorem of Nagata \cite{Na} says that if a reductive group acts on a finitely generated algebra $A$, then the subalgebra of invariants $A^G=\{a\in A\ |\ g\cdot a=a\}$ is likewise finitely generated.  This is one answer to Hilbert's fourteeth problem.

Since $\R_r(G)$ is an affine variety, its coordinate ring $\C[\R_r(G)]$ is a finitely generated reduced ring (no nilpotents), and since $G$ acts on $\R_r(G)$ it acts on $\C[\R_r(G)]$ by $(g,f(\rho))\mapsto f(g^{-1}\rho g)$.  Thus $\C[\R_r(G)]^{G}$ is a finitely generated reduced ring, and consequently we define $$\X_r(G)=\mathrm{Spec}_{max}\left(\C[\R_r(G)]^{G}\right),$$ the set of maximal ideals, to be the $G$-\emph{character variety of} $\F_r$.  It is a singular affine variety.

It can be shown that $\X_r(G)$ is the categorical quotient $\R_r(G)\aq G$ in the category of affine varieties (or Hausdorff spaces or complex analytic varieties \cite{Lu2,Lu3}).  We recall the definition of a categorical quotient to be concrete.

\begin{defn}
A {\it categorical quotient} of a variety $V_G$ with an algebraic group $G$ acting rationally is an object $V_G\aq G$ and a $G$-invariant morphism $\pi_G:V_G\to V_G\aq G$ such that the following commutative diagram exists 
uniquely for all invariant morphisms $f:V_G\to Z$: $$ \xymatrix{
V_G \ar[rr]^-{\pi} \ar[dr]_{f} & & V_G\aq G \ar@{.>}[dl]\\
& Z &} $$
It is a {\it good} categorical quotient if the following additionally hold:
\begin{enumerate}
\item[$(i)$] for open subsets $U\subset V_G\aq G$, $\C[U]\cong \C[\pi^{-1}(U)]^{G}$
\item[$(ii)$] $\pi$ maps closed invariant sets to closed sets
\item[$(iii)$] $\pi$ separates closed invariant sets.
\end{enumerate}
\end{defn}

When $G$ is reductive and $V_G$ is an affine $G$-variety, then $$V_G\to \mathrm{Spec}_{max}(\C[V_G]^G)$$ is a good categorical quotient.  See \cite{Do} for details.

The representations that are {\it stable} (points with closed orbits having zero dimensional isotropy) form a Zariski open set (dense if non-empty), denoted $\R_r^{s}(G)$.  The universal (surjective) morphism $\pi_{G}:\R_r(G)\to \X_r(G)$ restricts to a geometric quotient $\R_r^{s}(G)\to\R_r^{s}(G)/G\subset \X_r(G)$.

For instance, when $G=\SLm{n}$ these are exactly the irreducible representations (having no proper invariant subspace with respect to the action on $\C^n$).

The poly-stable representations, denoted by $\R_r^{ps}(G)$, are defined to be those representations with closed $G$-orbits. For instance, when $G=\SLm{n}$ these are exactly the completely reducible representations (decomposable into a direct sum of irreducible sub-representations with respect to the action on $\C^n$).

Any representation can be continuously and conjugate-invariantly deformed to one that is poly-stable.  Hence the points of $\X_r(G)$ are unions of orbits of representations that are deformable in this way.  Such a union, denoted hereafter by $[[\rho]]_G$ is called an {\it extended orbit equivalence class}.  Thus $\X_r(G)$ parametrizes extended orbit equivalences of representations.  $\R_r^{ps}(G)/G$ is in one-to-one correspondence with the space $\X_r(G)$ since every extended orbit $[[\rho]]_G\in \X_r(G)$ is a union of $G$-orbits, denoted by $[\rho]_G$, and the one of smallest dimension must be closed.  Two $G$-orbits in $\R_r(G)/G$ are identified in $\X_r(G)$ if and only if their closures intersect, and all orbits of elements in $\R_r^{s}(G)$ are homeomorphic.  


Any such reductive quotient has an affine lift (see \cite{MFK}).  In otherwords, there is an affine space $\mathbb{A}^N$ for some potentially large $N$ where $\R_r(G)\subset \mathbb{A}^N$ and where the action of $G$ extends.  Then $$\Pi:\C[\mathbb{A}^N]\longrightarrow \C[\R_r(G)]$$ and more importantly
$$\Pi_{G}:\C[\mathbb{A}^N\aq G]\longrightarrow \C[\R_r(G)\aq G]$$ are surjective morphisms.  

In fact, since $G$ is linear and hence given by $n\times n$ matrices for some $n$, we may take $\mathbb{A}^N=\mathfrak{gl}(n,\C)^{\times r}$ and the action of $G$ to be, as it is on $G^{\times r}$, diagonal conjugation.

The coordinate ring of this affine space is $$\C[\mathfrak{gl}(n,\C)^{\times r}]=\C[x^{k}_{ij}\ |\ 1\leq i,j \leq n, \ 1\leq k\leq r],$$ the complex polynomial ring in 
$rn^2$ variables.

Let $$\xb_k=\left(
\begin{array}{cccc}
x^k_{11} & x^k_{12} & \cdots & x^k_{1n}\\
x^k_{21} & x^k_{22}  &\cdots & x^k_{2n}\\
\vdots &\vdots & \ddots & \vdots\\
x^k_{n1} & x^k_{n2} &\cdots &x^k_{nn}\\
\end{array}\right)$$  be a \emph{generic matrix} of size $n\times n$.
In 1976 (see \cite{P1}) Procesi proves (in the above terms)

\begin{thm}[$1^{\text{st}}$ Fundamental Theorem of Invariants of $n\times n$ Matrices]\label{prfd1}
$$\C[\glm{n}^{\times r}\aq \SLm{n}]=\C[\tr{\xb_{i_1}\xb_{i_2}\cdots \xb_{i_l}}\ |\ 1\leq l \leq d(n)],$$ where $d(n)$ is a fixed positive integer dependent only on $n$. 
\end{thm}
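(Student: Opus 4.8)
The plan is to replace $\SLm{n}$ by $\GLm{n}$. Both act on $\glm{n}^{\times r}$ by simultaneous conjugation, and the action factors through $\mathrm{PGL}(n,\C)$; since $\SLm{n}\to\mathrm{PGL}(n,\C)$ is already surjective, the invariant rings agree, $\C[\glm{n}^{\times r}]^{\SLm{n}}=\C[\glm{n}^{\times r}]^{\GLm{n}}$. Each $\tr{\xb_{i_1}\cdots\xb_{i_l}}$ lies in $\C[\glm{n}^{\times r}]$ (its entries are polynomials in the $x^k_{ij}$) and is conjugation invariant, so what remains is to show these traces generate the invariant ring, with words of bounded length.

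Next I would pass to the multilinear case by polarization. An invariant is a sum of its multihomogeneous pieces, so assume $P$ is homogeneous of multidegree $(d_1,\dots,d_r)$; its full polarization $\widetilde{P}$ is multilinear in $e=d_1+\cdots+d_r$ matrix arguments, still conjugation invariant, and (in characteristic $0$) $P$ is recovered by restitution, setting the copies of each $\xb_i$ equal. Hence it suffices to show every multilinear conjugation invariant of $\glm{n}^{\times m}$ is a linear combination of products $\prod_{\alpha}\tr{w_\alpha}$, where the $w_\alpha$ are monomials in the $m$ variables jointly using each variable once; restitution then expresses $P$ as a polynomial in traces of words of degree $\le e$, and summing over pieces gives $\C[\glm{n}^{\times r}]^{\GLm{n}}=\C[\,\tr{\xb_{i_1}\cdots\xb_{i_l}}\ |\ l\ge 1\,]$.

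The heart of the matter is this multilinear statement, which is the first fundamental theorem of classical invariant theory for $\GLm{n}$ in the form of Schur--Weyl duality. Identifying $\glm{n}\cong\glm{n}^{*}$ via the trace form as $\GLm{n}$-modules, a multilinear invariant of $\glm{n}^{\times m}$ corresponds to a $\GLm{n}$-invariant element of $\End(\C^n)^{\otimes m}=\End\!\left((\C^n)^{\otimes m}\right)$, and by the double commutant theorem this invariant algebra is spanned by the operators $P_\sigma$ ($\sigma\in S_m$) permuting the tensor factors. A direct computation identifies the invariant attached to $P_\sigma$ with $(\xb_1,\dots,\xb_m)\mapsto\prod_{c}\tr{\xb_{i_1}\cdots\xb_{i_l}}$, the product over the cycles $c=(i_1\,i_2\cdots i_l)$ of $\sigma$ --- exactly the description required.

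It remains to bound the lengths. Since $\C[\glm{n}^{\times r}]^{\GLm{n}}$ is finitely generated (Nagata, \cite{Na}) and $\mathbb{N}$-graded by total degree with degree-zero part $\C$, it is generated by homogeneous elements of some bounded degree, and combined with the previous paragraph this already forces finitely many traces of words to generate; an $r$-independent and effective bound $d(n)\le n^{2}$ comes instead from the Cayley--Hamilton theorem, which writes $A^{n}$ in terms of $A,\dots,A^{n-1},I$ with coefficients that are (by Newton's identities) polynomials in $\tr{A},\dots,\tr{A^{n}}$, and whose multilinearization --- valid for all $n\times n$ matrices regardless of $r$ --- shortens any word of length exceeding $n^{2}$ inside a trace modulo traces of shorter words. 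The main obstacle is the third step: exhibiting \emph{every} invariant, not just the obvious ones, as a polynomial in traces is precisely where the classical first fundamental theorem must be invoked or reproved; the secondary subtlety is the uniformity in $r$ of the bound $d(n)$, which rests on these Cayley--Hamilton trace identities.
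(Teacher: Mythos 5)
The paper itself offers no proof of this statement: it is quoted as Procesi's theorem with a citation to [P1], so the only comparison available is with the classical argument, which your outline essentially follows. Your first three steps are sound and are exactly the standard proof: the reduction from $\SLm{n}$ to $\GLm{n}$ through $\mathrm{PGL}(n,\C)$, the reduction to multilinear invariants by polarization and restitution in characteristic zero, and the identification, via the trace form and the double commutant theorem, of the multilinear invariants of $\glm{n}^{\times m}$ with the span of the permutation operators, whose associated invariants are the cycle products $\prod_{c}\tr{\xb_{i_1}\cdots \xb_{i_l}}$. This establishes that traces of words generate the invariant ring, with word length bounded by the degree of each multihomogeneous piece.

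The genuine gap is in the final step, the existence of a bound $d(n)$ independent of $r$. Nagata's theorem only yields, for each fixed $r$, generation in some degree $d(n,r)$, and the mechanism you describe for the uniform bound does not work as stated: the multilinearized Cayley--Hamilton relation (the fundamental trace identity) is an alternating sum over all of $S_{n+1}$, whose ``long'' part is a signed sum over all $n!$ full cycles of traces of length-$(n+1)$ words, so it does not by itself rewrite a single trace of a long word in terms of shorter ones. The assertion that every trace of a word of length exceeding $n^{2}$ reduces modulo traces of shorter words is precisely Razmyslov's theorem, equivalently the bound $n^{2}$ on the Nagata--Higman degree of nilpotency --- which is exactly why the paper calls $d(n)$ the degree of nilpotency --- and it requires a substantial argument; the reduction Procesi originally extracted from Cayley--Hamilton gives only $2^{n}-1$. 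To close the gap, either invoke Nagata--Higman/Razmyslov explicitly, or, since the statement only asks for some $d(n)$ depending on $n$ alone, finish more cheaply with Weyl's polarization theorem: for $r\ge n^{2}=\dim\glm{n}$ the invariants of $\glm{n}^{\times r}$ are obtained by polarizing invariants of $\glm{n}^{\times n^{2}}$, and since polarization of a trace of a word produces traces of words of the same length, the bound furnished by Nagata's theorem for $r=n^{2}$ (together with your step three applied there) already serves for all $r$.
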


The number $d(n)$ is called the \emph{degree of nilpotency}.  The only values known are $d(2)=3$, $d(3)=6$ and $d(4)=10$.  

Therefore, $\C[\R_r(\SLm{n}\aq \SLm{n}]$ is generated by $$\Pi(\tr{\xb_{i_1}\xb_{i_2}\cdots \xb_{i_l}})=\tr{\widehat{\xb_{i_1}}\widehat{\xb_{i_2}}\cdots \widehat{\xb_{i_l}}}$$ where $\widehat{\xb_k}=(\widehat{x^k_{ij}})$ and $\widehat{x_{ij}^k}=\Pi(x^k_{ij})\in \C[\R_r(\SLm{n}].$  We will return to this case latter in the paper.

\begin{rem}
$\X_r(G)$ is not necessarily, what we call the Culler-Shalen Character Variety, here denoted $\mathcal{CV}_r(G)$ $($see \cite{CS}$)$.  By definition $\mathcal{CV}$ is the space which corresponds to the algebra generated by traces of representations.  \cite{P1} essentially shows $\mathcal{CV}_r(G)=\X_r(G)$ for $G=\SLm{n}$or $\GLm{n}$.  A recent preprint of Sikora \cite{Si3} notes that this can be generalized to any finitely generated group $\Gamma_g$ and additionally $G$ equal to 
$\mathrm{O}(n,\C)$ or $\mathrm{Sp}(n,\C)$; that is, the coordinate ring of $\C[\mathrm{Hom}(\Gamma_g,G)\aq G]$ is generated by traces of representations in these cases.  However, this fact is not always true.  For $G=\mathrm{SO}(n,\C)$, the coordinate rings may be different from the trace algebras, and hence the quotients will differ as well.  For instance $\mathrm{SO}(2,\C)$-representations of a rank 2 free provide a concrete instance of this phenomenon.
\end{rem}

\begin{rem}
Let $G_\mathbb{R}$ be the real points of the complex variety $G$.  Although the categorical quotient mapping $\pi_G:V_G\to V_G\aq G$ is always surjective, the corresponding mapping $\pi_{G_{\mathbb{R}}}$ need not be surjective.  For instance, the quotient mapping $\SUm{2}\to \mathrm{SU}(2)/\SUm{2}$ (for the conjugation action) is a mapping onto the interval $[-2,2]$, but the corresponding space $\SUm{2}\aq \SUm{2}=\mathbb{R}$, and so the projection mapping $\SUm{2}\to \mathrm{SU}(2)\aq\SUm{2}$ is very far from surjective.
\end{rem}

\subsubsection{The real quotient $\X_r(K)$}

Let $K$ be a compact Lie group.  There is a related space $\X_r(K)=\mathrm{Hom}(\F_r,K)/K$, called the $K$-\emph{character space of} $\F_r$.  This space is always Hausdorff since all orbits of compact groups are closed.  It is a compact space (path-connected if $K$ is path-connected) since these properties are inherited by compact quotients and Cartesian products and the space $\R_r(K)\cong K^{\times r}$.

More generally, let $S$ be a real affine algebraic $K$-variety.  Then there is an equivariant closed embedding $S\hookrightarrow W$, where $W$ is a real representation of $K$.  Let $\mathbb{R}[W]^K$ be the ring of $K$-invariants in the real coordinate ring $\mathbb{R}[W]$.  The invariant ring is finitely generated by polynomials $p_1,...,p_d$ and the corresponding mapping $$p=(p_1,...,p_d):S\to \mathbb{R}^d$$ is proper and induces a homeomorphism $S/K\cong p(S)$.  This image is generally a semi-algebraic set; that is, a finite union of sets each determined by a finite number of polynomial inequalities.  There is an ideal $I$ so that $\mathbb{R}[S]^K\cong \mathbb{R}[p_1,...,p_d]/I$.  Let $Z_{\mathbb{R}}(I)$ be the real zeros of the generators of $I$ as a subset of $\mathbb{R}^d$.  Then one can show that $S/K$ is a closed semi-algebraic subset of $Z_{\mathbb{R}}(I)$.  See \cite{Sch1} and \cite{PS}.

Since $K\subset G$ implies $\R_r(K)\subset \R_r(G)$ and reductivity implies $\C[\R_r(G)]^G=\C[\R_r(G)]^K$, it follows that the real and imaginary parts of a set of generators for $\C[\R_r(G)]^G$ will give a set of generators for $\mathbb{R}[\R_r(K)]^K$.  For instance, $\mathbb{R}[\R_r(\SUm{n})]^{\SUm{n}}$ is generated by the real and imaginary parts of trace functions of the form $\tr{\widehat{\xb_{i_1}}\widehat{\xb_{i_2}}\cdots \widehat{\xb_{i_l}}}$.  Thus $\R_r(K)/K$ is generally a semi-algebraic set that is given as the real points of the variety $\R_r(G)\aq G$ subject to a finite number of polynomial inequalities.

\subsubsection{A brief word about topologies}\label{topologies}

We can topologize real semialgebraic sets as subspaces of Euclidean space (metric topology), just as we can topologize complex affine varieties as subspaces of complex affine space (again called the metric topology).  These real sets are topologically well behaved. For instance, the local structure around a nonisolated point is always isomorphic to a cone (see \cite{BCR} 9.3.6).  In particular, they are locally contractible (just like complex affine varieties).  We also know that compact semialgebraic sets may be triangulated (see \cite{BCR} 9.4.1), and so they are finite simplicial complexes (again complex affine varieties are triangulable as well).

Both $G^{\times r}$ and $K^{\times r}$ can be given the product topology
where $K\subset G$ is given the subspace topology.  The product topology is equivalent
to the compact-open topology of the representation spaces $\hm(\F_{r},G)\cong G^{\times r}$
and $\hm(\F_{r},K)\cong K^{\times r}$; in other words, the evaluation
map not only gives a bijection but a homeomorphism as well. This is easy to see since $\F_{r}$ is given the discrete topology and $G$ is a metric space and so the compact-open topology is equivalent to the
compact-convergence topology with respect to evaluations of words
in $\F_{r}$. This shows the compact-open topology is the product
topology since limits are determined only by convergence on generators.  

For a topological space $M$ with a $K$ action (a $K$-space), the orbit space $M/K$ may be given the usual quotient topology. $K$ not only acts on $K^{\times r}$ by simultaneous conjugation
in each factor, but also on $G^{\times r}$ since $G$ contains $K$.  In this way both $\R_r(K)/K$ and $\R_r(G)/K$ can have the usual $K$ quotient topology which is compatible with the metric topology coming from the semi-algebraic structures. Therefore, all such topologies are equivalent to the metric topologies.

Moreover, since $G^{\times r}$ is a variety, these other topologies contain the Zariski topology as a much coarser sub-topology.  In fact they contain the less coarse \'etale topology, and the even finer strong (complex analytic) topology (see \cite{Mu}).  Note we will generally use the symbol ``$\cong$'' to mean isomorphism in the category of topological spaces, and where we have considered more than one compatible topology we use the strongest one.

\section{The Equivariant Retraction $G\to K$}

Let $K$ be a compact Lie group and $G=K_{\C}$.  Denote the Lie algebras of $G$ and $K$ by $\mathfrak{g}$ and $\mathfrak{k}$, respectively. Then, letting $\mathfrak{g}=\mathfrak{k}\oplus\mathfrak{p}$
be a Cartan decomposition, the multiplication map\[
m:K\times\exp\mathfrak{p}\to G\]
 is a surjective diffeomorphism (see \cite{K}, page 384). The main example
for our purposes will be the case when $G=\SLm{n}$ and $K=\SUm{n}$,
for which the diffeomorphism above is the usual polar decomposition.
In general, the inverse to $m$ can be defined explicitly by\begin{eqnarray}
m^{-1}:G & \to & K\times\exp\mathfrak{p}\nonumber \\
g & \mapsto & \left(g(g^{*}g)^{-\frac{1}{2}},(g^{*}g)^{\frac{1}{2}}\right),\label{eq:m-1}\end{eqnarray}
where $g^{*}$ denotes the Cartan involution applied to $g$ (which, in the case of $\SLm{n}$ and $\SUm{n}$, is the usual conjugate
transpose map). The formula above follows from the fact that, if we
write $g=ke^{p}$, for $k\in K$ and $p\in\mathfrak{p}$, then $g^{*}=e^{p^{*}}k^{*}=e^{p}k^{*}$,
which implies $g^{*}g=e^{2p}$ (note that the Cartan involution fixes
any element of $\mathfrak{p}$).

Now, for any $t\in[0,1]\subset\mathbb{R}$, consider the map $\phi_{t}:G \to  G$ defined by
$$
g=ke^{p} \mapsto  g(g^{*}g)^{-\frac{t}{2}}=ke^{p}(e^{2p})^{-\frac{t}{2}}=ke^{(1-t)p}.
$$
It is clear that $\phi_t$ are continuous maps.  Let us denote the conjugation action of $K$ on $G$ by\[
k\cdot g=kgk^{-1},\quad\mbox{for }k\in K,g\in G.\]
 Then, we have

\begin{prop}\label{GKdeformation}
The collection $\left\{ \phi_{t}\right\} _{t\in[0,1]}$ above defines
a strong deformation retraction from $G$ to $K$, given by $r(g)=\phi_{1}(g)=g(g^{*}g)^{-\frac{1}{2}}$.
Moreover, for any $t\in[0,1]$, $\phi_{t}$ is $K$-equivariant in
the sense that $\phi_{t}(k\cdot g)=k\cdot\phi_{t}(g)$. 
\end{prop}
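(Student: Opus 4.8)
The plan is to verify directly that the explicitly given family $\{\phi_t\}_{t\in[0,1]}$ has all the required properties, treating the three assertions (endpoints/homotopy, retraction onto $K$, and $K$-equivariance) in turn. First I would check the endpoint conditions: since every $g\in G$ can be written uniquely as $g=ke^p$ with $k\in K$, $p\in\mathfrak p$ (by the surjective diffeomorphism $m$ and the formula \eqref{eq:m-1}), the formula $\phi_t(g)=ke^{(1-t)p}$ shows $\phi_0(g)=ke^p=g$, so $\phi_0=\mathrm{id}_G$, and $\phi_1(g)=ke^0=k=g(g^*g)^{-1/2}$, which lands in $K$; this identifies the retraction $r=\phi_1$. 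I would also note $\phi_t$ is well-defined because the decomposition $g=ke^p$ is unique, and that $(t,g)\mapsto\phi_t(g)$ is continuous since it is built from the continuous maps $g\mapsto g^*g$, the (continuous, indeed smooth) functional calculus $A\mapsto A^{s}$ on positive self-adjoint elements, and multiplication in $G$ — the continuity of $\phi_t$ in $g$ is already asserted in the excerpt, and joint continuity in $(t,g)$ follows the same way.

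Next I would check the \emph{strong} deformation retraction condition, namely that $\phi_t$ restricts to the identity on $K$ for all $t$: if $g=k\in K$, then in the decomposition $g=ke^p$ we have $p=0$, so $\phi_t(k)=ke^{(1-t)\cdot 0}=k$ for every $t$. Combined with $\phi_0=\mathrm{id}$, $\phi_1(G)\subset K$, and continuity, this is exactly the definition of a strong deformation retraction of $G$ onto $K$.

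For $K$-equivariance, the key point is that the decomposition transforms correctly under conjugation. If $g=ke^p$ with $p\in\mathfrak p$, then for $h\in K$ we have $h\cdot g = hgh^{-1} = (hkh^{-1})(he^p h^{-1}) = (hkh^{-1})\exp(\mathrm{Ad}(h)p)$. Since $K$ preserves the Cartan decomposition $\mathfrak g=\mathfrak k\oplus\mathfrak p$ under the adjoint action, $\mathrm{Ad}(h)p\in\mathfrak p$, so this is precisely the $K\times\exp\mathfrak p$ decomposition of $h\cdot g$, with $K$-part $hkh^{-1}$ and $\mathfrak p$-part $\mathrm{Ad}(h)p$. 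Therefore $\phi_t(h\cdot g) = (hkh^{-1})\exp((1-t)\mathrm{Ad}(h)p) = h\bigl(ke^{(1-t)p}\bigr)h^{-1} = h\cdot\phi_t(g)$, using $\exp\circ\mathrm{Ad}(h)=\mathrm{conj}_h\circ\exp$ again in the last-but-one step.

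The only genuinely delicate input is the uniqueness and smoothness of the polar-type decomposition $m:K\times\exp\mathfrak p\to G$ and the fact that $\mathrm{Ad}(K)$ preserves $\mathfrak p$; both are standard Cartan-decomposition facts (cited from \cite{K}), so I expect no real obstacle. If anything, the mild subtlety is making sure the functional-calculus expression $(g^*g)^{-t/2}$ used in the other form of $\phi_t$ agrees with $e^{-tp}$ — this follows from $g^*g=e^{2p}$, which the excerpt already establishes — and that everything is continuous in $t$ and $g$ simultaneously so that $\{\phi_t\}$ is an honest homotopy rather than just a pointwise family.
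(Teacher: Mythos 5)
Your proof is correct, and it covers everything the statement requires: the endpoint conditions, the pointwise identity on $K$ for all $t$ (the ``strong'' part), well-definedness, and equivariance; you even address joint continuity in $(t,g)$, which the paper passes over with ``it is clear that $\phi_t$ are continuous maps.'' The one place where your route genuinely differs from the paper's is the $K$-equivariance step. The paper works directly with the closed formula $\phi_t(g)=g(g^*g)^{-t/2}$: it uses $(hgh^{-1})^*=hg^*h^{-1}$ for $h\in K$ to get $(hgh^{-1})^*(hgh^{-1})=he^{2p}h^{-1}$, and then invokes the fractional-power identity $he^{tp}h^{-1}=\left(he^{p}h^{-1}\right)^{t}$, proved first for rational $t$ and extended by continuity. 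You instead argue through the uniqueness of the decomposition $m:K\times\exp\mathfrak p\to G$: conjugation by $h\in K$ sends $g=ke^{p}$ to $(hkh^{-1})\exp(\mathrm{Ad}(h)p)$, and since $\mathrm{Ad}(K)$ preserves $\mathfrak p$ this \emph{is} the decomposition of $h\cdot g$, so $\phi_t$ commutes with conjugation by inspection. Your version avoids the rational-approximation argument for non-integer powers entirely, at the cost of invoking the (standard) $\mathrm{Ad}(K)$-invariance of $\mathfrak p$ and the uniqueness of the polar-type decomposition; the paper's version stays with the explicit matrix formula, which has the minor advantage of being the form of $\phi_t$ actually manipulated later in the paper (e.g.\ in the computation showing the retraction commutes with the transpose involution). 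Both arguments are complete and correct, so this is a matter of packaging rather than substance.
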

\begin{proof}
Clearly $\phi_{0}(g)=g$ and $\phi_{1}(g)\in K$ for all $g\in G$,
by (\ref{eq:m-1}). Also, we have $\phi_{t}(k)=k$ for all $k\in K$,
because in this case $k^{*}k$ is the identity. So, $\phi$ is indeed
a strong deformation retraction. To prove $K$-equivariance we first
note that, for any real number $t\geq0$, we have\begin{equation}
he^{tp}h^{-1}=\left(he^{p}h^{-1}\right)^{t}\label{eq:exp}\end{equation}
 for all $h\in K$ and $p\in\mathfrak{p}$. The formula certainly
works for rational $t$, and the general case follows by continuity.
Using this, we compute \begin{eqnarray*}
\phi_{t}(h\cdot g) & = & \phi_{t}(hgh^{-1})\\
 & = & hgh^{-1}\left((hgh^{-1})^{*}hgh^{-1}\right)^{-\frac{t}{2}}\\
 & = & hgh^{-1}\left(hg^{*}gh^{-1}\right)^{-\frac{t}{2}}\\
 & = & hgh^{-1}\left(he^{2p}h^{-1}\right)^{-\frac{t}{2}}\\
 & = & hgh^{-1}he^{-tp}h^{-1}\\
 & = & h\phi_{t}(g)h^{-1},\end{eqnarray*}
 for all $h\in K$ and $g=ke^{p}\in G$. 
\end{proof}

Now suppose that $T$ is any topological space, $S\subset T$ is a subspace, and a group $K$ acts on $T$ so the restriction to $S$ is stable (for all $k\in K$ and $s\in S$, $k\cdot s\in S$).  Then under these assumptions, it is elementary to prove

\begin{prop}\label{Kequivariantdeformation}
If there exists a $K$-equivariant (strong) deformation retraction of $T$ onto $S$, then for all integers $r>0$ there exists a (strong) deformation retraction of $T^{\times r}/K$ onto $S^{\times r}/K$.
\end{prop}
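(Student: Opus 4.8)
The plan is to transfer the given $K$-equivariant homotopy on $T$ to the quotient $T^{\times r}/K$ by first promoting it to the product $T^{\times r}$ and then checking it descends to the quotient. Let $H:T\times[0,1]\to T$ denote the $K$-equivariant deformation retraction of $T$ onto $S$, so $H(\cdot,0)=\mathrm{id}_T$, $H(T,1)\subseteq S$, and $H(k\cdot x,t)=k\cdot H(x,t)$ for all $k\in K$, $x\in T$, $t\in[0,1]$. First I would define the diagonal homotopy $\widetilde{H}:T^{\times r}\times[0,1]\to T^{\times r}$ by $\widetilde{H}((x_1,\dots,x_r),t)=(H(x_1,t),\dots,H(x_r,t))$. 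This is continuous (as a product of continuous maps in each coordinate), satisfies $\widetilde H(\cdot,0)=\mathrm{id}$, and $\widetilde H(T^{\times r},1)\subseteq S^{\times r}$ since each coordinate lands in $S$. Moreover $\widetilde H$ is $K$-equivariant for the diagonal conjugation action because $H$ is equivariant in each factor: $\widetilde H(k\cdot(x_1,\dots,x_r),t)=(H(k\cdot x_1,t),\dots)=(k\cdot H(x_1,t),\dots)=k\cdot\widetilde H((x_1,\dots,x_r),t)$. Note that the hypothesis ``the restriction of the $K$-action to $S$ is stable'' guarantees $S^{\times r}$ is a genuine $K$-invariant subspace, so that $S^{\times r}/K$ sits inside $T^{\times r}/K$ as a subspace.

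The key remaining step is to push $\widetilde H$ down through the quotient map $q:T^{\times r}\to T^{\times r}/K$. I would consider the composite $q\circ\widetilde H: T^{\times r}\times[0,1]\to T^{\times r}/K$ and argue it factors through $q\times\mathrm{id}_{[0,1]}: T^{\times r}\times[0,1]\to (T^{\times r}/K)\times[0,1]$. Two points $\mathbf{x},\mathbf{y}\in T^{\times r}$ with $q(\mathbf{x})=q(\mathbf{y})$ satisfy $\mathbf{y}=k\cdot\mathbf{x}$ for some $k\in K$, hence $\widetilde H(\mathbf{y},t)=k\cdot\widetilde H(\mathbf{x},t)$ by equivariance, so $q\widetilde H(\mathbf{y},t)=q\widetilde H(\mathbf{x},t)$; thus there is a well-defined set map $\overline H:(T^{\times r}/K)\times[0,1]\to T^{\times r}/K$ with $\overline H\circ(q\times\mathrm{id})=q\circ\widetilde H$. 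Continuity of $\overline H$ follows because $q\times\mathrm{id}_{[0,1]}$ is a quotient map — a quotient map times the identity on a locally compact (indeed compact) space $[0,1]$ is again a quotient map (Whitehead's theorem) — so a map out of $(T^{\times r}/K)\times[0,1]$ is continuous iff its precomposition with $q\times\mathrm{id}$ is, and $q\circ\widetilde H$ is continuous.

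Finally I would verify $\overline H$ has the required properties. We have $\overline H(q(\mathbf{x}),0)=q\widetilde H(\mathbf{x},0)=q(\mathbf{x})$, so $\overline H(\cdot,0)=\mathrm{id}_{T^{\times r}/K}$; and $\overline H(q(\mathbf{x}),1)=q\widetilde H(\mathbf{x},1)\in q(S^{\times r})=S^{\times r}/K$, so the time-$1$ map lands in the subspace $S^{\times r}/K$. This already gives a deformation retraction of $T^{\times r}/K$ onto $S^{\times r}/K$. For the ``strong'' version, one additionally assumes $H(x,t)=x$ for all $x\in S$, $t\in[0,1]$; then for $\mathbf{s}\in S^{\times r}$ we get $\widetilde H(\mathbf{s},t)=\mathbf{s}$, hence $\overline H(q(\mathbf{s}),t)=q(\mathbf{s})$ for every point $q(\mathbf{s})\in S^{\times r}/K$ and every $t$, so the retraction is strong. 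The only point requiring genuine care — and the one I would flag as the main (mild) obstacle — is the continuity of $\overline H$, which rests on knowing that $q\times\mathrm{id}_{[0,1]}$ remains a quotient map; this is where the local compactness of $[0,1]$ is used, and it is the reason the statement is ``elementary'' rather than entirely formal.
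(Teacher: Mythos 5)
Your proof is correct, and it is exactly the elementary argument the paper has in mind (the paper states this proposition without proof, calling it elementary): lift $H$ diagonally to $T^{\times r}$, use equivariance to descend through the quotient, and handle continuity via the fact that $q\times\mathrm{id}_{[0,1]}$ is still a quotient map. As a minor remark, you could avoid invoking Whitehead's theorem by noting that the orbit map $q$ of any group action is open (the saturation of an open set is a union of translates), and an open surjection times the identity is again an open surjection, hence a quotient map.
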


Putting these propositions together establishes

\begin{cor}\label{defretract}
$\X_r(K)$ is a strong deformation retraction of $\R_r(G)/K$.
\end{cor}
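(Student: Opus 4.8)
The plan is to obtain the corollary directly by combining Propositions \ref{GKdeformation} and \ref{Kequivariantdeformation}, applied to the pair $(T,S)=(G,K)$. First I would note that $K$ acts on $G$ by conjugation, $k\cdot g=kgk^{-1}$, and that this action stabilizes the subspace $K\subset G$ since $kKk^{-1}=K$; thus the set-up preceding Proposition \ref{Kequivariantdeformation} applies with $T=G$, $S=K$, once a $K$-equivariant strong deformation retraction of $G$ onto $K$ is supplied. That is exactly Proposition \ref{GKdeformation}: the family $\{\phi_t\}_{t\in[0,1]}$ satisfies $\phi_0=\mathrm{id}_G$, $\phi_1(G)\subset K$, $\phi_t|_K=\mathrm{id}_K$, and $\phi_t(k\cdot g)=k\cdot\phi_t(g)$.

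Next I would invoke Proposition \ref{Kequivariantdeformation} with these data to conclude that, for every integer $r>0$, there is a strong deformation retraction of $G^{\times r}/K$ onto $K^{\times r}/K$, where $K$ acts on both $G^{\times r}$ and $K^{\times r}$ by simultaneous (diagonal) conjugation. Explicitly this retraction is induced by the factorwise map $(g_1,\dots,g_r)\mapsto(\phi_t(g_1),\dots,\phi_t(g_r))$, which passes to the quotient precisely because each $\phi_t$ is $K$-equivariant; continuity of the induced map on $G^{\times r}/K$ is immediate from the universal property of the quotient topology together with continuity of the $\phi_t$.

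Finally I would translate this back into character-space language. The evaluation maps identify $\R_r(G)\cong G^{\times r}$ and $\R_r(K)\cong K^{\times r}$, and, as recorded in \S\ref{topologies}, these are homeomorphisms intertwining the respective conjugation actions of $K$; passing to orbit spaces gives $\R_r(G)/K\cong G^{\times r}/K$ and $\X_r(K)=\R_r(K)/K\cong K^{\times r}/K$ as topological spaces. Transporting the strong deformation retraction of $G^{\times r}/K$ onto $K^{\times r}/K$ across these homeomorphisms produces a strong deformation retraction of $\R_r(G)/K$ onto $\X_r(K)$, which is the assertion.

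I do not expect any real obstacle: the corollary is a bookkeeping consequence of the two propositions. The only point that warrants a moment's care is that the several topologies one could place on $\R_r(G)/K$ --- the quotient of the product topology, the quotient of the compact-open topology on $\hm(\F_r,G)$, and the metric topology from its semi-algebraic structure --- all coincide, so that the phrase ``strong deformation retraction'' in the conclusion is unambiguous; this coincidence was already established in \S\ref{topologies}.
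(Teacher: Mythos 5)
Your proposal is correct and follows exactly the paper's own argument: identify $\R_r(G)/K\cong G^{\times r}/K$ and $\X_r(K)\cong K^{\times r}/K$, feed the $K$-equivariant strong deformation retraction $\{\phi_t\}$ of Proposition \ref{GKdeformation} into Proposition \ref{Kequivariantdeformation} with the diagonal conjugation action, and transport the result across the identifications. The extra remarks on continuity and the coincidence of topologies are fine but add nothing beyond what the paper already records in \S\ref{topologies}.
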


\begin{proof}
We first recall that $\X_r(K)\cong K^{\times r}/K$ and $\R_r(G)/K\cong G^{\times r}/K$.  Let $\phi_t$ be the deformation of $G$ to $K$ from Propositions \ref{GKdeformation}.   Then with respect to this mapping and the diagonal conjugation action, Proposition \ref{Kequivariantdeformation} is applicable, and so establishes the corollary.
\end{proof}

\section{Kempf-Ness Sets and Homotopy Equivalence of Character Spaces}

In this section we discuss results of \cite{KN}, \cite{Sch1}, and \cite{Ne} that are relevant to our situation.  

Let $G$ be complex affine reductive group, $V_G$ be an affine $G$-variety, $V_G\aq G=\mathrm{Spec}_{max}\left(\C[V_G]^G\right)$.  For us $V_G=\R_r(G)$ and the action is by diagonal conjugation.  However, we proceed more generally.

We may assume $V_G$ is equivariantly embedded as a closed subvariety of a representation $G\to \mathrm{GL}(V)$.  Let $\langle\ ,\ \rangle$ be a $K$-invariant Hermitian form on $V$ with norm denoted by $\norm{\ }$.

Define for any $v\in V$ the mapping $p_v:G\to \mathbb{R}$ by $g\mapsto \norm{g\cdot v}^2$.  It is shown in \cite{KN} that any critical point of $p_v$ is a point where $p_v$ attains its minimum value.  Moreover, the orbit $G\cdot v$ is closed and $v \not= 0$ if and only if $p_v$ attains a minimum value.  

Define $\mathcal{KN}\subset V_G\subset V$ to be the set of critical points $\{v\in V_G\subset V\ |\ (dp_v)_{\id}=0\}$, where $\id\in G$ is the identity.  This set is called the \emph{Kempf-Ness} set of $V_G$. 
Since the Hermitian norm is $K$-invariant, for any point in $\mathcal{KN}$, its entire $K$-orbit is also contained in $\mathcal{KN}$.
The following theorem is proved in \cite{Sch1} making reference to \cite{Ne}.

\begin{thm}[Schwarz-Neeman]\label{schwarzneeeman}
The composition $\mathcal{KN}\to V_G\to V_G\aq G$ is proper and induces a homeomorphism $\mathcal{KN}/K\to V_G\aq G$ where $V_G\aq G$ has the subspace topology induced from its equivariant affine embedding.  Moreover, there is a $K$-equivariant deformation retraction of $V_G$ to $\mathcal{KN}.$
\end{thm}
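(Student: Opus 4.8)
The statement to be proved is the Schwarz--Neeman theorem, asserting that $\mathcal{KN}/K \to V_G\aq G$ is a homeomorphism (via a proper map) and that $V_G$ admits a $K$-equivariant deformation retraction onto $\mathcal{KN}$. My plan is to reduce everything to the foundational Kempf--Ness analysis of the function $p_v(g) = \norm{g\cdot v}^2$ already recalled in the excerpt, together with properties of the affine quotient $\pi_G\colon V_G \to V_G\aq G$. The key facts to combine are: (i) $p_v$ has a critical point iff $G\cdot v$ is closed, and every critical point is a global minimum; (ii) the critical set along an orbit $G\cdot v$, when $G\cdot v$ is closed, is exactly a single $K$-orbit $K\cdot v_0$ (the minimal-norm vectors), by the convexity of $p_v$ along the geodesics $t\mapsto \exp(it\xi)\cdot v$ for $\xi\in\mathfrak{k}$ acting via the representation; (iii) $\pi_G$ separates closed orbits and every fiber $\pi_G^{-1}(x)$ contains a unique closed orbit.

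The map on quotients: Since $\mathcal{KN}$ is $K$-stable, the composite $\mathcal{KN}\hookrightarrow V_G \xrightarrow{\pi_G} V_G\aq G$ descends to a continuous map $\bar\iota\colon \mathcal{KN}/K \to V_G\aq G$. For surjectivity, pick $x\in V_G\aq G$; its fiber contains a closed orbit $G\cdot v$, which by (i) meets $\mathcal{KN}$, so $x\in\mathrm{im}(\bar\iota)$. For injectivity, if $v,w\in\mathcal{KN}$ have $\pi_G(v)=\pi_G(w)$, then by (i) both $G\cdot v$ and $G\cdot w$ are closed, hence equal (as the fiber has a unique closed orbit), and then (ii) forces $w\in K\cdot v$. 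So $\bar\iota$ is a continuous bijection. To upgrade to a homeomorphism I would prove properness of $\mathcal{KN}\to V_G\aq G$: a compact $C\subset V_G\aq G$ has $\pi_G^{-1}(C)$ closed, and within it the norm $\norm{v}$ is bounded on $\mathcal{KN}$ because on $\mathcal{KN}$ the norm equals the minimum of $p_v$ over the orbit, which is a continuous (indeed, one shows proper) function of $\pi_G(v)$; hence $\mathcal{KN}\cap\pi_G^{-1}(C)$ is closed and bounded in $V$, so compact. Properness of a continuous bijection to a locally compact Hausdorff space gives a homeomorphism, and it descends to $\mathcal{KN}/K$.

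The deformation retraction: Define, for $v\in V_G$, the negative-gradient (or ``moment map'') flow of $p_v$ restricted to the orbit directions coming from $\exp(i\mathfrak{k})$; concretely, follow the path in $G$ that decreases $\norm{g\cdot v}^2$ along the $\mathfrak{p}$-directions, reparametrized to run over $t\in[0,1]$ and to limit onto the critical set. Because $p_v$ is strictly convex transverse to its critical manifold along these geodesics and achieves its min on $\mathcal{KN}$, this flow is well-defined, continuous in $(t,v)$, fixes $\mathcal{KN}$ pointwise, and lands in $\mathcal{KN}$ at $t=1$; $K$-equivariance follows exactly as in Proposition~\ref{GKdeformation}, from $K$-invariance of $\norm{\ }$. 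One must check the flow stays in $V_G$ (it does, since each flow line stays in a single $G$-orbit, which lies in the $G$-stable set $V_G$) and that it extends continuously to the closure issues at $t=1$.

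The main obstacle is precisely this last point: constructing the retraction honestly requires either the analytic gradient-flow estimates of Neeman/Schwarz (convergence of the flow and joint continuity up to $t=1$, including at points whose orbits are not closed, where the flow converges to a boundary orbit) or an appeal to the Kempf--Ness ``optimal destabilizing'' structure. Since this is quoted as a known theorem from \cite{Sch1} and \cite{Ne}, I would cite those references for the construction and convergence of the flow and confine original argument to the quotient-homeomorphism part, which is the piece we actually re-derive here; a full self-contained proof of the retraction would reproduce the Morse-theoretic analysis of $\norm{\ }^2$ on $G$-orbits, which is beyond what we need.
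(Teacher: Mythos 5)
The paper offers no proof of this statement to compare against: Theorem \ref{schwarzneeeman} is quoted as a known result, with the proof attributed entirely to \cite{Sch1}, which in turn builds on \cite{Ne}. Your proposal is therefore compatible with, and somewhat more detailed than, the paper's treatment. The half you do argue -- that $\mathcal{KN}/K\to V_G\aq G$ is a continuous bijection -- is the standard Kempf--Ness argument (critical points of $p_v$ are global minima and have closed $G$-orbits, each closed orbit meets the minimal vectors in exactly one $K$-orbit, and each fiber of $\pi_G$ contains a unique closed orbit), and it is correct as stated; deferring the $K$-equivariant retraction to \cite{Sch1,Ne} matches what the paper itself does. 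The one step to be careful about is properness: your assertion that $\norm{v}$ on $\mathcal{KN}$ is controlled by a continuous (indeed proper) function of $\pi_G(v)$ is essentially the nontrivial analytic content of Neeman's and Schwarz's work -- equivalently, that the classical topology on $V_G\aq G$ coincides with the quotient topology on $\mathcal{KN}/K$ -- so as written that sentence is closer to a restatement of the theorem than to a proof of it. If you want the homeomorphism half to be genuinely self-contained, you should prove that the minimal-norm function is bounded on compact subsets of $V_G\aq G$ (or, alternatively, that $\pi_G$ restricted to $\mathcal{KN}$ is a closed map with compact fibers), rather than assert it; since you explicitly cite the references for the analytic input, this is a presentational caveat rather than a gap relative to the paper.
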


\begin{rem}
Let $V_G$ be an affine $G$-variety and $W_K\subset V_G$ a $K$-space. Then Theorem \ref{schwarzneeeman} implies whenever there exists a $K$-equivariant deformation retraction of $V_G$ onto $W_K$, the spaces $V_G\aq G$ and $W_K/K$ are homotopically equivalent.
\end{rem}

Recall our notation: $\X_r(G)$ is the $G$-character variety of $\F_r$, and $\X_r(K)$ is the $K$-character space of $\F_r$, where $K$ is a maximal compact subgroup of $G$.  We now can prove one of the main results of this paper.

\begin{thm}
\label{homotopy-type}
Let $G$ be a complex affine reductive group, and let $K$ be a maximal compact subgroup.  
Then $\X_r(K)$ and $\X_r(G)$ have the same homotopy type.
\end{thm}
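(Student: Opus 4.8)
The plan is to realise $\R_r(G)/K$ as a space that deformation retracts simultaneously onto $\X_r(K)$ and onto a homeomorphic copy of $\X_r(G)$; the asserted equality of homotopy types then follows by transitivity.

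First I would apply the machinery of Section~4 with $V_G=\R_r(G)$ and the diagonal conjugation action of $G$. This is an affine $G$-variety, equivariantly embedded as a closed subvariety of a representation $G\to\mathrm{GL}(V)$ (for $G=\SLm{n}$ one may take $V=\glm{n}^{\times r}$, as already noted), which we endow with a $K$-invariant Hermitian form as in that section (e.g.\ $\langle\ub,\vb\rangle=\sum_{k}\tr{\ub_k\vb_k^{*}}$ in the $\SLm{n}$ case, whose invariance under diagonal conjugation by $K=\SUm{n}$ uses $g^{*}=g^{-1}$). Since $V_G\aq G=\X_r(G)$ by definition, Theorem~\ref{schwarzneeeman} immediately yields two things: a homeomorphism $\mathcal{KN}/K\cong\X_r(G)$ of the Kempf--Ness quotient with the character variety, and a $K$-equivariant deformation retraction of $\R_r(G)$ onto $\mathcal{KN}$.

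Next I would pass to $K$-orbit spaces. Applying Proposition~\ref{Kequivariantdeformation} with $r=1$ to $T=\R_r(G)$ and the $K$-stable subspace $S=\mathcal{KN}$, the $K$-equivariant deformation retraction of the previous step descends to a deformation retraction of $\R_r(G)/K$ onto $\mathcal{KN}/K$; composing with the homeomorphism above, $\R_r(G)/K$ deformation retracts onto a copy of $\X_r(G)$. On the other hand, Corollary~\ref{defretract} already gives that $\X_r(K)$ is a strong deformation retract of $\R_r(G)/K$. Stringing these together,
$$\X_r(G) \cong \mathcal{KN}/K \simeq \R_r(G)/K \simeq \X_r(K),$$
so all three spaces have the same homotopy type.

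The conceptual content has all been supplied in Sections~3 and~4 --- the Kempf--Ness minimum principle, the Schwarz--Neeman properness-and-retraction statement, and the elementary fact that $K$-equivariant retractions descend to orbit spaces --- so this theorem is essentially their concatenation, and I do not expect a genuine obstacle. The one point needing care is topological bookkeeping: one must check that the quotient, Euclidean-subspace, and metric topologies on $\R_r(G)/K$, on $\mathcal{KN}/K$, and on $\X_r(G)$ all coincide, so that the phrase ``deformation retract'' is being used consistently when the two retractions are composed; this is precisely the discussion of Section~\ref{topologies} together with the properness clause of Theorem~\ref{schwarzneeeman}. (Upgrading this homotopy equivalence to an honest strong deformation retraction of $\X_r(G)$ onto $\X_r(K)$ --- which requires arranging the two retractions to be mutually compatible --- is the genuinely harder task taken up in Section~5.)
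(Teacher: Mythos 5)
Your argument is correct and is essentially the paper's own proof: both use $\R_r(G)/K$ as the auxiliary space, retracting it onto $\X_r(K)$ via Corollary \ref{defretract} and onto $\mathcal{KN}/K\cong\X_r(G)$ via Theorem \ref{schwarzneeeman} (with the $K$-equivariant retraction descended to quotients, which you make explicit through Proposition \ref{Kequivariantdeformation}), and then conclude by transitivity of homotopy equivalence. Your closing remarks on topologies and on why this is weaker than the strong deformation retraction of Section~5 match the paper's structure as well.
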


\begin{proof}
From Corollary \ref{defretract} we have that $\X_r(K)$ and $\R_r(G)/K$ have the same homotopy type.  From Theorem \ref{schwarzneeeman} we have that $\R_r(G)/K$ and $\mathcal{KN}/K\cong \X_r(G)$ have the same homotopy type.  Since homotopy equivalence is an equivalence relation, and so transitive, we conclude $\X_r(K)$ and $\X_r(G)$ have the same homotopy type.  In fact, two spaces are homotopy equivalent if and only if both spaces deformation retract to a third auxiliary space (see \cite{Hatcher}).
\end{proof}

\begin{cor}
The homotopy groups, homology groups, and cohomology groups of $\X_r(K)$ and $\X_r(G)$ are all isomorphic.
\end{cor}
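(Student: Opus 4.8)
The plan is to deduce everything from Theorem~\ref{homotopy-type}, which asserts that $\X_r(K)$ and $\X_r(G)$ have the same homotopy type, and then invoke the standard fact that homotopy groups, singular homology, and singular cohomology are all homotopy invariants of a space. Concretely, first I would fix a homotopy equivalence $f:\X_r(K)\to \X_r(G)$ (such an $f$ exists by Theorem~\ref{homotopy-type}; indeed, one may take it to be the composite of the inclusions and retractions through the auxiliary space $\R_r(G)/K$ furnished by Corollary~\ref{defretract} and Theorem~\ref{schwarzneeeman}). A homotopy equivalence induces, by definition, an isomorphism on every functor that sends homotopic maps to equal maps and identities to identities; so it suffices to recall that the three families of invariants in question have this property.

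Next I would spell out the three cases. For homotopy groups: any homotopy equivalence $f$ induces isomorphisms $f_\ast:\pi_n(\X_r(K),x_0)\xrightarrow{\ \sim\ }\pi_n(\X_r(G),f(x_0))$ for all $n\geq 0$ and all basepoints $x_0$; this is elementary from the definition of homotopy equivalence together with the fact that a homotopy of maps induces a natural identification of the induced maps on $\pi_n$ along the track of the basepoint. For singular homology and cohomology: homotopic maps induce the same map on $H_\ast(-;A)$ and on $H^\ast(-;A)$ for any abelian coefficient group $A$, hence a homotopy equivalence induces isomorphisms $H_\ast(\X_r(K))\cong H_\ast(\X_r(G))$ and $H^\ast(\X_r(K))\cong H^\ast(\X_r(G))$; see \cite{Hatcher}. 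Assembling these gives the corollary.

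The only point that warrants a word is the choice of basepoint in the homotopy-group statement, but this is harmless: the homotopy equivalence we use arises from an honest (strong) deformation retraction through $\R_r(G)/K$, by Corollary~\ref{defretract} and Theorem~\ref{schwarzneeeman} (and in fact from the strong deformation retraction of Theorem~\ref{thm:main}), so basepoints can be tracked explicitly along the retraction; alternatively, when $K$ is connected both spaces are path-connected, so the isomorphism type of $\pi_n$ is independent of the basepoint anyway. I do not anticipate any genuine obstacle here — this corollary is a formal consequence of Theorem~\ref{homotopy-type} and the homotopy invariance of the standard algebraic-topological invariants — so the ``hard part'' amounts only to noting that all the topologies in play agree, which was already settled in Section~\ref{topologies}, and that the relevant invariants are computed in the topological category.
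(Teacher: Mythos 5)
Your proposal is correct and matches the paper's (implicit) argument: the corollary is stated there without proof precisely because it follows formally from Theorem~\ref{homotopy-type} together with the homotopy invariance of $\pi_n$, $H_\ast$, and $H^\ast$, exactly as you argue. Your remarks on basepoints and topologies are fine but not needed beyond what the paper already records.
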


As a particular application of these isomorphisms, we mention the case of $G=\SLm{2}$.

\begin{cor}
The Poincar\'e polynomial for $\X_r(\SLm{2})$ is 
$$P_t(\X_r)=1+t-\frac{t(1+t^3)^r}{1-t^4}+\frac{t^3}{2}\left(\frac{(1+t)^r}{1-t^2}-\frac{(1-t)^r}{1+t^2}\right) .$$  
\end{cor}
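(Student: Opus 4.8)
The plan is to reduce to the compact side and then stratify. By Theorem~\ref{homotopy-type} the space $\X_r(\SLm{2})$ is homotopy equivalent to $\X_r(\SUm{2})\cong\SUm{2}^{\times r}/\SUm{2}$ (conjugation action), so it suffices to compute the rational Betti numbers of this compact space. I would write $\SUm{2}^{\times r}=\mathcal{R}\sqcup\mathcal{I}$, where $\mathcal{R}$ is the closed set of commuting (reducible) tuples and $\mathcal{I}$ the open set of irreducible tuples; this descends to a closed--open decomposition $\X_r(\SUm{2})=\X_r^{\mathrm{ab}}\sqcup\X_r^{\mathrm{irr}}$. Since the pair is triangulable (Section~\ref{topologies}) there is a long exact sequence, with $\mathbb{Q}$-coefficients throughout,
$$\cdots\to H^{k-1}(\X_r^{\mathrm{ab}})\to H^{k}_{c}(\X_r^{\mathrm{irr}})\to H^{k}(\X_r(\SUm{2}))\to H^{k}(\X_r^{\mathrm{ab}})\to H^{k+1}_{c}(\X_r^{\mathrm{irr}})\to\cdots,$$
and the strategy is to compute the two outer terms and control the connecting maps.

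For the abelian stratum: every commuting tuple is conjugate into a maximal torus $T\cong\mathrm{U}(1)$, uniquely unless it is central, and two tuples of $T$ are $\SUm{2}$-conjugate precisely when related by the Weyl group $W\cong\Zm{2}$; hence $\X_r^{\mathrm{ab}}\cong T^{\times r}/W$ with $W$ acting by simultaneous inversion. Thus $H^{*}(\X_r^{\mathrm{ab}})\cong H^{*}(T^{\times r})^{W}$ is the even-degree part of $\Lambda(\mathbb{Q}^{r})$, with Poincar\'e polynomial $\tfrac12\big((1+t)^{r}+(1-t)^{r}\big)$. For the irreducible stratum: on $\mathcal{I}$ the group $\SUm{2}$ acts with constant stabiliser its centre $\{\pm I\}$, so $\mathrm{PSU}(2)=\mathrm{SO}(3)$ acts freely and $\X_r^{\mathrm{irr}}=\mathcal{I}/\mathrm{SO}(3)$ is a smooth orientable manifold of dimension $3r-3$; hence $H^{*}(\X_r^{\mathrm{irr}})\cong H^{*}_{\mathrm{SO}(3)}(\mathcal{I})$ and $H^{*}_{c}(\X_r^{\mathrm{irr}})$ is its Poincar\'e dual.

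To compute $H^{*}_{\mathrm{SO}(3)}(\mathcal{I})$ I would run the equivariant long exact sequence of the pair $(\SUm{2}^{\times r},\mathcal{R})$ with two inputs. First, $H^{*}_{\mathrm{SO}(3)}(\SUm{2}^{\times r})\cong H^{*}_{\SUm{2}}(\SUm{2}^{\times r})$ (the central $\Zm{2}$ acting trivially) is the cohomology of the homotopy quotient of $\SUm{2}^{\times r}$ by conjugation, namely $\mathrm{Map}\big(\bigvee^{r}S^{1},B\SUm{2}\big)$ (the total space of the fibration $\SUm{2}^{\times r}\to\mathrm{Map}(\bigvee^{r}S^{1},B\SUm{2})\to B\SUm{2}$), which splits rationally as $B\SUm{2}\times(S^{3})^{\times r}$; so $H^{*}_{\SUm{2}}(\SUm{2}^{\times r})\cong\mathbb{Q}[c]\otimes\Lambda(e_1,\dots,e_r)$ with $|c|=4$, $|e_i|=3$, of Poincar\'e series $(1+t^{3})^{r}/(1-t^{4})$. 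Second, $H^{*}_{\mathrm{SO}(3)}(\mathcal{R})$ I would obtain by resolving $\mathcal{R}$ away from its $2^{r}$ central points as $\SUm{2}\times_{N(T)}T^{\times r}$, reducing to $H^{*}_{N(T)}(T^{\times r})\cong\big(\mathbb{Q}[u]\otimes\Lambda(\mathbb{Q}^{r})\big)^{W}$ ($|u|=2$, $W$ acting by $u\mapsto-u$ and by $-1$ on $\Lambda^{1}$), plus the contribution $\mathbb{Q}[c]^{\oplus 2^{r}}$ of the central points. Feeding all of this into the displayed long exact sequence and simplifying should yield the stated rational function; as numerical checks it returns $P_t=1$ for $r=1,2$ (consistent with $\X_1(\SLm{2})\cong\C$ and $\X_2(\SLm{2})\cong\C^{3}$) and $P_t=1+t^{6}$ for $r=3$ (consistent with $\X_3(\SLm{2})\simeq S^{6}$).

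The hard part is this last assembly. One must juggle the several nested (equivariant) long exact sequences attached to the filtration $\{\pm I\}^{\times r}\subset\mathcal{R}\subset\SUm{2}^{\times r}\supset\mathcal{I}$, keep careful track of the Weyl-group actions and of Poincar\'e--Lefschetz duality on $\X_r^{\mathrm{irr}}$ and $\mathcal{I}$, and then argue --- by degree parity and by the module structure over $H^{*}(B\SUm{2})=\mathbb{Q}[c]$ --- that every connecting homomorphism has exactly the rank forced by the answer. This really is necessary: the strata do not split off as direct summands (for $r=3$ the abelian stratum alone contributes $1+3t^{2}$, which must cancel against $H^{*}_{c}(\X_r^{\mathrm{irr}})$), so it is the interplay of the differentials, not mere additivity of Poincar\'e polynomials, that produces the formula.
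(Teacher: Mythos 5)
Your first step coincides with the paper's: Theorem \ref{homotopy-type} reduces the statement to computing the rational cohomology of the compact quotient $\SUm{2}^{\times r}/\SUm{2}$. At that point the paper simply cites Baird's equivariant-cohomology computation of this Poincar\'e polynomial, whereas you set out to reprove that computation from scratch. That would be a legitimate, self-contained alternative if it were carried out --- but as written it is a plan, not a proof, and the missing piece is precisely the mathematical content. The stratification into the abelian locus $T^{\times r}/W$ and the irreducible locus, the identification $H^*(\X_r^{\mathrm{irr}})\cong H^*_{\mathrm{SO}(3)}(\mathcal{I})$ with Poincar\'e--Lefschetz duality for $H^*_c$, and the inputs $H^*_{\SUm{2}}(\SUm{2}^{\times r})$ and $H^*_{\mathrm{SO}(3)}(\mathcal{R})$ are all reasonable ingredients (this is essentially the shape of Baird's argument), but you never actually assemble them. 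You concede that the strata do not contribute additively and that everything hinges on the ranks of the connecting homomorphisms in the long exact sequence(s), and your stated method for pinning those ranks down --- ``the rank forced by the answer'' --- is circular: you cannot use the formula you are trying to prove to determine the differentials. Until those connecting maps are computed by an independent argument (e.g.\ via the $\mathbb{Q}[c]$-module structure, restriction to the abelian locus, and explicit generators, as Baird does), the proof has a genuine gap at its decisive step.

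Two smaller points in the same vein: the rational splitting $H^*_{\SUm{2}}(\SUm{2}^{\times r})\cong\mathbb{Q}[c]\otimes\Lambda(e_1,\dots,e_r)$ is an equivariant-formality statement (collapse of the Serre spectral sequence of the Borel fibration over $B\SUm{2}$) that you assert rather than justify --- it is true, for instance by a localization/fixed-point dimension count against $T^{\times r}$, but it needs saying; and your treatment of $H^*_{\mathrm{SO}(3)}(\mathcal{R})$ via the resolution $\SUm{2}\times_{N(T)}T^{\times r}\to\mathcal{R}$ ``plus the contribution of the $2^r$ central points'' is not a direct sum --- it requires a further Mayer--Vietoris or excision comparison whose maps must also be controlled. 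Your numerical sanity checks for $r=1,2,3$ are correct and consistent with $\X_1\cong\C$, $\X_2\cong\C^3$, $\X_3\simeq S^6$, but they verify the target formula, not the proof.
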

\begin{proof}
In 2008 T. Baird \cite{Ba}, using methods of equivariant cohomology, showed that the 
Poincar\'e polynomial for $\X_r(\SUm{2})\cong\SUm{2}^{\times r}/\SUm{2}$ is $P_t$ above.
Thus, Theorem \ref{homotopy-type} establishes the corollary.
\end{proof}

\section{Strong Deformation Retraction of Character Varieties}

In general, the Kempf-Ness set depends on a choice of $G$-equivariant embedding $\R_r(G)\hookrightarrow V\cong \C^N$ and a choice of $K$-invariant Hermitian form on $V$, for a fixed choice of maximal compact $K$, where $G=K_{\C}$.  We show that making natural choices allows for a strong deformation retraction of $\X_r(G)=\R_r(G)\aq G$ onto $\X_r(K)=\R_r(K)/K$.  

However, we first show that the topological $K$-quotient $\X_r(K)$ is a subspace of the categorical $G$-quotient $\X_r(G)$.

The following lemma, which seems to be standard despite our lack of reference, will prove useful.  See the appendix for a proof.
\begin{lem}\label{unitarylemma}
If two $K$-valued free group representations are conjugate by an element in $G$, then they are conjugate by an element in $K$. 
\end{lem}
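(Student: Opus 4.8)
The plan is to reduce the statement to a well-known fact about compact groups, namely that two tuples in $K^{\times r}$ that are conjugate in $G$ are conjugate in $K$. Concretely, suppose $\rho, \rho' \in \R_r(K)$ satisfy $\rho' = g \rho g^{-1}$ for some $g \in G$; writing things out on generators this means $\rho'(\xt_i) = g\rho(\xt_i)g^{-1}$ for all $i$, i.e. the $K$-tuples $(\rho(\xt_1),\dots,\rho(\xt_r))$ and $(\rho'(\xt_1),\dots,\rho'(\xt_r))$ are simultaneously $G$-conjugate. So I must show: if $A_i, B_i \in K$ with $B_i = g A_i g^{-1}$ for all $i$ and a single $g \in G$, then there is $k \in K$ with $B_i = k A_i k^{-1}$ for all $i$.

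First I would invoke the Cartan/polar decomposition $G = K\exp\mathfrak{p}$ from the start of Section 3 and write $g = k_0 e^{p}$ with $k_0 \in K$, $p \in \mathfrak{p}$. Conjugating by $k_0^{-1}$ on both sides (which is allowed since we only need to change $\rho'$ by a $K$-conjugation, and the claim we want is insensitive to that), we may assume $g = e^{p}$ is a positive element, i.e. $g^{*} = g$. Then the equations become $B_i = e^{p} A_i e^{-p}$. Taking the Cartan involution $*$ of both sides and using $A_i^{*} = A_i^{-1}$, $B_i^{*} = B_i^{-1}$ (as $A_i, B_i \in K$), we get $B_i^{-1} = e^{-p} A_i^{-1} e^{p}$, hence $B_i = e^{-p} A_i e^{p}$. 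Comparing with $B_i = e^{p} A_i e^{-p}$ gives $e^{2p} A_i = A_i e^{2p}$, so $e^{2p}$ — and therefore the positive element $e^{p}$ itself, being a continuous function of $e^{2p}$ — commutes with every $A_i$. But then $B_i = e^{p} A_i e^{-p} = A_i$ for all $i$, so in fact the original representations were already conjugate by $k_0 \in K$.

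The only genuine point requiring care is the step "$e^{2p}$ commutes with each $A_i$ implies $e^{p}$ commutes with each $A_i$." This follows because $e^{p}$ lies in the closure of the polynomial algebra generated by $e^{2p}$: the positive square root of a positive operator is a norm-limit of polynomials in that operator (or, more concretely, $p = \tfrac{1}{2}\log(e^{2p})$ is such a limit since $e^{2p}$ has positive spectrum), so anything commuting with $e^{2p}$ commutes with $e^{p}$. Alternatively one can argue via formula (\ref{eq:exp}): any $h \in K$ centralizing $e^{2p} = (e^{p})^{2}$ must centralize $e^{p}$, but here the $A_i$ are in $K$ so this applies directly without even passing to the square root lemma.

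I expect the main (and essentially only) obstacle to be bookkeeping: making sure the reduction "$g = k_0 e^p \leadsto g = e^p$" is stated so that it really does establish the conclusion for the original $\rho'$ (it does, since the $k$ we produce will be $k_0$ composed with the trivial conjugation), and handling the disconnected case — but by the Remark following the complexification discussion we may assume $G$ (hence the relevant component structure) is irreducible, and in any case $K$ meets every component issue is absorbed into the global polar decomposition $m : K \times \exp\mathfrak{p} \to G$, which is a diffeomorphism regardless of connectedness. No deep input is needed beyond Proposition \ref{GKdeformation}'s underlying Cartan decomposition and the elementary spectral fact about square roots.
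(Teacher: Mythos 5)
Your proof is correct, but it takes a genuinely different route from the paper's. The paper argues in the Appendix via normal forms: it diagonalizes the first entries of the two tuples using maximal torus theory, notes that the Weyl group (permutations of eigenvalues) is realized inside $N_K(T)\subset K$, and then shows the residual conjugating element must be a permutation matrix, hence lies in $K$. You instead use the global Cartan (polar) decomposition $g=k_0e^{p}$ together with the involution $*$: applying $*$ to $B_i=e^{p}A_ie^{-p}$ and using $A_i^*=A_i^{-1}$, $B_i^*=B_i^{-1}$, $(e^{\pm p})^*=e^{\pm p}$ gives $B_i=e^{-p}A_ie^{p}$, whence $e^{2p}$ centralizes each $A_i$; the square-root step (either by functional calculus on the positive element $e^{2p}$ in a unitary embedding of $K$, or directly from formula \eqref{eq:exp} with $t=\tfrac12$) then shows $e^{p}$ centralizes each $A_i$, so $\rho'=k_0\rho k_0^{-1}$. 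Both arguments are sound; yours is coordinate-free (no appeal to diagonalizability, tori, or permutation matrices, so it works verbatim for any compact $K$ and its complexification, connected or not) and it yields the sharper conclusion that the conjugator may be taken to be the $K$-factor of $g$ itself, with the positive factor $e^{p}$ commuting with the whole image of $\rho$; the paper's approach, by contrast, stays within the elementary matrix framework it uses elsewhere (eigenvalues, diagonal forms, Weyl group as permutations) at the cost of being tied to that picture. Your bookkeeping of the reduction to $g=e^{p}$ is handled correctly, since the element you produce at the end is exactly $k_0$ applied to the original $\rho'$.
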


\begin{prop}
There exits an injection $\iota:\X_r(K)\hookrightarrow \X_r(G)$. 
\end{prop}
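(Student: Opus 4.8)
The plan is to construct the map $\iota$ explicitly and then check injectivity using Lemma~\ref{unitarylemma}. Since $\R_r(K) = K^{\times r} \subset G^{\times r} = \R_r(G)$ is $K$-stable, and since every element of $\X_r(K) = \R_r(K)/K$ is an equivalence class $[\rho]_K$ of $K$-valued representations, I would first observe that the inclusion $\R_r(K) \hookrightarrow \R_r(G)$ followed by the categorical quotient map $\pi_G : \R_r(G) \to \X_r(G)$ is $K$-invariant (indeed $G$-invariant on the larger space, and $K \subset G$). Therefore it factors through the orbit space $\R_r(K)/K$, yielding a well-defined map $\iota : \X_r(K) \to \X_r(G)$ sending $[\rho]_K \mapsto \pi_G(\rho) = [[\rho]]_G$.

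Next I would prove $\iota$ is injective. Suppose $\rho, \sigma \in \R_r(K)$ are two $K$-valued representations with $\iota([\rho]_K) = \iota([\sigma]_K)$, i.e. $\pi_G(\rho) = \pi_G(\sigma)$ in $\X_r(G)$. By the properties of the good categorical quotient (in particular that $\pi_G$ separates closed $G$-orbits), this means the Zariski closures of the $G$-orbits $G\cdot\rho$ and $G\cdot\sigma$ intersect. The key point now is that $K$-valued representations are poly-stable: since $K$ is compact, its $K$-orbits are closed, and by a standard fact (Kempf--Ness, or the argument that a $K$-orbit sitting inside a $G$-orbit forces the $G$-orbit to be closed) the $G$-orbit of a point of $\mathcal{KN}$ — which contains $\R_r(K)$ after choosing the natural $K$-invariant Hermitian form — is closed. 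Hence $G\cdot\rho$ and $G\cdot\sigma$ are already closed, so their intersecting closures forces $G\cdot\rho = G\cdot\sigma$; that is, $\rho$ and $\sigma$ are conjugate by an element of $G$. By Lemma~\ref{unitarylemma}, they are then conjugate by an element of $K$, so $[\rho]_K = [\sigma]_K$, proving injectivity.

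The main obstacle is justifying that $K$-valued representations give closed $G$-orbits (equivalently, lie in the Kempf--Ness set for a suitable choice of form). One clean way is to note that under the standard embedding $G \subset \mathrm{O}(N,\C)$ with the Hermitian form for which $K \subset \mathrm{U}(N)$, the function $p_v(g) = \|g \cdot v\|^2$ for $v \in \R_r(K)$ is minimized at $g = \id$ because the $K$-orbit of $v$ consists of vectors of equal norm and this is already the minimum over $G$ (this is exactly the content of $\R_r(K) \subset \mathcal{KN}$, which the paper develops more carefully in Section~5); then \cite{KN} gives closedness of the $G$-orbit. Alternatively, one can cite the general fact that a representation into a reductive group whose image has compact closure is poly-stable. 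Either route reduces the proposition to Lemma~\ref{unitarylemma}, which handles the remaining passage from $G$-conjugacy to $K$-conjugacy.
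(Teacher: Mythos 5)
Your proof is correct and follows essentially the same route as the paper: both define $\iota$ by $[\rho]_K\mapsto[[\rho]]_G$, observe that $K$-valued representations are poly-stable (closed $G$-orbits), use the separation of closed orbits in the good quotient (equivalently the bijection $\R_r^{ps}(G)/G\to\X_r(G)$), and finish with Lemma~\ref{unitarylemma} to upgrade $G$-conjugacy to $K$-conjugacy. If anything, you are more explicit than the paper about why $K$-valued representations have closed $G$-orbits (via $\R_r(K)\subset\mathcal{KN}$ or the compact-image poly-stability fact), a point the paper asserts briefly here and only verifies by computation later, in the proof of Proposition~\ref{section}.
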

\begin{proof}
Let $[\rho]_G$ be the $G$-orbit of the representation
$\rho$. Then $\iota$ is defined as follows: $[\rho]_K\mapsto [[\rho]]_G$,
where $[[\rho]]_G$ is the union of all $[\rho]_G$
such that $\overline{[\rho]_{G}}\cap\overline{[\rho^{\prime}]_G}\not=\emptyset$.
This is clearly well defined since $K\subset G$.

Let $\R_r^{ps}(G)$ be the set consisting of all representations with closed orbits (poly-stable points).  Since $K$ is compact, all representations in $\R_r(K)$ have closed $K$-orbits, which implies they have closed $G$-orbits as well. Consequently, Lemma \ref{unitarylemma} shows there is an injection $\R_r(K)/K\hookrightarrow\R_r^{ps}(G)/G$, given by $[\rho]_K\mapsto[\rho]_G$.

Since $\R_r^{ps}(G)/G$ is in one-to-one correspondence with $\X_r(G)$, each
orbit $[[\rho]]_G$ has a poly-stable representative
$\rho^{(ps)}\in[[\rho]]_G$ whose $G$-orbit
is closed. The bijection $\R_r^{ps}(G)/G\to\X_r(G)$ is given by $[\rho]_G\mapsto[[\rho]]_G$.
Therefore, $\iota$ is injective since it is the composit of two injections:
$[\rho]_K\mapsto[\rho]_G\mapsto[[\rho]]_G.$\end{proof}

\begin{rem}
An algebraic proof: the real and imaginary parts of a set of generators for $\C[\R_r(G)]^G$ will generate $\mathbb{R}[\R_r(K)]^K$.  Let these generators be denoted $p=(p_1,...,p_{2N})$.  Then $\R_r(K)/K\cong p(\R_r(K))\subset p(\R_r(G))\cong \R_r(G)\aq G$.  In these terms, $\iota$ is simply the inclusion mapping; in particular, $\iota$ is injective.
\end{rem}

\begin{prop}
There exists a continuous surjection $$\pi_{G/K}:\R_r(G)/K\to \X_r(G)$$
such that $\pi_{G/K}(\X_r(K))=\iota(\X_r(K))$. 
\end{prop}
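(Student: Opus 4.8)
The plan is to construct $\pi_{G/K}$ as a composite of two maps whose existence we already have in hand. First I would recall from Theorem~\ref{schwarzneeeman} that there is a $K$-equivariant deformation retraction of $\R_r(G)$ onto its Kempf-Ness set $\mathcal{KN}$, and that the induced map $\mathcal{KN}/K\to \X_r(G)$ is a homeomorphism. Denote by $\rho_{\mathcal{KN}}:\R_r(G)\to\mathcal{KN}$ the retraction at time $1$; it is continuous and $K$-equivariant, hence it descends to a continuous map $\overline{\rho}:\R_r(G)/K\to\mathcal{KN}/K$. Composing with the homeomorphism $\mathcal{KN}/K\xrightarrow{\ \sim\ }\X_r(G)$ gives a continuous map $\pi_{G/K}:\R_r(G)/K\to\X_r(G)$. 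Surjectivity of $\pi_{G/K}$ is immediate: the homeomorphism is surjective, and $\overline{\rho}$ is surjective because it restricts to the identity on $\mathcal{KN}/K\subset \R_r(G)/K$ (the retraction fixes $\mathcal{KN}$ pointwise, by the ``strong'' part of the deformation retraction in Theorem~\ref{schwarzneeeman}).

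Alternatively, and perhaps more transparently, I would define $\pi_{G/K}$ directly as the map that sends a $K$-orbit $[\rho]_K$ to the extended orbit equivalence class $[[\rho]]_G$. This is well defined because $K\subset G$ forces $K$-conjugate representations to be $G$-conjugate, hence to lie in the same class; and it is continuous because it is the composite $\R_r(G)/K\to\R_r(G)/G\to\X_r(G)$, where the first map is the natural quotient projection (continuous by definition of the quotient topologies) and the second identifies $G$-orbits whose closures meet — this second map is precisely $\R_r(G)/G\to\R_r(G)\aq G=\X_r(G)$, continuous because $\pi_G:\R_r(G)\to\X_r(G)$ is a morphism and factors through $\R_r(G)/G$. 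Surjectivity again follows since $\pi_G$ itself is surjective. One should check these two descriptions agree, which they do: the Kempf-Ness retraction $\rho_{\mathcal{KN}}$ moves each $\rho$ within its extended $G$-orbit equivalence class to the unique (up to $K$) poly-stable representative with closed $G$-orbit, so the induced class is exactly $[[\rho]]_G$.

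It remains to verify the compatibility condition $\pi_{G/K}(\X_r(K))=\iota(\X_r(K))$, and this is the step I expect to be the real, if modest, obstacle — it is where Lemma~\ref{unitarylemma} enters. Regard $\X_r(K)=\R_r(K)/K$ as a subspace of $\R_r(G)/K$ via the inclusion $\R_r(K)\hookrightarrow\R_r(G)$. For $[\rho]_K\in\X_r(K)$ with $\rho$ a $K$-valued representation, $\pi_{G/K}([\rho]_K)=[[\rho]]_G$; on the other hand $\iota([\rho]_K)=[[\rho]]_G$ by the very definition of $\iota$ in the preceding proposition. Hence $\pi_{G/K}|_{\X_r(K)}=\iota$, which in particular gives $\pi_{G/K}(\X_r(K))=\iota(\X_r(K))$. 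The only subtlety is that a $K$-valued representation has a \emph{closed} $G$-orbit (since compact orbits are closed in $\R_r(G)$), so it is already poly-stable and the Kempf-Ness retraction fixes it up to $K$; and Lemma~\ref{unitarylemma} guarantees that the assignment $[\rho]_K\mapsto[\rho]_G$ is injective, so no collapsing occurs on $\X_r(K)$ beyond what $\iota$ already records. This pins down the image exactly as claimed, completing the proof.
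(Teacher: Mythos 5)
Your second, ``direct'' description of $\pi_{G/K}$ --- send $[\rho]_K$ to $[[\rho]]_G$, check well-definedness from $K\subset G$, get continuity by factoring through the quotient topologies, surjectivity from surjectivity of $\pi_G$, and the image condition from $\pi_{G/K}|_{\X_r(K)}=\iota$ --- is exactly the paper's proof, so the proposal is correct and essentially the same argument. Two small remarks: the alternative construction via the Kempf--Ness retraction needs the extra fact (not contained in the bare statement of Theorem \ref{schwarzneeeman}) that the time-one retraction moves each representation only within its extended orbit equivalence class; this holds for the gradient-flow construction but would have to be cited or proved, whereas your direct definition avoids it entirely. Also, Lemma \ref{unitarylemma} is not actually needed for this proposition --- it enters only in the injectivity of $\iota$ in the preceding proposition --- since the equality $\pi_{G/K}|_{\X_r(K)}=\iota$ already follows from the two maps being given by the same formula.
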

\begin{proof}
This mapping is given by extending the definition of $\iota$ above
to all of $\R_r(G)/K$. Namely, $[\rho]_K\mapsto[[\rho]]_G$.
Since $K\subset G$, $\pi_{G/K}$ is well defined. For every $\rho\in\R_r(G)$
there exists orbits $[\rho]_K\subset[[\rho]]_G.$
Therefore, $\pi_{G/K}$ is surjective. By definition, it is equal
to $\iota$ when restricting to $\X_r(K)$ and hence $\pi_{G/K}(\X_r(K))=\iota(\X_r(K))$.
For continuity, consider the commutative diagram:

$$
\xymatrix{ & \R_r(G)\ar[dl]_{\pi_{K}}\ar[dr]^{\pi_{G}}\\
\R_r(G)/K\ar[rr]_{\pi_{G/K}} &  & \X_r(G)}
$$

Take $U$ open in $\X_r(G)$.  The morphism $\pi_{G}$ is given
by polynomials and so is continuous in the strongest of the compatible topologies discussed in Section \ref{topologies} . Consequently,
$V=\pi_{G}^{-1}(U)$ is open and $G$-invariant and therefore $K$-invariant
since $K\subset G$. Hence, $\pi_{K}(V)$ is also open by definition
of the quotient topology. By commutativity of the above diagram $\pi_{G/K}$
is then continuous.
\end{proof}
\begin{cor}
The injection $\iota$ is continuous. 
\end{cor}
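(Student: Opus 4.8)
The plan is to obtain continuity of $\iota$ essentially for free, by exhibiting it as a restriction (equivalently, as a map induced on a quotient) of a map already known to be continuous. The quickest route: by the preceding Proposition, $\pi_{G/K}:\R_r(G)/K\to\X_r(G)$ is continuous and, by its very construction, its restriction to $\X_r(K)$ coincides with $\iota$; here one uses Corollary \ref{defretract} (together with the topology discussion of Section \ref{topologies}) to regard $\X_r(K)\cong\R_r(K)/K$ as the subspace of $\R_r(G)/K$ onto which the deformation retraction of Corollary \ref{defretract} maps, so that its quotient topology agrees with the inherited subspace topology. Since the restriction of a continuous map to a subspace is continuous, $\iota$ is continuous.

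Alternatively, to sidestep any appeal to the compatibility of topologies, I would argue directly from the quotient topology on $\X_r(K)$. Let $q:\R_r(K)\to\R_r(K)/K\cong\X_r(K)$ be the quotient map. The inclusion $\R_r(K)\hookrightarrow\R_r(G)$ is continuous, and $\pi_G:\R_r(G)\to\X_r(G)$ is continuous in the compatible topologies of Section \ref{topologies} since it is given by polynomials (exactly as used in the proof of the preceding Proposition). Hence the composite $\R_r(K)\to\X_r(G)$, $\rho\mapsto[[\rho]]_G$, is continuous. This composite is constant on $K$-orbits, so it factors through $q$, and the factoring map sends $[\rho]_K\mapsto[[\rho]]_G$, i.e.\ it is exactly $\iota$. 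As $q$ is a topological quotient map, the universal property of the quotient topology yields that $\iota$ is continuous.

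I expect no genuine obstacle: the statement is a formal corollary of the continuity of $\pi_G$ (hence of $\pi_{G/K}$) and the defining property of the quotient topology on $\X_r(K)$. The only point deserving a moment's care is purely topological — that the quotient topology on $\R_r(K)/K$, the subspace topology it inherits inside $\R_r(G)/K$, and the metric/semi-algebraic topology on $\X_r(K)$ all agree — and this was already settled in Section \ref{topologies}; in any case the second route above avoids it entirely, using only that $q$ is a topological quotient.
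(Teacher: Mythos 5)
Your first argument is exactly the paper's proof: $\iota$ is the restriction of the continuous map $\pi_{G/K}$ to the subspace $\X_r(K)\subset\R_r(G)/K$, so it is continuous. Your second route, via the universal property of the quotient map $\R_r(K)\to\R_r(K)/K$, is a correct minor variant that merely sidesteps the (already settled) compatibility of the quotient and subspace topologies on $\X_r(K)$.
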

\begin{proof}
In the proof of the last proposition we saw that $\pi_{G/K}\big|_{\X_r(K)}=\iota$,
in other words the following diagram is commutative:

$$
\xymatrix{ & \X_r(K)\ar[dl]_{\mathrm{id}}\ar[dr]^{\iota}\\
\R_r(G)/K\ar[rr]_{\pi_{G/K}} &  & \X_r(G)}
$$

The continuity of $\iota$ then follows from the continuity of $\pi_{G/K}$.
\end{proof}

\begin{prop}
\label{section} There exists a continuous mapping $$\sigma:\X_r(G)\to \R_r(G)/K$$
such that $\pi_{G/K}\circ\sigma=\mathrm{id}$, and $\sigma\big|_{\iota(\X_r(K)}=\mathrm{id}\big|_{\X_r(K)}$. 
\end{prop}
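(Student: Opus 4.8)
The plan is to build $\sigma$ out of the Kempf--Ness set of Section~4, once we commit to a \emph{natural} equivariant embedding and Hermitian form. Realize $G$ as the closed subgroup $K_{\C}$ of $\mathrm{O}(n,\C)\subset\GLm{n}$ as in Section~2; then $K=G\cap\mathrm{U}(n)$, and the Cartan involution $g\mapsto g^{*}$ of Section~3 is the restriction of conjugate--transpose. Thus $\R_r(G)=G^{\times r}$ is embedded $G$-equivariantly (for diagonal conjugation) in $V:=\glm{n}^{\times r}$, and we equip $V$ with the Hermitian form $\langle (A_i)_i,(B_i)_i\rangle=\sum_{i=1}^{r}\tr{A_iB_i^{*}}$, which is invariant under conjugation by $\mathrm{U}(n)$, hence by $K$. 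With these choices the Kempf--Ness set $\mathcal{KN}\subset\R_r(G)$ of Section~4 is determined.

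The key point is that $\R_r(K)\subseteq\mathcal{KN}$. Fix $v=(u_1,\dots,u_r)\in K^{\times r}=\R_r(K)$, so that each $u_i$ is unitary and $u_iu_i^{*}=\id=u_i^{*}u_i$. Since the norm is $K$-invariant, $p_v(g)=\norm{g\cdot v}^{2}$ satisfies $p_v(kg)=p_v(g)$ for $k\in K$, so $p_v$ descends to $K\backslash G$; by the Cartan decomposition $G=K\exp\mathfrak{p}$ from Proposition~\ref{GKdeformation} and its surrounding discussion, the Kempf--Ness condition $(dp_v)_{\id}=0$ is equivalent to the vanishing of $\frac{d}{dt}\big|_{t=0}p_v(\exp(tX))$ for every $X\in\mathfrak{p}$. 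Differentiating the conjugation action, $\frac{d}{dt}\big|_{t=0}\exp(tX)u_i\exp(-tX)=[X,u_i]$, so $\frac{d}{dt}\big|_{t=0}p_v(\exp(tX))=2\,\mathrm{Re}\sum_i\langle[X,u_i],u_i\rangle=2\,\mathrm{Re}\sum_i\bigl(\tr{Xu_iu_i^{*}}-\tr{Xu_i^{*}u_i}\bigr)$, which is $0$ because $u_iu_i^{*}=u_i^{*}u_i=\id$. Hence $v\in\mathcal{KN}$.

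Now I define $\sigma$. By Theorem~\ref{schwarzneeeman}, the composite $\mathcal{KN}\hookrightarrow\R_r(G)\xrightarrow{\pi_G}\X_r(G)$ induces a homeomorphism $h\colon\mathcal{KN}/K\xrightarrow{\cong}\X_r(G)$; and since $\mathcal{KN}$ is $K$-invariant, the inclusion descends to a continuous injection $j\colon\mathcal{KN}/K\hookrightarrow\R_r(G)/K$. Put $\sigma:=j\circ h^{-1}$, continuous as a composite of continuous maps. On the image of $j$ one has $\pi_{G/K}([\rho]_K)=[[\rho]]_G=\pi_G(\rho)$, which is precisely $h([\rho]_K)$; hence $\pi_{G/K}\circ j=h$, and so $\pi_{G/K}\circ\sigma=h\circ h^{-1}=\mathrm{id}_{\X_r(G)}$. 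Finally, for $\rho\in\R_r(K)$ the previous paragraph gives $\rho\in\mathcal{KN}$, whence $h^{-1}(\iota([\rho]_K))=h^{-1}(\pi_G(\rho))=[\rho]_K\in\mathcal{KN}/K$ and therefore $\sigma(\iota([\rho]_K))=j([\rho]_K)=[\rho]_K\in\R_r(G)/K$; under the injection $\R_r(K)/K\hookrightarrow\R_r(G)/K$ afforded by Lemma~\ref{unitarylemma} this is exactly the point $[\rho]_K\in\X_r(K)$, so $\sigma\big|_{\iota(\X_r(K))}=\mathrm{id}\big|_{\X_r(K)}$.

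I expect the second paragraph to be the main obstacle: fixing the embedding so that the abstract Cartan involution of $G$ really is conjugate--transpose (and $K=G\cap\mathrm{U}(n)$), and then extracting the clean identity for $(dp_v)_{\id}$ from the somewhat implicit description of $\mathcal{KN}$ in Section~4 --- in particular, justifying the reduction to $\mathfrak{p}$-directions via $K$-invariance of $p_v$. Once $\R_r(K)\subseteq\mathcal{KN}$ is in hand, the construction of $\sigma$ and the verification of its two properties are formal, driven entirely by the homeomorphism of Theorem~\ref{schwarzneeeman}.
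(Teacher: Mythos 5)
Your proposal is correct and follows essentially the same route as the paper: it defines $\sigma$ by composing the inverse of the Kempf--Ness homeomorphism of Theorem \ref{schwarzneeeman} with the inclusion $\mathcal{KN}/K\subset \R_r(G)/K$, using the same equivariant embedding into $\glm{n}^{\times r}$ and the same $K$-invariant form $\sum_i\tr{x_iy_i^*}$, and the key step in both arguments is showing $\R_r(K)\subset\mathcal{KN}$ by differentiating $p_v$ at the identity. Your unitarity computation $\langle[X,u_i],u_i\rangle=\tr{Xu_iu_i^*}-\tr{Xu_i^*u_i}=0$ and your direct identification $h^{-1}(\iota([\rho]_K))=[\rho]_K$ are just slightly more streamlined versions of the paper's computation and of its argument via injectivity of $\iota$.
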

\begin{proof}
The Kempf-Ness set provides a continuous mapping $\sigma$ by composing the homeomorphism with the inclusion in the following commutative diagram:

$$\xymatrix{
\R_r(G)/K \ar[dr]^{\pi_{G/K}} & \R_r(G)\ar[l]_{\pi_K}\ar[d]^{\pi_G}\\
\mathcal{KN}/K \ar[u]^{\cup} &  \X_r(G)\ar[l]_{\cong}
}.$$

Letting the homeomorphism $\mathcal{KN}/K\cong \X_r(G)$ be denoted by $h$, we note that $h$ picks out a representative from $[[\rho]]_G$ and $\pi_{G/K}$ maps any element $[\rho]_K$ to $[[\rho]]_G$.  Therefore, $\pi_{G/K}\left(h([[\rho]]_G)\right)=[[\rho]]_G$.

We now argue that $\sigma$ is ``$K$-preserving.''  Since $G$ is linear, there exists $n>0$ so $G$ acts on $V=\mathfrak{gl}(n,\C)^{\times r}\cong\C^{rn^2}$ by simultaneous conjugation.  Thus, $\R_r(G)$ naturally sits inside $V$ equivariantly.  For generic matrices $x,y$ we can define $\langle x,y\rangle=\tr{xy^{*}}$, where $y^*$ is the Cartan involution.  For any element $k\in K$ we have $\langle kxk^{-1},kyk^{-1}\rangle=\langle x,y\rangle$.  Then for $2r$ generic matrices $x_i,y_i$ $$\langle (x_1,...,x_r) ,(y_1,...,y_r)\rangle=\sum_{i=1}^r \langle x_i,y_i\rangle$$  defines a $K$-invariant Hermitian form.

To prove the proposition we must show that whenever $\rho$ is $K$-valued the Kempf-Ness set maps $[[\rho]]_G$ to a $K$-conjugate of $\rho$.

The Kempf-Ness set is defined by condition $(dp_v)_{I}=0$ where $p_v(g)=\norm{g\cdot v}^2=\langle g\cdot v,g\cdot v \rangle$.  Let $g_t$ be a sequence of elements in $G$ so $g_0=\id$ and let $v=\rho$ be a representation. Then 
\begin{align*}
\frac{d}{dt}p_v(g_t)&=\frac{d}{dt}\langle g_t\cdot v,g_t\cdot v \rangle\\
&=\frac{d}{dt}\langle g_t\rho g_t^{-1},g_t\rho g_t^{-1} \rangle\\
&=\langle \frac{d}{dt}(g_t\rho g_t^{-1}),\rho \rangle +\langle \rho, \frac{d}{dt}(g_t\rho g_t^{-1})\rangle.
\end{align*}
We now compute $$\frac{d}{dt}(g_t\rho g_t^{-1})_0=\left(\frac{d g_t}{dt}\right)_0\rho g^{-1}_0-g_0\rho g^{-1}_0\left(\frac{d g_t}{dt}\right)_0 g_0^{-1}=\left(\frac{d g_t}{dt}\right)_0\rho-\rho\left(\frac{d g_t}{dt}\right)_0.$$  We may identify any tangent space with the tangent space at the identity by right invariant vector fields.  Right translate by $\rho^{-1}$ and define $u_0=-\left(\frac{d g_t}{dt}\right)_0$, then define $$u(\wt)=\frac{d}{dt}(g_t\rho g_t^{-1})_0\big|_{\rho^{-1}}=\mathrm{Ad}_\rho(u_0)-u_0,$$ for any $\wt\in \F_r$.

Putting these computations together we have
$$\frac{d}{dt}p_v(g_t)_0=\langle \mathrm{Ad}_\rho(u_0)-u_0,\rho \rangle +\langle \rho, \mathrm{Ad}_\rho(u_0)-u_0\rangle.$$

When $\rho$ is $K$-valued we have
\begin{align*}
\langle \mathrm{Ad}_\rho(u_0)-u_0,\rho \rangle&=\langle \rho(u_0)\rho^{-1},\rho  \rangle-\langle u_0,\rho \rangle\\
&=\langle \rho(u_0)\rho^{-1},\rho  \rangle-\langle \rho (u_0)\rho^{-1},\rho \rho \rho^{-1} \rangle\\
&=0,
\end{align*}
since $\langle\ ,\ \rangle$ is $K$-invariant (Hermitian).  Likewise, $\langle \rho, \mathrm{Ad}_\rho(u_0)-u_0 \rangle=0$. 

Therefore, whenever $[[\rho]]_G$ contains a $K$-valued representation it is included in the Kempf-Ness set.  We then conclude ``$\sigma$ is $K$-preserving:''  $\sigma\circ \iota(\X_r(K))\subset \X_r(K)$.   This implies $\iota\circ \sigma \circ \iota= \pi_{G/K}\circ \sigma \circ \iota=\iota\circ \mathrm{id}.$
Then since $\iota$ is injective, we conclude $\sigma\circ \iota=\mathrm{id}$ on $\X_r(K)$.\end{proof}

\begin{thm}
There exists a family of mappings $\Phi_{t}^{\sigma}:\X_r(G)\to\X_r(G)$ that determine a strong
deformation retraction of $\X_r(G)$ onto $\X_r(K)$. 
\end{thm}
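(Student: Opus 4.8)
The plan is to glue together the three pieces already constructed: the strong deformation retraction of $\R_r(G)/K$ onto $\X_r(K)$ from Corollary~\ref{defretract}, the continuous surjection $\pi_{G/K}:\R_r(G)/K\to\X_r(G)$, and the continuous section $\sigma:\X_r(G)\to\R_r(G)/K$ of Proposition~\ref{section}. Write $\Psi_t:\R_r(G)/K\to\R_r(G)/K$ for the retraction from Corollary~\ref{defretract}; it satisfies $\Psi_0=\mathrm{id}$, $\Psi_1(\R_r(G)/K)\subseteq\X_r(K)$, and $\Psi_t|_{\X_r(K)}=\mathrm{id}$ for all $t$ (this last because in Proposition~\ref{GKdeformation} one has $\phi_t(k)=k$ for $k\in K$, and this fixed-point property descends through Proposition~\ref{Kequivariantdeformation}), and $(x,t)\mapsto\Psi_t(x)$ is continuous on $(\R_r(G)/K)\times[0,1]$. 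I would then define
$$\Phi_t^\sigma:=\pi_{G/K}\circ\Psi_t\circ\sigma:\X_r(G)\longrightarrow\X_r(G).$$

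There are four things to verify, each a short unwinding of the properties just listed. First, $\Phi_0^\sigma=\pi_{G/K}\circ\sigma=\mathrm{id}_{\X_r(G)}$, since $\Psi_0=\mathrm{id}$ and $\pi_{G/K}\circ\sigma=\mathrm{id}$ by Proposition~\ref{section}. Second, $\Phi_1^\sigma(\X_r(G))\subseteq\pi_{G/K}(\X_r(K))=\iota(\X_r(K))$, because $\Psi_1$ lands in $\X_r(K)$. Third, for $y\in\X_r(K)$ one has $\sigma(\iota(y))=y$ (the ``$\sigma$ is $K$-preserving'' statement of Proposition~\ref{section}), hence $\Psi_t(\sigma(\iota(y)))=\Psi_t(y)=y$, hence $\pi_{G/K}(y)=\iota(y)$; so $\Phi_t^\sigma$ fixes $\iota(\X_r(K))$ pointwise for every $t$, and together with the second point this forces $\Phi_1^\sigma(\X_r(G))=\iota(\X_r(K))$ exactly. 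Fourth, joint continuity of $(x,t)\mapsto\Phi_t^\sigma(x)$ holds because this map factors as $\sigma\times\mathrm{id}$ followed by $\Psi$ followed by $\pi_{G/K}$, each continuous. These four points say precisely that $\{\Phi_t^\sigma\}_{t\in[0,1]}$ is a strong deformation retraction of $\X_r(G)$ onto $\iota(\X_r(K))$. Finally, $\iota$ is continuous and $\sigma|_{\iota(\X_r(K))}$ is a continuous inverse for it, so $\iota$ identifies $\X_r(K)$ homeomorphically with the subspace $\iota(\X_r(K))\subseteq\X_r(G)$; thus $\X_r(G)$ strongly deformation retracts onto a copy of $\X_r(K)$, which completes the proof of this theorem, and hence of Theorem~\ref{thm:main}.

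The genuine content lies entirely in the inputs, not in the bookkeeping above. The one delicate verification — that the section $\sigma$ sends the embedded copy $\iota(\X_r(K))$ back inside $\X_r(K)$, rather than merely into $\R_r(G)/K$ — was already carried out in the proof of Proposition~\ref{section}, using the explicit $K$-invariant Hermitian form $\langle x,y\rangle=\tr{xy^{*}}$ on $\mathfrak{gl}(n,\C)^{\times r}$ to show that every $K$-valued representation satisfies the Kempf--Ness critical-point condition. Granting that, nothing is left to obstruct the argument; I expect the only friction to be notational, namely keeping the identification of $\X_r(K)$ with $\iota(\X_r(K))\subseteq\X_r(G)$ straight throughout, and making sure the descent in Proposition~\ref{Kequivariantdeformation} is applied so that $\Psi_t$ really does restrict to the identity on $\X_r(K)$.
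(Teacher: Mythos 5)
Your proposal is correct and is essentially the paper's own argument: both define $\Phi_t^{\sigma}=\pi_{G/K}\circ\Phi_t\circ\sigma$ (your $\Psi_t$ is the paper's $\Phi_t$ from Corollary \ref{defretract}) and verify the same three properties using $\pi_{G/K}\circ\sigma=\mathrm{id}$, $\sigma\circ\iota=\mathrm{id}$ on $\X_r(K)$, and $\pi_{G/K}|_{\X_r(K)}=\iota$. Your added remarks on joint continuity in $t$ and on $\iota$ being a homeomorphism onto its image are harmless extra bookkeeping, not a different route.
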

\begin{proof}
Our previous propositions establish that the following diagram is
commutative: \[
\xymatrix{\R_r(G)/K\ar[dd]^{\pi_{G/K}}\ar[rr]^{\Phi_{t}} &  & \R_r(G)/K\ar[dd]^{\pi_{G/K}}\\
& \X_r(K)\ar@{^{(}->}[ul]_{\mathrm{id}}\ar@{^{(}->}[ur]^{\mathrm{id}}\ar@{^{(}->}[dl]^{\iota}\ar@{^{(}->}[dr]_{\iota}\\
\X_r(G)\ar@{-->}@/^3pc/[uu]^{\sigma} &  & \X_r(G)}
\]

Define $\Phi_{t}^{\sigma}=\pi_{G/K}\circ\Phi_{t}\circ\sigma$. Then
since all composit maps are continuous, so is $\Phi_{t}^{\sigma}$.
We now verify the other properties of a strong deformation retraction.
Firstly, $\Phi_{0}^{\sigma}$ is the identity since $\Phi_{0}=\mathrm{id}$
and $\pi_{G/K}\circ\sigma=\mathrm{id}$.

Next, we show $\Phi_{1}^{\sigma}$ is into $\iota(\X_r(K))$.
Since $\Phi_{1}(\R_r(G)/K)\subset \X_r(K)$, it follows
$W=\Phi_{1}\left(\sigma(\X_r(G))\right)\subset \X_r(K)$.
Moreover, $\pi_{G/K}=\iota$ on $\X_r(K)$, so $\pi_{G/K}(W)=\iota(W)\subset\iota(\X_r(K))$.

Lastly, we show that for all $t$, $\Phi_{t}^{\sigma}$ is the identity
on $\iota(\X_r(K))$. In deed, we have shown that $(\sigma\circ\iota)=\mathrm{id}$
on $\X_r(K)\subset \R_r(G)/K$. For all $t$, $\Phi_{t}$
is the identity on $\X_r(K)$. Again, using the fact that $\pi_{G/K}=\iota$
on $\X_r(K)$, we have for any point $[\psi]\in \X_r(K)$,
\[
\iota([\psi])\mapsto\sigma(\iota([\psi]))=[\psi]\mapsto\Phi_{t}([\psi])=[\psi]\mapsto\pi_{G/K}([\psi])=\iota([\psi]),\]
 as was to be shown. 
\end{proof}

\subsection{A brief word about symplectic reduction}
As before, let $K$ be a compact Lie group and $G=K_{\C}$ its complexification.  
The polar mapping $K\times \mathfrak{k} \to G$ is a $K$-biinvariant diffeomorphism.  
Let $TK$ be the tangent bundle of $K$.  
The mapping $TK\to K\times \mathfrak{k} \to G$ of the inverse $TK \to K\times \mathfrak{k}$ of left translation with the polar map is a diffeomorphism.  Consequently, $\R_r(G)\cong G^{\times r}\cong (TK)^{\times r}\cong T(\R_r(K))$ is the tangent bundle over $\R_r(K)\cong K^{\times r}$.  Therefore $\R_r(G)$ is a symplectic manifold by identifying $T(\R_r(K))$ with $T^*(\R_r(K))$, and since it also is a complex algebraic set it is therefore K\"ahler.

The action of $K_{\C}$ on $\R_r(G)$ by conjugation is holomorphic and the restriction to the action of $K$ is Hamiltonian.  According to \cite{Hu1} the momentum mapping $\mu: G=K\times \mathfrak{k}\to \mathfrak{k}\cong \mathfrak{k}^*$ is the mapping $\mu(x,Y)=\mathrm{Ad}_xY-Y$.  Consequently, since the action on $\R_r(G)\cong G^{\times r}$ is diagonal the momentum mapping $\mu: \R_r(G)\cong(K\times \mathfrak{k})^{\times r}\to \mathfrak{k}\cong \mathfrak{k}^*$ is the mapping $$\mu((x_1,...,x_r),(Y_1,...,Y_r))=\sum_{i=1}^r\mathrm{Ad}_{x_i}Y_i-Y_i.$$  Then by Proposition 4.2 of \cite{Hu2} the reduced space $\mu^{-1}(0)/K\cong \X_r(G)$ inherits a \emph{stratified K\"ahler structure}.

The real stratified symplectic structure on $\X_r(G)$ comes from the generalization by Lerman and Sjamaar \cite{LS} of the Marsden-Weinstein symplectic reduction \cite{MW} to a stratified structure when 0 is not a regular value of the momentum mapping.  Kirwan \cite{Ki} then showed using the Kempf-Ness theory already discussed that the null cone $\mu^{-1}(0)$ can be taken to be a Kempf-Ness set.  We mention this because from this point of view it is easier to see that the $K$-representations are a subset of the Kempf-Ness set, since $\R_r(K)$ sits, in this context, as the image of the zero section of the tangent bundle $\R_r(G)\cong T(\R_r(K))\to \R_r(K)$.  Then the momentum mapping is easily seen to map all such points to 0 (the values of all the $Y_i$'s are 0!).

It is also interesting to note that the complex dimension of $\X_r(G)$ can be generally odd, so the K\"ahler structure is not generally complex symplectic.  However, it is generally know that the top stratum of $\X_r(G)$ is foliated by complex symplectic manifolds \cite{La2}.  This begs the question, since we just have seen it to be real symplectic globally (on the top stratum): how does the foliated complex symplectic structure relate to the global real symplectic structure?  We leave this interesting question for future work.

\section{Topological Manifolds $\X_r(\SUm{n})$}
\subsection{The Trivial Case}
For $G=\SLm{1}$ both $G$ and $K$ are single points and the corresponding character spaces are single points as well for all values of $r$.

\subsection{Rank 1 Case for all $n$}

\begin{thm}
$\SUm{n}/ \SUm{n}$ is homeomorphic to a closed real ball of real dimension $n-1$.  In particular, they are manifolds with boundary.
\end{thm}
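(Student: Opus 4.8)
# Proof Proposal

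\textbf{Approach.} The plan is to reduce the conjugation action of $\SUm{n}$ on itself to an action on a maximal torus via the spectral theorem, and then identify the resulting quotient with a fundamental domain for the Weyl group acting on the torus. Concretely, every element of $\SUm{n}$ is conjugate (in $\SUm{n}$) to a diagonal matrix with eigenvalues on the unit circle whose product is $1$; two diagonal matrices are $\SUm{n}$-conjugate if and only if their diagonal entries differ by a permutation. Hence $\SUm{n}/\SUm{n} \cong \mathrm{T}/W$, where $\mathrm{T} \cong (S^1)^{n-1}$ is the maximal torus of diagonal matrices in $\SUm{n}$ and $W \cong S_n$ is the symmetric group acting by permuting coordinates. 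The job is then to show $\mathrm{T}/W$ is homeomorphic to a closed ball of dimension $n-1$.

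\textbf{Key steps.} First I would write $\mathrm{T} = \{(e^{2\pi i\theta_1},\dots,e^{2\pi i\theta_n}) : \sum \theta_j \in \mathbb{Z}\}$ and pass to the Lie algebra picture: $\mathrm{T}$ is the quotient of the hyperplane $\{\,\theta \in \mathbb{R}^n : \sum \theta_j = 0\,\} \cong \mathbb{R}^{n-1}$ by the coweight lattice, and $W = S_n$ acts by permuting the $\theta_j$. A fundamental domain for the combined action of $W$ and the translation lattice on $\mathbb{R}^{n-1}$ is the closed fundamental alcove of the affine Weyl group of type $A_{n-1}$, which is a simplex $\Delta^{n-1}$ (cut out by the inequalities $\theta_1 \geq \theta_2 \geq \cdots \geq \theta_n$ together with $\theta_1 - \theta_n \leq 1$). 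Second, I would invoke the standard fact that the quotient of $\mathrm{T}$ by $W$ is homeomorphic to this alcove: the quotient map restricted to $\Delta^{n-1}$ is a continuous bijection onto $\mathrm{T}/W$, and since $\Delta^{n-1}$ is compact and $\mathrm{T}/W$ is Hausdorff (it is the $K$-character space $\X_1(\SUm{n})$, compact Hausdorff by the discussion in Section 2), it is a homeomorphism. Third, since a closed $(n-1)$-simplex is homeomorphic to a closed $(n-1)$-ball, the result follows, and the boundary of the simplex maps to the image of the non-regular (non-principal-orbit) locus, exhibiting $\SUm{n}/\SUm{n}$ as a manifold with boundary. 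Alternatively, and perhaps cleaner for the exposition, one can use the coefficients of the characteristic polynomial: the map sending $A \in \SUm{n}$ to $(c_1(A),\dots,c_{n-1}(A))$ where $c_k$ are the elementary symmetric functions of the eigenvalues (note $c_n = \det = 1$ is fixed, and $c_{n-k} = \overline{c_k}$ since eigenvalues come in conjugate-reciprocal pairs on the unit circle) is a continuous injection on conjugacy classes; its image is a compact semi-algebraic subset of $\mathbb{C}^{\lfloor (n-1)/2\rfloor} \times (\text{a real slice})$, and one identifies this image with a ball.

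\textbf{Main obstacle.} The conceptual content — that $\SUm{n}/\SUm{n} \cong \mathrm{T}/W$ and that this is a simplex — is classical (it is essentially the statement that the fundamental alcove parametrizes conjugacy classes in a compact simply connected Lie group). The real work is making the homeomorphism with a \emph{ball} precise and, more importantly, verifying the \emph{manifold-with-boundary} claim: one must check that near each face of the alcove the quotient is locally Euclidean-with-boundary, which requires understanding the local structure of $\mathrm{T}/W$ at points with nontrivial stabilizer. For type $A_{n-1}$ the stabilizer of a point in a codimension-$j$ face is a parabolic subgroup $\cong S_{j+1}$ (or a product thereof) acting on a transverse slice, and the quotient of $\mathbb{R}^j$ by $S_{j+1}$ permuting coordinates in a hyperplane is again a simplicial cone — hence locally a half-space only in the codimension-one case and a corner otherwise; but corners of a simplex are still homeomorphic (not diffeomorphic) to corners of a half-space-stratified ball, so topologically the manifold-with-boundary statement holds. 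I expect the cleanest writeup to simply cite the alcove description of conjugacy classes in $\SUm{n}$ (e.g. from standard Lie theory references) and then note that a simplex is a ball whose manifold-with-boundary structure is standard.
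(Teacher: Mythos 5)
Your argument is essentially the paper's own: the paper also identifies $\SUm{n}/\SUm{n}$ with the fundamental alcove $\mathfrak{a}=\{\lambda_1\geq\cdots\geq\lambda_n,\ \sum\lambda_i=0,\ \lambda_1-\lambda_n\leq 1\}$ via the eigenvalue map and observes that this simplex is a closed $(n-1)$-ball, citing \cite{AW} (and \cite{DK} for the general simply connected case). Your additional care with the compact-to-Hausdorff bijection and the local structure at the alcove's faces is fine but not a different route.
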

\begin{proof}
Let $\mathfrak{k}=\mathfrak{su}(n)$ be the Lie algebra of $K=\mathrm{SU}(n)$ and $$\mathfrak{h}=\{(\lambda_1,...,\lambda_n)\in \mathbb{R}^n\ | \ \lambda_1+\cdots +\lambda_n=0\}$$ be its Cartan subalgebra.  Then let
$$\mathfrak{h}_+=\{(\lambda_1,...,\lambda_n)\ | \ \lambda_i\geq \lambda_{i+1},\ 1\leq i\leq n-1\}$$ be a closed positive Weyl chamber, and then let $$\mathfrak{a}=\{\lambda \in \mathfrak{h}_+\ |\  \lambda_1-\lambda_n\leq 1 \}$$ be the fundamental alcove.  For any $k\in K$ it eigenvalues may be written $(e^{2\pi i\lambda_1(k)},...,e^{2\pi i\lambda_n(k)})$ where $\lambda(k)=(\lambda_1(k),...,\lambda_n(k))\in \mathfrak{a}$.  The map $k\mapsto \lambda(k)$ induces a homeomorphism $\mathfrak{a}\cong K/K$.  The alcove $\mathfrak{a}$ is topologically a $n-1$ ball with boundary.
\end{proof}

We note that the above argument comes from \cite{AW}.

For example, $\SUm{2}/\SUm{2}\cong [-2,2]$ and $\SUm{3}/\SUm{3}$ is homeomorphic to a disc (see below).

\begin{figure}[ht!]
\begin{center}
\includegraphics[scale=0.5]{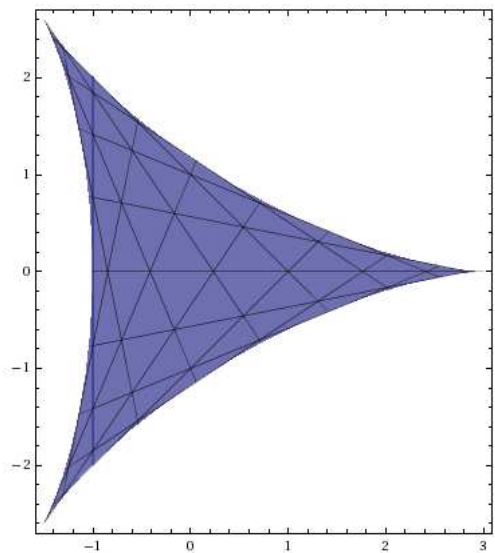}
\caption{$\SUm{3}/\SUm{3}$}\label{su3rank1}
\end{center}
\end{figure}

It is worth describing these cases more explicitly.  The alcove for the case $\SUm{2}$ is determined by $\lambda_1+\lambda_2=0$ and $0\leq \lambda_1-\lambda_2 \leq 1$.  The trace function is real in this case and is equal to $\cos(2\pi\lambda_1)+\cos(2\pi\lambda_2)$ since every matrix in $\SUm{2}$ is conjugate to one in the form 
$$\mathrm{diag}( e^{2 \pi i \lambda_1}, e^{2 \pi i \lambda_1})=\left( \begin{array}{cc} e^{2 \pi i \lambda_1} & 0 \\ 0 & e^{2 \pi i \lambda_2} \end{array} \right),$$
and thus the trace gives a bijection between the alcove and the interval $[-2,2]$.

For $\SUm{3}$ the alcove is determined by $\lambda_1+\lambda_2+\lambda_3=0$ and $\lambda_1\geq \lambda_2\geq \lambda_3 \geq \lambda_1-1$.  In this case each $\SUm{3}$ matrix $\xb$ has its trace given by $e^{2\pi i \lambda_1}+e^{2\pi i \lambda_2}+e^{2\pi i\lambda_3}$.  The coefficients of the characteristic polynomial (which generate the complex invariant ring) are $\tr{\xb}$ and $\tr{\xb^{-1}}=\tr{\overline{\xb}}$, so the real and imaginary parts of the trace function generate the real invariant ring.  Let $p_1=\cos(2 \pi \lambda_1)+\cos(2 \pi \lambda_2)+\cos(2 \pi \lambda_3)$ and $p_2=\sin(2 \pi \lambda_1)+\sin(2 \pi \lambda_2)+\sin(2 \pi\lambda_3)$.  Then $(p_1,p_2)$ gives the isomorphism between the alcove and the region in Figure \ref{su3rank1}.

The region in Figure \ref{su3rank1} can be equally described by \begin{equation}\label{discriminant} 
|\tau|^4-8\Re(\tau^3)+18|\tau|^2-27\leq 0\end{equation} where $\tau=\tr{X}$ (see \cite{G10} page 205).  
This follows since this polynomial is exactly $-1$ times the discriminant of the characteristic polynomial,
that is, the polynomial whose zeros occur exactly when there is a repeated eigenvalue.  
Thus, in Figure \ref{su3rank1}, the boundary corresponds exactly to the conjugation classes
having a representative with a repeated eigenvalue and the interior is where there are three distinct eigenvalues. 
The three corners correspond to the three cubic roots of unity which form the center of $\SUm{3}$, 
by scalar multiples of the identity matrix.

\begin{rem}
On page 168 of \cite{DK}, it is shown that for any simply connected compact Lie group $K$ the Weyl alcove is homeomorphic to $K/K$ where $K$ acts on itself by conjugation $($adjoint action$)$.  Moreover, in this generality any alcove is still homeomorphic to a closed ball.
\end{rem}

\subsection{$\SUm2$ rank 2}

In this section we show $\SUm2^{\times2}/\SUm2$ is a 3-ball and in
the next section we show $\SUm2^{\times3}/\SUm{2}$ is a 6-sphere.
We note that both results appear in \cite{BC}, but our proofs differ.

Let $K=\SUm{2}$. A general element of $K$ will be written as

\[
\left(\begin{array}{cc}
\alpha & \beta\\
-\bar{\beta} & \bar{\alpha}\end{array}\right),\quad\quad\alpha,\beta\in\mathbb{C}\mbox{ such that }|\alpha|^{2}+|\beta|^{2}=1.\]
 We will denote $\alpha=a+ib$, $\beta=c+id$ and we will also view
elements of $K$ as unit quaternions by writing as well\[
g=\alpha+\beta j=a+bi+cj+dk,\quad\quad a^{2}+b^{2}+c^{2}+d^{2}=1,\]
where $\{1,i,j,k\}$ is the usual basis of quaternions over $\mathbb{R}$,
satisfying $k=ij$. Note that $\alpha j=j\bar{\alpha}$ for a complex
$\alpha\in\mathbb{C}$. If $g=a+bi+cj+dk\in K$, we will also use
the notation $a=\Re g$ and $(b,c,d)=\Im g\in\mathbb{R}^{3}$, called
the real and imaginary parts of $g$, respectively. In this setting,
the inverse of $g$ is also the conjugate quaternion \[
g^{-1}=\bar{g}=a-bi-cj-dk=\bar{\alpha}-\beta j,\]
 since elements of $K$ satisfy\[
g\bar{g}=\bar{g}g=a^{2}+b^{2}+c^{2}+d^{2}=1.\]
 We start with the following lemma.

\begin{lem}
\label{pro:K2}There is a one-to-one correspondence between:

\textup{(i)} pairs $(X_{1},X_{2})$ of $\SUm{2}$ matrices such that
$a_{i}=\Re(X_{i})$, $i=1,2$ and $a_{3}=\Re(X_{1}^{-1}X_{2})$, up
to simultaneous conjugation;

\textup{(ii)} triples $(a_{1},a_{2},a_{3})\in[-1,1]^{3}$ such that\begin{equation}
1-a_{1}^{2}-a_{2}^{2}-a_{3}^{2}+2a_{1}a_{2}a_{3}\in[0,1].\label{eq:sigma}\end{equation}

\end{lem}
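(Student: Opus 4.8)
The plan is to use the quaternionic description of $\SUm{2}$ to turn the conjugation problem into a concrete problem about vectors in $\mathbb{R}^3$, and then identify the semialgebraic inequality as the condition for a Gram-type matrix to be positive semidefinite. First I would recall that for $g=a+bi+cj+dk\in\SUm{2}$ the real part $\Re(g)=a$ is a conjugation invariant, and that $\Re(X_1^{-1}X_2)$ records the ``angle'' between the imaginary vectors of $X_1$ and $X_2$; precisely, writing $X_i = a_i + \vec{v}_i$ with $\vec v_i\in\mathbb R^3$ and $|\vec v_i|^2 = 1-a_i^2$, quaternion multiplication gives $\Re(X_1^{-1}X_2) = a_1a_2 + \vec v_1\cdot\vec v_2$, so $a_3 = a_1a_2 + \vec v_1\cdot \vec v_2$. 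Thus specifying $(a_1,a_2,a_3)$ is the same as specifying $a_1,a_2$ together with the inner product $\vec v_1\cdot\vec v_2 = a_3 - a_1a_2$ of two vectors of prescribed lengths.

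Next I would handle the correspondence in both directions. For (i)$\Rightarrow$(ii): given the pair $(X_1,X_2)$, the numbers $a_1,a_2,a_3$ are manifestly in $[-1,1]$ (real parts of unit quaternions), and they are conjugation invariant since $\Re$ is a class function and $(kX_1k^{-1})^{-1}(kX_2k^{-1}) = k(X_1^{-1}X_2)k^{-1}$. The content is the inequality \eqref{eq:sigma}: one computes that the $2\times2$ Gram matrix of $\vec v_1,\vec v_2$, namely $\begin{pmatrix}1-a_1^2 & a_3-a_1a_2\\ a_3-a_1a_2 & 1-a_2^2\end{pmatrix}$, is positive semidefinite, so its determinant is $\geq 0$; expanding the determinant gives exactly $1-a_1^2-a_2^2-a_3^2+2a_1a_2a_3 \geq 0$. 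The upper bound $\leq 1$ follows from $a_1^2+a_2^2+a_3^2 \geq 2a_1a_2a_3$, which is a consequence of AM–GM type estimates on $[-1,1]^3$ (e.g.\ $a_1^2+a_2^2 \geq 2|a_1a_2| \geq 2a_1a_2a_3$ when $|a_3|\leq1$), so the whole expression is at most $1 - (a_1^2+a_2^2+a_3^2) + a_1^2+a_2^2+a_3^2 \le 1$; I would double-check this last inequality carefully. For (ii)$\Rightarrow$(i): given $(a_1,a_2,a_3)$ satisfying \eqref{eq:sigma}, the Gram matrix above is positive semidefinite, hence realizable as the Gram matrix of two vectors $\vec v_1,\vec v_2\in\mathbb R^3$ with $|\vec v_i|^2=1-a_i^2$ and $\vec v_1\cdot\vec v_2 = a_3-a_1a_2$; set $X_i = a_i+\vec v_i$. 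This produces the pair; for well-definedness up to simultaneous conjugation I would invoke the fact that $\SUm{2}\cong\mathrm{Spin}(3)$ acts on $\mathbb R^3\cong \mathfrak{su}(2)$ by $\mathrm{SO}(3)$ rotations, and any two ordered pairs of vectors with the same pairwise inner products (hence the same Gram matrix) differ by an element of $\mathrm{O}(3)$; one checks a rotation (not just an orthogonal map) suffices because when the two vectors are independent the sign of the missing third coordinate can be absorbed, and when they are dependent the configuration is rotation-homogeneous anyway, so the pair is unique up to simultaneous $\SUm{2}$-conjugation.

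The main obstacle I expect is precisely this last point: showing that the triple $(a_1,a_2,a_3)$ is a \emph{complete} conjugation invariant, i.e.\ that two pairs with the same $(a_1,a_2,a_3)$ are genuinely $\SUm{2}$-conjugate (not merely $\mathrm{O}(3)$-related after passing to $\mathbb R^3$), and dealing cleanly with the degenerate strata where $\vec v_1,\vec v_2$ are parallel or one of them vanishes (the boundary cases $a_i=\pm1$ or equality in \eqref{eq:sigma}). I would organize this by treating the generic case (where $X_1,X_2$ generate an irreducible representation, equivalently $\vec v_1,\vec v_2$ linearly independent) via the $\mathrm{SO}(3)$-transitivity on such configurations, and then checking the reducible/degenerate cases by hand — there the representation is conjugate into the diagonal torus and the $a_i$ determine the eigenvalues directly. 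The bound $\le 1$ in \eqref{eq:sigma}, while elementary, also deserves a careful one-line justification rather than being asserted.
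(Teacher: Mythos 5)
Your proposal is correct, but it proceeds along a genuinely different route from the paper. The paper derives the inequality \eqref{eq:sigma} in one stroke from the Fricke trace identity: it computes $\Re(X_1X_2X_1^{-1}X_2^{-1})=2(a_1^2+a_2^2+a_3^2)-4a_1a_2a_3-1$ and observes this lies in $[-1,1]$, which gives both bounds simultaneously (your Gram determinant and the AM--GM-type estimate are a correct but two-step substitute; note that the determinant $(1-a_1^2)(1-a_2^2)-(a_3-a_1a_2)^2$ expands to exactly the Fricke expression). For the converse, the paper does not argue abstractly via realizability of a positive semidefinite Gram matrix: it writes an explicit ansatz $X_1=a_1+b_1i$, $X_2=a_2+b_2i+c_2j$, solves the resulting system recursively for $b_1,b_2,c_2$ (with the degenerate case $b_1=0$ handled separately), and observes that realness of $c_2$ is precisely condition \eqref{eq:sigma}; uniqueness up to conjugation is then deduced by tracking the finitely many sign choices in that explicit solution. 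Your $\mathrm{SU}(2)\to\mathrm{SO}(3)$ argument for completeness of the invariants --- same Gram matrix implies $\mathrm{O}(3)$-equivalence, and a rotation suffices because two vectors span at most a plane in $\mathbb{R}^3$ --- is a clean and fully rigorous replacement for the paper's terse ``simple computation,'' and it handles the reducible/degenerate strata uniformly. What the paper's explicit normal form buys, and yours does not, is a concrete slice that is reused almost verbatim in the rank-3 case (Proposition \ref{K3}), where explicit matrix solutions and the sign of an extra coordinate $c_3$ drive the identification of $\X_3(\SUm{2})$ with $S^6$; your conceptual argument is self-contained for this lemma but would need to be supplemented by such a construction later.
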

\begin{proof}
Let $X_{1},X_{2}\in\SUm{2}$ and $X_{3}=X_{1}^{-1}X_{2}$ with $a_{j}=\Re X_{j}$,
$j=1,2,3$. Then, the known formula of Fricke \[
\tr{A^{-1}BAB^{-1}}=\tr{A}^{2}+\tr{B}^{2}+\tr{A^{-1}B}^{2}-\tr{A}\tr{B}\tr{A^{-1}B}-2\]
 (see \cite{G9}, for instance) translates into \[
\Re(X_{1}X_{2}X_{1}^{-1}X_{2}^{-1})=2(a_{1}^{2}+a_{2}^{2}+a_{3}^{2})-4a_{1}a_{2}a_{3}-1\in[-1,1],\]
 and so we obtain the condition (\ref{eq:sigma}). Conversely, let
$(a_{1},a_{2},a_{3})\in[-1,1]^{3}$ satisfy $1-a_{1}^{2}-a_{2}^{2}-a_{3}^{2}+2a_{1}a_{2}a_{3}\in[0,1]$
and consider two $\SUm{2}$ matrices of the form: \begin{equation}
X_{1}=\alpha_{1}=a_{1}+b_{1}i,\quad X_{2}=\alpha_{2}+c_{2}j=a_{2}+b_{2}i+c_{2}j.\label{eq:ansatz}\end{equation}
Let $X_{3}=X_{1}^{-1}X_{2}=\bar{\alpha_{1}}(\alpha_{2}+c_{2}j)=\bar{\alpha_{1}}\alpha_{2}+\alpha_{1}c_{2}j$
and so \[
\Re(X_{3})=\Re(\bar{\alpha_{1}}\alpha_{2})=a_{1}a_{2}+b_{1}b_{2}.\]
Thus, given $a_{1},a_{2}$ and $a_{3}$ in $[-1,1]$, we need to find
real numbers $b_{1},b_{2}$ and $c_{2}$ (also in $[-1,1]$) such
that:\begin{eqnarray}
a_{1}^{2}+b_{1}^{2} & = & 1\nonumber \\
a_{2}^{2}+b_{2}^{2}+c_{2}^{2} & = & 1\label{eq:eqs2}\\
a_{1}a_{2}+b_{1}b_{2} & = & a_{3}.\nonumber \end{eqnarray}
We can solve recursively, obtaining $b_{1}=\sqrt{1-a_{1}^{2}}$, and
let us assume for now that $b_{1}>0$. Then \[
b_{2}=\frac{a_{3}-a_{1}a_{2}}{b_{1}}=\frac{a_{3}-a_{1}a_{2}}{\sqrt{1-a_{1}^{2}}}\]
 and \[
c_{2}=\sqrt{1-a_{2}^{2}-b_{2}^{2}}=\frac{\sqrt{1-a_{1}^{2}-a_{2}^{2}-a_{3}^{2}+2a_{1}a_{2}a_{3}}}{\sqrt{1-a_{1}^{2}}}\]
 is a solution with $b_{1},b_{2},c_{2}$ real because of condition
(\ref{eq:sigma}). In the case $b_{1}=0$ we just take $c_{2}=0$
and $b_{2}=\sqrt{1-a_{2}^{2}}>0$. To complete the proof, we just
need to show that any two pairs $(X_{1},X_{2})$ of the form (\ref{eq:ansatz})
verifying the system (\ref{eq:eqs2}) are conjugate. This is a consequence
of the fact that the solution provided is unique up to the choices
of signs in the square roots. It is then a simple computation to show
that different choices give conjugate pairs.
\end{proof}

\begin{rem}
\label{rem:JW}Let $\theta_{1},\theta_{2}$ and $\theta_{3}$ be three
real numbers in $[0,1]$, related to the variables $a_{i}$ by $a_{i}=\cos(\pi\theta_{i})$,
$i=1,...,3$. Then, as shown in \cite{JW} both conditions of the
lemma are equivalent to \begin{eqnarray}
\theta_{i}+\theta_{j}-\theta_{k} & \geq & 0,\qquad\textrm{for all}\ i,j,k\in\{1,2,3\}\nonumber \\
\theta_{1}+\theta_{2}+\theta_{3} & \leq & 2.\label{eq:ineq}\end{eqnarray}
Note that these inequalities define a 3-dimensional compact tetrahedron.
\end{rem}

\begin{thm}
Let $K=\SUm{2}$. The topological space $K^{\times2}/K$ is homeomorphic to a 3 dimensional closed ball. 
\end{thm}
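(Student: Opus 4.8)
The plan is to use Lemma~\ref{pro:K2} to realize $K^{\times 2}/K$ concretely as a semialgebraic subset of $\mathbb{R}^3$ and then show that subset is homeomorphic to a closed $3$-ball. First I would recall that the real coordinate ring $\mathbb{R}[\R_2(\SUm{2})]^{\SUm{2}}$ is generated by the real parts of the trace functions (the imaginary parts vanishing for $\SUm{2}$ since $\tr{X}$ is real there), so the three functions $a_1 = \Re(X_1)$, $a_2 = \Re(X_2)$, $a_3 = \Re(X_1^{-1}X_2)$ separate conjugation orbits and embed $K^{\times 2}/K$ into $\mathbb{R}^3$. By Lemma~\ref{pro:K2} the image is exactly the set $B = \{(a_1,a_2,a_3)\in[-1,1]^3 : 1 - a_1^2 - a_2^2 - a_3^2 + 2a_1a_2a_3 \in [0,1]\}$, and since $K^{\times 2}/K$ is compact, the bijection $K^{\times 2}/K \to B$ is a homeomorphism. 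So the theorem reduces to the purely topological claim that $B$ is homeomorphic to a closed $3$-ball.

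Next I would analyze $B$ directly. The condition $1 - a_1^2 - a_2^2 - a_3^2 + 2a_1a_2a_3 \geq 0$ is the familiar one cutting out the region where the symmetric matrix $\begin{pmatrix} 1 & a_3 & a_2 \\ a_3 & 1 & a_1 \\ a_2 & a_1 & 1 \end{pmatrix}$ is positive semidefinite (its determinant is exactly $1 - a_1^2 - a_2^2 - a_3^2 + 2a_1a_2a_3$, and on $[-1,1]^3$ the leading principal minors are already nonnegative), and this region is known to be convex — it is a ``spectrahedron,'' the so-called elliptope. The upper bound $1 - a_1^2 - a_2^2 - a_3^2 + 2a_1a_2a_3 \leq 1$ is automatic on a neighborhood of the origin but genuinely cuts the set near the corners; however, using the substitution of Remark~\ref{rem:JW}, $a_i = \cos(\pi\theta_i)$ with $\theta_i \in [0,1]$, the two conditions together become the linear inequalities $\theta_i + \theta_j - \theta_k \geq 0$ and $\theta_1+\theta_2+\theta_3 \leq 2$, which define a solid tetrahedron $T$ in $\theta$-space. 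The map $\theta \mapsto (\cos\pi\theta_1, \cos\pi\theta_2, \cos\pi\theta_3)$ is a continuous bijection from $T$ onto $B$ (injective because $\cos$ is injective on $[0,1]$), and $T$ is compact, so it is a homeomorphism $T \cong B$.

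Finally, a solid tetrahedron is a compact convex body in $\mathbb{R}^3$ with nonempty interior, hence homeomorphic to a closed $3$-ball by the standard radial-projection argument (pick an interior point, normalize rays to the boundary). Stringing these homeomorphisms together gives $K^{\times 2}/K \cong B \cong T \cong D^3$, as claimed. The main obstacle I anticipate is the verification that the $\theta$-parametrization is genuinely a bijection onto $B$ — one must check that every point of $B$ lies in the image (i.e., that the alcove/tetrahedron inequalities are \emph{equivalent} to, not merely implied by, the semialgebraic conditions), which is precisely the content cited from \cite{JW} in Remark~\ref{rem:JW}; alternatively one can bypass this by arguing directly that the elliptope intersected with the slab is star-shaped about the origin and has the $2$-sphere as boundary, but the tetrahedron route is cleaner. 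Everything else is routine point-set topology once the concrete model is in hand.
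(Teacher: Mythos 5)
Your proposal is correct and follows essentially the same route as the paper: use Lemma \ref{pro:K2} to identify $K^{\times 2}/K$ with the semialgebraic set $B\subset[-1,1]^3$, then apply the substitution $a_i=\cos(\pi\theta_i)$ of Remark \ref{rem:JW} to identify $B$ with the solid tetrahedron, hence with a closed $3$-ball. Your added details (the compact-to-Hausdorff argument making the bijection a homeomorphism, and the elliptope observation) are harmless elaborations of the same argument rather than a different approach.
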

\begin{proof}
First, observe that the map $\phi:[0,1]^{3}\to[-1,1]^{3}$ sending
the coordinates $\theta_{i}$ to the coordinates $a_{i}$ as above
is a homeomorphism. Since the domain of the inequalities (\ref{eq:ineq})
is a tetrahedron, hence homeomorphic to a 3-ball, the condition $1-a_{1}^{2}-a_{2}^{2}-a_{3}^{2}-2a_{1}a_{2}a_{3}\in[0,1]$
also defines a 3-ball by the lemma. The result then follows from the
fact that any 3 numbers $(a_{1},a_{2},a_{3})\in[-1,1]^{3}$ satisfying
$1-a_{1}^{2}-a_{2}^{2}-a_{3}^{2}-2a_{1}a_{2}a_{3}\in[0,1]$ correspond
in a one-to-one fashion to a pair of elements $(g_{1},g_{2})\in K^{2}$
up to simultaneous conjugation, as shown in the lemma. 
\end{proof}
Note that the homeomorphism $\phi$ sends the boundary of tetrahedron
to a space which is a 2 sphere smooth except at the 4 points $(1,1,1),(1,-1-1),(-1,1,-1)$
and $(-1,-1,1)$.

\begin{figure}[ht!]

\begin{centering}
\includegraphics[scale=0.5]{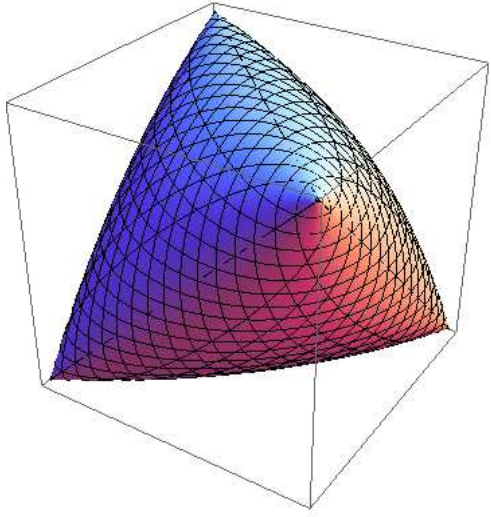} 
\par\end{centering}

\caption{$\SUm{2}^{\times2}/\SUm{2}$}

\end{figure}

\subsection{$\SUm{2}$ rank 3}

To describe $K^{\times3}/K$ for $K=\SUm{2}$ let us consider the
map:\begin{eqnarray*}
\mathcal{T}:K^{\times3}/K & \to & [-1,1]^{\times6}\\
\left[(X_{1},X_{2},X_{3})\right] & \mapsto & (\Re A_{1},...,\Re A_{3},\Re(A_{1}^{-1}A_{2}),...,\Re(A_{2}^{-1}A_{3})).\end{eqnarray*}
 Given a pair of $\SUm{2}$ matrices $(X_{1},X_{2})$ let us define
the following expression, which represents the cosine of the angle
between the vectors corresponding to $X_{i}$ and is clearly invariant
under simultaneous conjugation: \[
l_{12}=\frac{\Re(X_{1}^{-1}X_{2})-\Re(X_{1})\Re(X_{2})}{||\Im X_{1}||\ |||\Im X_{2}||}.\]

\begin{prop}
\label{K3}The image of $\mathcal{T}$ is defined by the following
4 inequalities:

\textup{(i)} For every two distinct indices $i,j\in\{1,2,3\}$\[
1-a_{i}^{2}-a_{j}^{2}-a_{ij}^{2}+2a_{i}a_{j}a_{ij}\in[0,1],\quad\quad\mbox{and }\]

\textup{(ii)} \[
1-a_{12}^{2}-a_{13}^{2}-a_{23}^{2}+2a_{12}a_{13}a_{23}\in[0,1].\]
 Conversely, if $a_{1},a_{2},a_{3},a_{12},a_{13},a_{23}$ are six
real numbers in $[-1,1]$ satisfying those 4 conditions, there are
three $\SUm{2}$ matrices $X_{1},X_{2}$ and $X_{3}$ such that $\mathcal{T}(X_{1},X_{2},X_{3})=(a_{1},...,a_{23})$.

Moreover, such triples $(X_{1},X_{2},X_{3})$ are unique up to conjugation
if and only if\begin{equation}
1-l_{12}^{2}-l_{13}^{2}-l_{23}^{2}+2l_{12}l_{13}l_{23}=0;\label{eq:delta}\end{equation}
 if this equation does not hold there are exactly 2 such triples. 
\end{prop}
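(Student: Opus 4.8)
The plan is to reduce the rank-3 case to repeated applications of Lemma~\ref{pro:K2} together with a careful ``rigidity'' analysis. The six real coordinates $a_1,a_2,a_3,a_{12},a_{13},a_{23}$ naturally split into three overlapping pairs-plus-difference triples: $(a_1,a_2,a_{12})$, $(a_1,a_3,a_{13})$, $(a_2,a_3,a_{23})$, each of which, by Lemma~\ref{pro:K2}, must satisfy condition~\eqref{eq:sigma} to come from a pair of $\SUm{2}$ matrices. This gives the three inequalities in part~(i). The fourth inequality, part~(ii), is the constraint among the three ``relative'' quantities $a_{12},a_{13},a_{23}$; geometrically, after normalizing the imaginary parts $\Im X_i$ to unit vectors in $\mathbb{R}^3$, the numbers $l_{ij}$ are the pairwise cosines of the angles between these three vectors, and three unit vectors in $\mathbb{R}^3$ exist with prescribed pairwise cosines $l_{ij}$ exactly when the Gram determinant $1-l_{12}^2-l_{13}^2-l_{23}^2+2l_{12}l_{13}l_{23}$ is $\geq 0$. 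I would show that, modulo the already-established conditions~(i), this Gram-determinant condition on the $l_{ij}$ is equivalent to the stated inequality~(ii) on the $a_{ij}$ --- this is a direct but slightly lengthy algebraic manipulation, substituting $l_{ij}=\frac{a_{ij}-a_ia_j}{\sqrt{1-a_i^2}\sqrt{1-a_j^2}}$ and clearing denominators.

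For the converse (realizability), I would mimic the explicit ansatz of Lemma~\ref{pro:K2}: put $X_1 = a_1 + b_1 i$ with $b_1=\sqrt{1-a_1^2}$, then $X_2 = a_2 + b_2 i + c_2 j$ with $b_2,c_2$ solved from the equations involving $a_2,a_{12}$ exactly as before. For $X_3 = a_3 + b_3 i + c_3 j + d_3 k$, the three scalar equations $|X_3|^2=1$, $\Re(X_1^{-1}X_3)=a_{13}$, $\Re(X_2^{-1}X_3)=a_{23}$ become a linear system for $(b_3,c_3)$ once we fix $a_3$, followed by $d_3^2 = 1 - a_3^2 - b_3^2 - c_3^2$; conditions~(i) for the pairs $(1,3)$ and $(2,3)$ plus condition~(ii) guarantee the relevant quantities under the square roots are nonnegative, so a real solution exists. (When $\Im X_1, \Im X_2$ are parallel or one vanishes, one handles the degenerate sub-cases separately, as in the lemma.)

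For the uniqueness-up-to-conjugation claim, the key observation is that simultaneous conjugation by $K=\SUm{2}$ acts on the triple $(\Im X_1, \Im X_2, \Im X_3) \in (\mathbb{R}^3)^3$ as $\mathrm{SO}(3)$ acting diagonally. If the three imaginary-part vectors are linearly independent --- which is exactly the condition that the Gram determinant~\eqref{eq:delta} is \emph{nonzero} (here strictly positive, since it is a Gram determinant of an independent set) --- then $\mathrm{SO}(3)$ acts simply transitively on ordered bases with a fixed Gram matrix, so the configuration, hence the triple $(X_1,X_2,X_3)$, is determined up to a single $K$-conjugation: thus uniqueness fails precisely in this non-degenerate situation... wait, that is backwards from the statement, so let me re-read: the statement says uniqueness holds iff \eqref{eq:delta} $=0$, i.e. iff the three imaginary parts are \emph{dependent} (coplanar or collinear). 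That is the correct reading: when the vectors span only a plane (or a line), the $\mathrm{SO}(3)$-stabilizer of the plane still has a residual $\mathbb{Z}/2$ (reflection through the plane, realized in $\mathrm{SO}(3)$ as rotation by $\pi$ about the normal), but this reflection reverses orientation \emph{within} the plane... the cleaner way to see it: generically the two sign-choices for $d_3$ (namely $\pm\sqrt{1-a_3^2-b_3^2-c_3^2}$) give genuinely non-conjugate triples, and they coincide (so uniqueness holds) exactly when $d_3=0$, which unwinds to $1-l_{12}^2-l_{13}^2-l_{23}^2+2l_{12}l_{13}l_{23}=0$. I would make this precise by computing $d_3^2$ in terms of the $l_{ij}$ and checking it equals (a positive multiple of) the Gram determinant~\eqref{eq:delta}, then arguing that $d_3 \mapsto -d_3$ is not a conjugation when $d_3\neq 0$ (e.g. by exhibiting a conjugation-invariant --- such as the sign of $\det[\Im X_1\,|\,\Im X_2\,|\,\Im X_3]$, which is $\mathrm{SO}(3)$-invariant --- that distinguishes them).

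\textbf{Main obstacle.} The genuinely delicate point is the uniqueness/``exactly 2 triples'' dichotomy: one must correctly identify the conjugation action on imaginary parts as $\mathrm{SO}(3)$ (not $\mathrm{O}(3)$), track the residual stabilizer in the coplanar case, and verify that the two branches $d_3 = \pm\sqrt{\,\cdot\,}$ really are inequivalent --- this requires producing a conjugation-invariant (the signed volume $\det[\Im X_1|\Im X_2|\Im X_3]$) that separates them, and checking this invariant is exactly what vanishes when \eqref{eq:delta} holds. The realizability and the four inequalities, by contrast, are a bookkeeping exercise in solving quadratic systems and matching the Gram-determinant condition to \eqref{eq:delta} after clearing denominators; I expect those to be routine though not short.
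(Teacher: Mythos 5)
Your overall strategy is the same as the paper's: conditions (i) come from applying Lemma \ref{pro:K2} to the three pairs, the converse is handled by an explicit ansatz solved coordinate by coordinate, and the uniqueness dichotomy comes from the two sign choices for the last coordinate, which coincide exactly when the Gram determinant of the normalized imaginary parts (the paper's $t_{123}$) vanishes; your $\mathrm{SO}(3)$/signed-volume discussion of why the two branches are non-conjugate is a reasonable way to make precise what the paper leaves implicit. One small difference: the paper obtains (ii) not as a Gram condition but by the same trick as (i), applied to the derived pair $(X_1^{-1}X_2,\,X_1^{-1}X_3)$, whose relative real part is $\Re(X_2^{-1}X_3)=a_{23}$; that is the cleaner route to (ii).

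There is, however, a genuine gap at the step you describe as "a direct but slightly lengthy algebraic manipulation": the claimed equivalence, modulo (i), of inequality (ii) with the Gram-determinant condition $1-l_{12}^2-l_{13}^2-l_{23}^2+2l_{12}l_{13}l_{23}\geq 0$ is false, so it cannot be shown. Take $a_1=a_2=a_3=\tfrac12$ and $a_{12}=a_{13}=a_{23}=-\tfrac25$. Each inequality in (i) evaluates to $\tfrac{7}{50}\in[0,1]$ and (ii) evaluates to $\tfrac{49}{125}\in[0,1]$, yet $\langle \Im X_i,\Im X_j\rangle$ would have to be $a_{ij}-a_ia_j=-\tfrac{13}{20}$ with $\|\Im X_i\|^2=\tfrac34$, i.e.\ $l_{ij}=-\tfrac{13}{15}$, and then $1-3l^2+2l^3=-\tfrac{8624}{3375}<0$: three vectors in $\mathbb{R}^3$ pairwise at angle $\arccos(-\tfrac{13}{15})>120^{\circ}$ do not exist, so no such triple of $\SUm{2}$ matrices exists. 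The correct implication is one-way: (i) together with the Gram condition implies (ii) (since realizability implies (ii)), but not conversely. Concretely, in your construction the quantity $d_3^2$ (the paper's $c_3^2=t_{123}/s_{12}$) is a positive multiple of the Gram determinant, and conditions (i)+(ii) do not force it to be nonnegative, so the realizability step breaks down exactly there. It is worth noting that the paper's own proof of the converse has the same unaddressed point --- its formula for $c_3$ silently requires $t_{123}\geq 0$, which is never deduced from (i) and (ii), and the example above shows it cannot be; the natural repair is to replace (ii) by the condition $t_{123}\geq 0$ (positive semidefiniteness of the Gram matrix of the imaginary parts), which your approach in fact identifies as the true realizability criterion. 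So your instinct about which inequality governs existence is right, but the proposed equivalence with (ii) is not available, and as written the converse cannot be completed from the four stated inequalities.
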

\begin{proof}
From Proposition \ref{pro:K2} we know that the image of $\mathcal{T}$
satisfies the inequalities in (i). The condition (ii) is not difficult
to show. Conversely, to find a triple $(X_{1},X_{2},X_{3})$ in the
image, suppose first that the triple is irreducible, and one of the
matrices is not $\pm\id$. So, after reordering the matrices, we may
assume $\det([X_{1},X_{2}])\neq0$, and $X_{1}\neq\pm\id$. Let us
consider the following invariants \begin{eqnarray*}
r_{jk} & = & \Re(X_{j}X_{k})-\Re X_{j}\Re X_{k}=a_{jk}-a_{j}a_{k}\\
s_{jk} & = & \frac{1-\Re(X_{j}X_{k}X_{j}^{-1}X_{k}^{-1})}{2}=1-a_{i}^{2}-a_{j}^{2}-a_{ij}^{2}+2a_{i}a_{j}a_{ij}\\
t_{jkl} & = & \frac{1}{r_{11}r_{22}r_{33}}\left|\begin{array}{ccc}
r_{11} & r_{12} & r_{13}\\
r_{21} & r_{22} & r_{23}\\
r_{31} & r_{32} & r_{33}\end{array}\right|=1-l_{12}^{2}-l_{13}^{2}-l_{23}^{2}+2l_{12}l_{13}l_{23}.\end{eqnarray*}
 Then, a direct computation shows that the following matrices form
a solution to $\mathcal{T}(X_{1},X_{2},X_{3})=(a_{1},...,a_{23})$.\[
X_{1}=\left(\begin{array}{cc}
a_{1}+ib_{1} & 0\\
0 & a_{1}-ib_{1}\end{array}\right),\qquad X_{2}=\left(\begin{array}{cc}
a_{2}+ib_{2} & id_{2}\\
id_{2} & a_{2}-ib_{2}\end{array}\right),\]
 \[
X_{3}=\left(\begin{array}{cc}
a_{3}+ib_{3} & c_{3}+id_{3}\\
-c_{3}+id_{3} & a_{3}-ib_{3}\end{array}\right),\]
 with\[
b_{1}=\sqrt{1-a_{1}^{2}}>0,\qquad b_{k}=-\frac{r_{1k}}{b_{1}},\qquad k=2,3,\]
 \[
d_{2}=\frac{\sqrt{s_{12}}}{b_{1}},\qquad d_{3}=\frac{r_{23}b_{1}^{2}+r_{12}r_{13}}{-d_{2}b_{1}^{2}},\]
 \[
c_{3}=\pm\sqrt{\frac{t_{123}}{s_{12}}}.\]
 Note that $c_{3}=0$ if and only if the two triples are the same,
and this is verified precisely when $t_{123}=0$. The cases when the
above solution is not defined are when $b_{1}=0$, $d_{2}=0$ or $s_{12}=0$.
In this case, the pair $(X_{1},X_{2})$ is not irreducible and corresponds
to $s_{12}=0$. If one of the other pairs are irreducible, we can
just relabel the matrices. If all three pairs are reducible, then
the triple is simultaneously diagonalizable, and it is easy to find
a solution.

To finish the proof, we just need to observe that the solution is
not unique, but up to conjugation there is at most two possible solutions,
each one corresponding to a choice of sign on $c_{3}$. So there is
exactly one solution, up to conjugation, if $t_{123}=0$ (which corresponds
to the Equation (\ref{eq:delta})) and exactly two, when $t_{123}\neq0$. 
\end{proof}

\begin{prop}
\label{pro:star}The image of $\mathcal{T}$ is a compact convex set
with non-empty interior. Thus it is homeomorphic to a $6$-dimensional
ball. 
\end{prop}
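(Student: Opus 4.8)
The plan is to show that the image $\mathcal{T}(K^{\times 3}/K) \subset [-1,1]^{\times 6}$ is a \emph{convex} subset of $\mathbb{R}^6$, from which the proposition follows immediately: Proposition \ref{K3} already identifies this image as a finite union of closed semialgebraic sets defined by the four inequalities, hence it is closed and bounded, hence compact; and a compact convex subset of $\mathbb{R}^6$ with non-empty interior is homeomorphic to a closed $6$-ball by a standard fact (radial projection from an interior point onto the boundary, as in \cite{BCR} or any convexity reference). So the entire content is (a) convexity and (b) non-empty interior.

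For non-empty interior, I would exhibit an explicit point of $[-1,1]^{\times 6}$ at which all four defining inequalities of Proposition \ref{K3} hold \emph{strictly} and lie in the open interval $(0,1)$; e.g. take all six coordinates $a_i, a_{jk}$ equal to $0$, which gives the value $1$ for each of the four quadratic expressions — borderline, so instead perturb to a small common value, or more safely choose a generic irreducible triple $(X_1,X_2,X_3)$ (three ``sufficiently independent'' rotations) and check the strict inequalities directly; since the four expressions are continuous functions of $(a_1,\dots,a_{23})$ and the inequalities are strict at such a point, an open neighborhood lies in the image, giving non-empty interior.

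The main obstacle is \textbf{convexity}, and I do not expect it to follow formally from the four inequalities alone (each individual inequality $1 - a_i^2 - a_j^2 - a_{ij}^2 + 2a_i a_j a_{ij} \in [0,1]$ carves out a region whose intersection is not obviously convex — indeed the cubic terms $2a_i a_j a_{ij}$ are indefinite). The right approach is the change of variables already introduced in Remark \ref{rem:JW}: substitute $a_i = \cos(\pi\theta_i)$ and likewise $a_{jk} = \cos(\pi\theta_{jk})$ with $\theta_i, \theta_{jk} \in [0,1]$, interpreting each $\theta$ as (a normalized) geodesic distance/angle on $\SUm{2} \cong S^3$. Under this substitution, by the cited result of \cite{JW}, each of the three ``triangle-type'' inequalities in (i) becomes the \emph{linear} triangle-inequality system on $(\theta_i, \theta_j, \theta_{ij})$: $\theta_i + \theta_j - \theta_{ij} \ge 0$ and cyclic, together with $\theta_i + \theta_j + \theta_{ij} \le 2$; and inequality (ii) similarly becomes the linear system on $(\theta_{12}, \theta_{13}, \theta_{23})$. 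Thus in the $\theta$-coordinates the constraint set is an intersection of half-spaces — a convex polytope $P \subset [0,1]^{\times 6}$.

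It then remains to observe that the coordinatewise map $\Theta = (\theta_1,\dots,\theta_{23}) \mapsto (a_1,\dots,a_{23})$, with each $\theta \mapsto \cos(\pi\theta)$ a homeomorphism $[0,1]\to[-1,1]$, carries $P$ homeomorphically onto the image of $\mathcal{T}$; since $P$ is a compact convex polytope with non-empty interior (non-emptiness of the interior of $P$ is elementary: the point $\theta_i = \theta_{jk} = 1/2$ satisfies all inequalities strictly), it is homeomorphic to a $6$-ball, and hence so is $\mathrm{im}(\mathcal{T}) \cong K^{\times 3}/K$. The one point needing care is verifying that $\mathcal{T}$ really does induce a homeomorphism onto its image — i.e. that $\mathcal{T}$ is injective on $K^{\times 3}/K$; but this is exactly the uniqueness-up-to-conjugation statement of Proposition \ref{K3} in the generic case, and on the locus where equation (\ref{eq:delta}) fails one checks the two preimage triples are actually identified by $\mathcal{T}$ together with the (determined) signs, or one argues that $\mathcal{T}$ followed by projection is a continuous bijection from a compact space to a Hausdorff space. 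Strictly speaking the statement as phrased only needs ``homeomorphic to a ball,'' so I would simply note that $\mathcal{T}$ factors as a continuous bijection from the compact Hausdorff space $K^{\times 3}/K$ onto the compact set $\mathrm{im}(\mathcal{T})$, hence a homeomorphism, and then transport convexity back through $\Theta$.
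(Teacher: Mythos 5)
Your main argument is exactly the paper's: pass to the angle coordinates $a=\cos(\pi\theta)$ of Remark \ref{rem:JW}, use Proposition \ref{K3} to see that the pulled-back image $P\subset[0,1]^{6}$ is cut out by the $16$ linear inequalities of the form (\ref{eq:ineq}) (each of the four conditions involves only three variables, so each gives a tetrahedron crossed with a cube), hence $P$ is a compact convex set; note it has non-empty interior (all $\theta=1/2$ works, satisfying every inequality strictly); and transport the resulting ball structure through the coordinatewise homeomorphism $\phi$. That is the same proof as in the paper and it is correct. (One caveat you share with the paper's own phrasing: what is actually proved is convexity of $P$ in the $\theta$-coordinates; convexity is not preserved by the cosine change of variables, but only the homeomorphism type is needed for the conclusion.)

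However, your closing paragraph contains a genuine error, even though the statement does not need it: $\mathcal{T}$ is \emph{not} injective on $K^{\times3}/K$, and $K^{\times3}/K$ is \emph{not} homeomorphic to $\mathrm{im}(\mathcal{T})$. Proposition \ref{K3} says the opposite of what you quote: uniqueness up to conjugation holds precisely on the locus where (\ref{eq:delta}) holds, and off that locus there are exactly two distinct conjugacy classes (distinguished by the sign of $c_{3}$) with the same image under $\mathcal{T}$; they are not identified. In particular the appeal to ``continuous bijection from a compact space to a Hausdorff space'' does not apply, and if $\mathcal{T}$ were a homeomorphism onto its image then $\X_3(\SUm{2})$ would be a closed $6$-ball, contradicting the very next theorem, which glues the two sheets $B_{+}$ and $B_{-}$ along $B_{0}$ to obtain $S^{6}$. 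Delete that final argument; the proposition concerns only the image of $\mathcal{T}$, for which your $\theta$-coordinate convexity argument suffices.
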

\begin{proof}
Consider the homeomorphism \begin{eqnarray*}
\phi:[0,1]^{6} & \to & [-1,1]^{6}\\
(\theta_{1},...,\theta_{6}) & \to & (a_{1},...,a_{6})\end{eqnarray*}
given by $a_{j}=cos(\pi\theta_{j})$, $j=1,...,6$. Let us make the
identifications $a_{4}=a_{12}$, $a_{5}=a_{13}$ and $a_{6}=a_{23}$,
and let $P=\phi^{-1}\left(\mathcal{T}(\X_3(K))\right)\subset[0,1]^{6}$. Then $\phi$ restricts to a homeomorphism
between $P$ and the image of $\mathcal{T}$. Because of Remark \ref{rem:JW},
the set $P$ is defined by 16 inequalities of the form of Equations
\ref{eq:ineq}. Indeed, each condition in Proposition \ref{K3} involves only three variables, corresponding to
four such inequalities defining a tetrahedron in the space defined by those variables. Therefore $P$ is the intersection of 4
compact convex spaces, each being the Cartesian product of a tetrahedron
and the cube $[0,1]^{3}$. So, $P$ is itself a compact convex set
with non-empty interior.  Thus, $P$ is a closed 6-ball and the same holds for the image of $\mathcal{T}$. 
\end{proof}

\begin{thm}
The space $\X_3(\SUm{2})$ is homeomorphic to $S^{6}$. 
\end{thm}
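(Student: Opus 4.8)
The plan is to combine the previous proposition with a gluing argument that identifies $\X_3(\SUm{2})$ as the double of a $6$-ball. From Proposition \ref{pro:star} we know that the map $\mathcal{T}$ realizes the image $\mathcal{T}(\X_3(K))$ as a closed $6$-ball $B$. The only failure of injectivity of $\mathcal{T}$, by Proposition \ref{K3}, is that over points where the quantity $\delta=1-l_{12}^{2}-l_{13}^{2}-l_{23}^{2}+2l_{12}l_{13}l_{23}$ is nonzero there are exactly two conjugacy classes of triples (the two choices of sign of $c_3$), while over points where $\delta=0$ there is exactly one. Thus $\X_3(\SUm{2})$ is obtained from two copies of $B$ by gluing them along the locus $\Delta=\{\delta=0\}\subset B$.

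First I would show that $\Delta$ is exactly the topological boundary $\partial B \cong S^5$ of the $6$-ball $B$. This is the key geometric point: one must verify that the hypersurface $\delta=0$ inside $\mathcal{T}(\X_3(K))$ coincides with the boundary of the convex body. One direction is that on the boundary of the convex set $P$ (in the $\theta$-coordinates of Proposition \ref{pro:star}) at least one of the defining inequalities is an equality, and one checks — using the explicit formulas relating $l_{ij}$ to the $a$'s and the Fricke-type relations — that such a degeneration forces $\delta=0$ (geometrically, the three imaginary-part vectors $\Im X_i$ become linearly dependent, so the Gram determinant $\delta$ vanishes; and $\delta\ge 0$ with equality only in the degenerate/boundary case). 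Conversely, if $\delta=0$ then the triple is ``planar'' and can be pushed to the boundary. So $\Delta=\partial B$, and moreover the fibre of $\mathcal{T}$ over an interior point is two points, over a boundary point one point.

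Given this, I would argue that $\X_3(\SUm{2})$ is the quotient of $B \sqcup B$ obtained by identifying the two boundary spheres via the identity; this is by definition the double $DB$ of the ball $B$, and the double of an $n$-ball is homeomorphic to $S^n$. To make this rigorous one needs a little care that the quotient topology on $\X_3(\SUm{2})$ (coming from $\mathcal{T}$, which is a continuous proper surjection and hence a closed map, so a quotient map onto its image with the doubled structure) agrees with the semi-algebraic/metric topology on the character space — this follows from Section \ref{topologies} together with the fact that $\mathcal{T}$ is proper with the stated fibres, so it factors through a homeomorphism from $DB$. Finally, $DB \cong S^6$ since $B\cong \overline{D^6}$, completing the proof.

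The main obstacle I expect is the identification $\Delta=\partial B$: proving that the vanishing locus of $\delta$ is precisely the boundary of the convex body requires genuinely analyzing the explicit inequalities and the algebraic relation between the $l_{ij}$ and the $a$-coordinates, rather than a soft argument. A secondary subtlety is checking that the two-to-one branched structure of $\mathcal{T}$ really does glue to a manifold (i.e. that no extra identifications occur and that the topology is the doubled one) rather than, say, producing a pinched or non-Hausdorff quotient — but properness of $\mathcal{T}$ and the precise fibre count from Proposition \ref{K3} handle this.
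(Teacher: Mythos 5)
Your proposal follows essentially the same route as the paper: use Proposition \ref{K3} to see that $\mathcal{T}$ is two-to-one exactly off the locus $t_{123}=0$, identify that locus with the boundary $\partial B\cong S^5$ of the convex $6$-ball $B$ from Proposition \ref{pro:star}, and conclude that $\X_3(\SUm{2})$ is two $6$-balls glued along a $5$-sphere, hence $S^6$. If anything, your Gram-determinant justification that the branch locus is precisely $\partial B$ (with the minor caveat that $l_{ij}$ is undefined when some $X_i=\pm\id$, where one should use the unnormalized determinant) makes explicit a point the paper's proof only asserts.
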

\begin{proof}
Let $B$ denote the image of $\mathcal{T}$, and $B^{o}$ its interior. According
to Proposition \ref{K3} we can write $K^{\times3}/K$ as $B_{+}\sqcup B_0\sqcup B_{-}$
where $B_{\pm}$ correspond to the $\pm$ sign of $c_{3}$ and $V$
correspond to $c_{3}=t_{123}=0$. Then, because of the form of the
solutions, $\mathcal{T}|_{B_{\pm}}\to B^{o}$ are homeomorphisms onto an open
6-ball. Moreover, since the solutions extend to the boundary $\partial B$,
$\mathcal{T}|_{B_0}\to\partial B$ is also a homeomorphism onto a 5-sphere. So
$K^{\times3}/K$ is obtained from the gluing of 2 6-balls along a
5-sphere, and is therefore homeomorphic to a 6-sphere.
\end{proof}

\subsection{$\SUm3$ rank 2}
Our goal is now to establish
\begin{thm}\label{s8thm}
$\X_2(\SUm3)$ is homeomorphic to an 8-sphere.
\end{thm}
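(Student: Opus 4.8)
The plan is to follow the same scheme that worked for $\SUm{2}$ rank 3: realize $\X_2(\SUm{3})=\SUm{3}^{\times 2}/\SUm{3}$ as the union of two closed $8$-balls glued along their boundary $7$-sphere. First I would invoke Theorem \ref{homotopy-type} only as a sanity check; the real work is geometric and must track the homeomorphism type, not just homotopy. By Procesi's theorem (Theorem \ref{prfd1}) and the fact that $\mathbb{R}[\R_2(\SUm{3})]^{\SUm{3}}$ is generated by the real and imaginary parts of trace words, one knows that $\X_2(\SUm{3})$ embeds in a Euclidean space via the generating trace functions. For $\SLm{3}$ with two generators $X,Y$ the complex invariant ring is generated by the nine traces $\tr{X},\tr{Y},\tr{X^{-1}},\tr{Y^{-1}},\tr{XY},\tr{X^{-1}Y^{-1}},\tr{X^{-1}Y},\tr{XY^{-1}},\tr{XYX^{-1}Y^{-1}}$, with a single relation expressing $\tr{XYX^{-1}Y^{-1}}$ as a root of a quadratic whose coefficients are polynomials in the other eight (the ``Lawton relation''). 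Over $\SUm{3}$ the first eight traces are related to the last ten real coordinates by $\tr{X^{-1}}=\overline{\tr{X}}$ etc., so the relevant map is
$$
\mathcal{T}:\SUm{3}^{\times 2}/\SUm{3}\longrightarrow \mathbb{R}^{?},\qquad [(X,Y)]\mapsto\bigl(\tr{X},\tr{Y},\tr{XY},\tr{X^{-1}Y}\bigr)\in\C^4\cong\mathbb{R}^8,
$$
together with the value of $\tr{XYX^{-1}Y^{-1}}$ as a ``ninth coordinate''. The heart of the argument is:

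\emph{Step 1 (image is a ball).} I would show that the image $B$ of the eight real coordinates $(\tr{X},\tr{Y},\tr{XY},\tr{X^{-1}Y})$ — i.e.\ the projection of the $\SUm{3}$-character variety forgetting the commutator trace — is a compact subset of $\mathbb{R}^8$ homeomorphic to a closed $8$-ball. The natural route, paralleling Proposition \ref{pro:star}, is to pass to ``alcove/angle'' coordinates: each of $X$, $Y$, $XY$, $X^{-1}Y$ lies in $\SUm{3}$, whose conjugacy class is described by the $\SUm{3}$ alcove (a $2$-simplex, cf.\ Figure \ref{su3rank1} and the discriminant inequality \eqref{discriminant}), and the constraints relating these four conjugacy classes should cut out a convex — or at least manifestly cell-like — region. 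Alternatively one identifies $B$ directly as the set of $(\tau_1,\tau_2,\tau_3,\tau_4)\in\C^4$ such that the quadratic (in the commutator trace) arising from the Lawton relation has a solution which is the trace of an element of $\SUm{3}$; this ``solvability'' condition is a system of real polynomial inequalities, and I would argue it defines a contractible full-dimensional region with collared boundary, hence a ball by standard results on semialgebraic sets (the cone structure of \cite{BCR} 9.3.6, plus contractibility forced by Theorem \ref{homotopy-type} — note $S^8$ minus a point is contractible, which is consistent).

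\emph{Step 2 (two sheets and the gluing).} Exactly as in the rank-$3$ $\SUm{2}$ case, over the interior $B^{o}$ the fiber of $\mathcal{T}$ has two points, distinguished by the two roots of the quadratic for $\tr{XYX^{-1}Y^{-1}}$ (equivalently, by the sign of the discriminant $\Delta$ of that quadratic, which plays the role of $t_{123}$ in Proposition \ref{K3}); over $\partial B$ the discriminant vanishes and the fiber is a single point, because the two roots coincide and an irreducible pair degenerates to a reducible one. Thus $\SUm{3}^{\times 2}/\SUm{3}=B_{+}\cup B_0\cup B_{-}$ with $\mathcal{T}|_{B_{\pm}}$ a homeomorphism onto $B^{o}$ and $\mathcal{T}|_{B_0}$ a homeomorphism onto $\partial B\cong S^{7}$; gluing two $8$-balls along their boundary $S^{7}$ gives $S^{8}$. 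The nontrivial verifications here are (a) that the two sheets really are separated by a sign and each maps homeomorphically, which requires exhibiting explicit normal-form matrices $X,Y$ analogous to the ansatz in the proof of Proposition \ref{K3}, and (b) that these normal forms extend continuously to $\partial B$ and there produce a single conjugacy class.

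\emph{Main obstacle.} The hard part will be Step 1 — proving that $B$ is genuinely a closed $8$-ball. In the $\SUm{2}$ cases one had the luxury that the defining region was literally an intersection of products of tetrahedra with cubes, hence obviously convex; for $\SUm{3}$ the alcove is already a triangle rather than an interval, the relation between $X,Y,XY,X^{-1}Y$ is the genuinely nonlinear Lawton quadratic, and I do not expect $B$ to be convex in the trace coordinates. So the argument will likely need either (i) a clever choice of coordinates in which $B$ becomes convex or star-shaped, or (ii) a more robust topological input — e.g.\ showing $B$ is a compact contractible semialgebraic set whose boundary is a locally flat $S^{7}$ and invoking the generalized Sch\"onflies / $h$-cobordism package, using Theorem \ref{homotopy-type} to get contractibility of $B$ for free. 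Producing the explicit normal forms and checking the boundary degeneration in Step 2 is technically involved but, as in Proposition \ref{K3}, should reduce to a determined but finite computation with $3\times 3$ $\SUm{3}$ matrices.
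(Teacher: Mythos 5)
Your overall skeleton (write $\X_2(\SUm3)$ as two $8$-cells glued along a common boundary $7$-sphere, with the two sheets distinguished by the two roots of the quadratic satisfied by $\tr{X_1X_2X_1^{-1}X_2^{-1}}$) is the same as the paper's, but both of the steps you flag as ``technically involved but finite'' are exactly the points where the paper says this strategy breaks down, and your proposed substitutes do not close the gap. Concretely: Step 2 presupposes an explicit slice, i.e.\ normal forms for a pair in $\SUm3$ realizing a prescribed value of the eight coordinates, analogous to the ansatz in Proposition \ref{K3}. No such slice is produced in the paper; indeed the paper states explicitly that it does not know whether the natural candidate region $\mathcal{S}_+$ of \eqref{semialgset} coincides with $B_+$ (i.e.\ whether every admissible $8$-tuple is realized by a \emph{unitary} pair), and that this realizability question is nontrivial and is deliberately avoided. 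So ``$\mathcal{T}|_{B_\pm}$ is a homeomorphism onto $B^o$ and the boundary fiber is a single point'' cannot be established the way you propose, and with it your identification of $\partial B$ and the gluing collapses.

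Your fallback for Step 1 is also not sound: Theorem \ref{homotopy-type} compares $\X_2(\SUm3)$ with $\X_2(\SLm3)$ (neither of which is contractible -- they turn out to be homotopy $8$-spheres), and it says nothing about the image $B$ of the eight coordinates, which is the quotient of $\X_2(\SUm3)$ by the transpose involution, nor about the half $B_+$. The paper's actual mechanism is different and is the heart of its proof: the \emph{strong} deformation retraction of Theorem \ref{thm:main} is shown by explicit computation (Lemma \ref{contractiblelemma}) to commute with the transpose involution $\mathsf{t}$, hence descends to the $\mathbb{Z}_2$-quotients, giving $\overline{B_+}\cong\X_2(\SUm3)/\langle\mathsf{t}\rangle\simeq\X_2(\SLm3)/\langle\mathsf{t}\rangle\cong\C^8$, so $B_+$ is a contractible open $8$-manifold. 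Even then, contractibility alone does not make an open $8$-manifold a cell (Whitehead-type examples); the paper needs the non-trivial product decomposition $B_+\sqcup B_-\cong(\X_1)^o\times S$ (Proposition \ref{productdecomp}) together with Luft's theorem (Proposition \ref{luftlemma}) to conclude $B_+\cong\mathbb{R}^8$, and only afterwards identifies $B_0\cong S^7$ using the injectivity of the coordinate map. Your alternative route via generalized Sch\"onflies would additionally require knowing that $B_0$ is a locally flat $S^7$, which is not established and is essentially what is to be proved. So the missing ideas are: (i) equivariance of the Kempf--Ness retraction under the transpose involution, (ii) the product decomposition, and (iii) Luft's theorem; without them neither your Step 1 nor Step 2 goes through.
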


The proof of this theorem is surprisingly long, since unlike the case of $\SUm{2}^{\times 3}/\SUm{2}$, we do not have an explicit slice.  We break the discussion into a series of subsections.  Throughout $K=\SUm{3}$ and $G=\SLm{3}$.

\subsubsection{Obtaining real coordinates from complex coordinates.}

In \cite{La1} the following theorem is established.

\begin{thm} 
\begin{enumerate}
\item[]
\item[$(i)$] $\C[\X_2(\SLm{3})]$ is minimally generated by the nine affine coordinate functions
\begin{align*}
\mathcal{G}=&\{\tr{\xb_1},\tr{\xb_2},\tr{\xb_1\xb_2},\tr{\xb_1^{-1}},\tr{\xb_2^{-1}},\tr{\xb_1\xb_2^{-1}},\\ &\tr{\xb_2\xb_1^{-1}},\tr{\xb_1^{-1}\xb_2^{-1}},\tr{\xb_1\xb_2\xb_1^{-1}\xb_2^{-1}}\}.
\end{align*}
\item[$(ii)$] The eight elements in $\mathcal{G}\backslash \{\tr{\xb_1\xb_2\xb_1^{-1}\xb_2^{-1}}\}$ are a maximal algebraically independent subset.  Therefore, they are local parameters, since the Krull dimension is $8$.
\item[$(iii)$] $\tr{\xb_1\xb_2\xb_1^{-1}\xb_2^{-1}}$ satisfies a monic (degree 2) relation over the algebraically independent generators.  It generates the ideal.
\item[$(iv)$] This relation is of the form $t^2-Pt+Q$ where $$P=\tr{\xb_1\xb_2\xb_1^{-1}\xb_2^{-1}}+\tr{\xb_2\xb_1\xb_2^{-1}\xb_1^{-1}}$$ and $$Q=\tr{\xb_1\xb_2\xb_1^{-1}\xb_2^{-1}}\tr{\xb_2\xb_1\xb_2^{-1}\xb_1^{-1}}$$
\end{enumerate}

\end{thm}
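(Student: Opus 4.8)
The plan is to prove (i)--(iv) in the order: generation, dimension together with algebraic independence, the quadratic relation, and finally minimality (which is most naturally read off once the other parts are in hand). For \emph{generation}, by Procesi's First Fundamental Theorem (Theorem~\ref{prfd1}) the ring $\C[\X_2(\SLm{3})]=\C[\R_2(\SLm{3})]^{\SLm{3}}$ is generated by the functions $\tr{\wb}$ with $\wb$ a word of length $\le d(3)=6$ in $\xb_1,\xb_2,\xb_1^{-1},\xb_2^{-1}$. I would then reduce this finite list to $\mathcal{G}$ using the polarized Cayley--Hamilton (Procesi--Razmyslov) identities for $3\times 3$ matrices: for $M\in\SLm{3}$ one has $\tr{M^{-1}}=\tfrac12\bigl(\tr{M}^2-\tr{M^2}\bigr)$ and $M^{-1}=M^{2}-\tr{M}M+\tr{M^{-1}}\id$, and the full polarization of the characteristic polynomial of a triple of $3\times 3$ matrices allows one to ``straighten'' an arbitrary word --- trading inverses for squares and reordering positive blocks --- so that every $\tr{\wb}$ becomes a polynomial in the nine elements of $\mathcal{G}$.

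For \emph{dimension and independence}, a generic pair in $\SLm{3}^{\times 2}$ is an irreducible representation of $\F_2$, so its conjugation orbit has dimension $\dim\SLm{3}=8$ (the stabilizer is the finite centre $\Zm{3}$); hence $\dim\X_2(\SLm{3})=2\cdot 8-8=8$, and the fraction field has transcendence degree $8$. Thus at most eight elements of $\mathcal{G}$ can be algebraically independent; that the eight of (ii) are independent I would confirm by exhibiting a single (e.g.\ numerical) pair $(\xb_1,\xb_2)$ at which the Jacobian of the evaluation map $\SLm{3}^{\times 2}\to\C^{8}$ built from those eight functions has rank $8$. Being algebraically independent and equal in number to $\dim\X_2(\SLm{3})$, the extension $R:=\C[\mathcal{G}\setminus\{\tr{\xb_1\xb_2\xb_1^{-1}\xb_2^{-1}}\}]\subseteq\C[\X_2(\SLm{3})]$ is module-finite, the eight functions are a system of parameters (and, by separability in characteristic zero, local parameters on the top stratum), and $\tr{\xb_1\xb_2\xb_1^{-1}\xb_2^{-1}}$ is algebraic over $R$.

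For \emph{the relation and minimality}, put $M=\xb_1\xb_2\xb_1^{-1}\xb_2^{-1}$, $u=\tr{M}$, and $v=\tr{\xb_2\xb_1\xb_2^{-1}\xb_1^{-1}}=\tr{M^{-1}}$; trivially $u$ and $v$ are the two roots of $t^{2}-Pt+Q$ with $P=u+v$ and $Q=uv$, so $u$ satisfies this monic degree-$2$ relation. The substance is that $P$ and $Q$ lie in $R$: applying the same polarized Cayley--Hamilton identities, the symmetric combinations $\tr{M}+\tr{M^{-1}}$ and $\tr{M}\tr{M^{-1}}$ reduce --- eliminating every occurrence of the length-$4$ word $M$ --- to polynomials in $\tr{\xb_i},\tr{\xb_i^{-1}},\tr{\xb_1\xb_2},\tr{\xb_1\xb_2^{-1}},\tr{\xb_2\xb_1^{-1}},\tr{\xb_1^{-1}\xb_2^{-1}}$. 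The polynomial $t^{2}-Pt+Q=(t-u)(t-v)$ is irreducible over $\mathrm{Frac}(R)$ because $u\ne v$ as functions on $\X_2(\SLm{3})$: the transpose involution $(\xb_1,\xb_2)\mapsto(\xb_1^{\mathsf t},\xb_2^{\mathsf t})$ fixes each of the eight generators of $R$ but interchanges $u$ and $v$, and these differ on a dense set. Consequently $\C[\X_2(\SLm{3})]=R[u]=R\oplus Ru$ is free of rank $2$ over $R$, the ideal of relations is principal generated by $t^{2}-Pt+Q$, and the presentation by $\mathcal{G}$ is minimal: deleting $u$ leaves $R\subsetneq\C[\X_2(\SLm{3})]$ (as $u\notin\mathrm{Frac}(R)$), while deleting any $g$ among the eight leaves an algebra inside $\C[\mathcal{G}\setminus\{g\}]$, which by the rank-$2$ decomposition together with the algebraic independence of the eight cannot contain $g$.

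The \emph{main obstacle} is the constructive step common to generation and the quadratic relation: writing down the polarized Cayley--Hamilton identities for triples of $3\times 3$ matrices and pushing every trace of a word of length $\le 6$ --- and in particular $u+v$ and $uv$ --- into explicit polynomials in the eight algebraically independent generators. Once these identities are available the remaining ingredients (the dimension count, the transpose symmetry, and the rank-$2$ module decomposition) are routine.
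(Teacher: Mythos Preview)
The paper does not prove this theorem itself: it is imported wholesale from \cite{La1}, as the sentence introducing it (``In \cite{La1} the following theorem is established'') makes explicit, and the paper adds only that \cite{La1} supplies the explicit formulas for $P$ and $Q$ in the eight algebraically independent generators. There is thus no in-paper argument to compare against.

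That said, your outline is essentially the strategy carried out in \cite{La1}. Generation via Procesi's First Fundamental Theorem followed by reduction through the polarized Cayley--Hamilton (Procesi--Razmyslov) identities, a dimension count together with a Jacobian check for the algebraic independence of the eight parameters, and the identification of the single quadratic relation are exactly the ingredients there; the explicit straightening of all trace words of length $\le 6$ and of $P,Q$ that you flag as the ``main obstacle'' is indeed the computational heart of that paper. Your use of the transpose involution to see that $t_{(5)}\ne t_{(-5)}$ generically --- hence that $t^2-Pt+Q$ is irreducible over $\mathrm{Frac}(R)$ --- is precisely the mechanism the present paper exploits in the proposition immediately following the quoted theorem.

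One point deserves a little more care: minimality when one of the eight parameters $g$ is removed. Your clause ``by the rank-$2$ decomposition together with the algebraic independence of the eight cannot contain $g$'' hides a nontrivial step, since after reducing a putative expression $g=f(\text{other seven},u)$ via $u^2=Pu-Q$ the coefficients $P,Q\in R$ may reintroduce $g$, so one does not immediately land on a relation among the eight. A cleaner finish is to note that any $8$-element generating set of an $8$-dimensional affine domain forces the ring to be polynomial, whereas $\C[\X_2(\SLm{3})]$ is not: the branched double cover $\mathcal{T}:\X_2(\SLm{3})\to\C^8$ has nonempty branch locus (containing the reducible characters), so $\X_2(\SLm{3})$ is singular there. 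This is a minor wrinkle rather than a structural gap.
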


We note that \cite{La1} provides explicit formulas for $P$ and $Q$ in terms of the first eight elements of $\mathcal{G}$.

Said differently, $\X_2(\SLm{3})\to \C^8$ is a branched double cover, submersive off the branching locus.  What is not discussed in \cite{La1} is how the two preimages of any given image point are related.  We now address this as it becomes relevant.

To simplify matters, make the following notational changes:

\begin{center}
\begin{tabular}{ll}
$t_{(1)}=\tr{\xb_1}$ & $t_{(-1)}=\tr{\xb_1^{-1}}$\\
$t_{(2)}=\tr{\xb_2}$& $t_{(-2)}=\tr{\xb_2^{-1}}$\\
$t_{(3)}=\tr{\xb_1\xb_2}$& $t_{(-3)}=\tr{\xb_1^{-1}\xb_2^{-1}}$\\
$t_{(4)}=\tr{\xb_1\xb_2^{-1}}$& $t_{(-4)}=\tr{\xb_1^{-1}\xb_2}$\\
$t_{(5)}=\tr{\xb_1\xb_2\xb_1^{-1}\xb_2^{-1}}$& $t_{(-5)}=\tr{\xb_2\xb_1\xb_2^{-1}\xb_1^{-1}}$.
\end{tabular}
\end{center}

We will use this notation throughout this section.

In these terms, the branched double covering map is $$\mathcal{T}=(\ti{1},\ti{-1},...,\ti{4},\ti{-4}):\X_2(\SLm{3})\to \C^8.$$ 

\begin{prop}
The involution of $\R_2(\SLm{3})$ given by $(\xb_1,\xb_2)\mapsto(\xb_1^\mathsf{t},\xb_2^\mathsf{t})$ descends to an involution of $\X_2(\SLm{3})$ and the branching locus of $\mathcal{T}$ is exactly the fixed point set of this involution.
\end{prop}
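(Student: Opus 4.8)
The plan is to verify the three assertions in turn: that transposition induces a well-defined involution on $\X_2(\SLm{3})$, and that its fixed-point set coincides with the branching locus of $\mathcal{T}$. First I would check that the map $\tau:(\xb_1,\xb_2)\mapsto(\xb_1^\mathsf{t},\xb_2^\mathsf{t})$ on $\R_2(\SLm{3})$ descends to the quotient. Since $(g\xb_ig^{-1})^\mathsf{t}=(g^{-1})^\mathsf{t}\xb_i^\mathsf{t}g^\mathsf{t}=(g^\mathsf{t})^{-1}\xb_i^\mathsf{t}(g^\mathsf{t})$, transposition carries conjugation orbits to conjugation orbits, so $\tau$ is $\SLm{3}$-equivariant up to the outer automorphism $g\mapsto (g^\mathsf{t})^{-1}$ and hence induces a morphism $\bar\tau$ of $\X_2(\SLm{3})=\R_2(\SLm{3})\aq\SLm{3}$; it is an involution because $\tau^2=\mathrm{id}$. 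Concretely, on coordinate rings $\bar\tau$ acts on the generators $\mathcal{G}$ by $\tr{\xb_{i_1}^{\varepsilon_1}\cdots\xb_{i_l}^{\varepsilon_l}}\mapsto \tr{(\xb_{i_1}^{\varepsilon_1}\cdots\xb_{i_l}^{\varepsilon_l})^\mathsf{t}}=\tr{\xb_{i_l}^{\varepsilon_l}\cdots\xb_{i_1}^{\varepsilon_1}}$, i.e. it reverses the order of each word. The key computation is that each of the first eight functions $t_{(1)},t_{(-1)},\dots,t_{(4)},t_{(-4)}$ is \emph{invariant} under this order-reversal: $\tr{\xb_1}$, $\tr{\xb_1^{-1}}$, $\tr{\xb_2}$, $\tr{\xb_2^{-1}}$ are trivially fixed, $\tr{\xb_1\xb_2}\mapsto\tr{\xb_2\xb_1}=\tr{\xb_1\xb_2}$, and similarly $\tr{\xb_1\xb_2^{-1}}\mapsto\tr{\xb_2^{-1}\xb_1}=\tr{\xb_1\xb_2^{-1}}$ and likewise for the remaining two. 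Hence $\mathcal{T}\circ\bar\tau=\mathcal{T}$, so $\bar\tau$ permutes the (at most two) points in each fiber of $\mathcal{T}$.

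It then remains to identify the two sheets with the two choices of sign. From the branched-cover description, $t_{(5)}$ satisfies $t^2-Pt+Q=0$ with $P,Q$ polynomials in $t_{(1)},\dots,t_{(-4)}$ and with $P=t_{(5)}+t_{(-5)}$, $Q=t_{(5)}t_{(-5)}$; thus the two preimages of a generic point of $\C^8$ are exactly the two representations with $t_{(5)}$-coordinate equal to the two roots $t_{(5)}$ and $t_{(-5)}=\tr{\xb_2\xb_1\xb_2^{-1}\xb_1^{-1}}$. Now observe that order-reversal sends $\tr{\xb_1\xb_2\xb_1^{-1}\xb_2^{-1}}$ to $\tr{\xb_2^{-1}\xb_1^{-1}\xb_2\xb_1}=\tr{\xb_2\xb_1\xb_2^{-1}\xb_1^{-1}}$ (cyclically permuting), i.e. $\bar\tau$ exchanges the two roots $t_{(5)}\leftrightarrow t_{(-5)}$. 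Therefore $\bar\tau$ is precisely the deck transformation of the double cover $\mathcal{T}$: it acts trivially on the base-coordinates and swaps the two sheets. A point is fixed by $\bar\tau$ if and only if it is the \emph{unique} preimage of its $\mathcal{T}$-image, which happens exactly when $t_{(5)}=t_{(-5)}$, i.e. when the discriminant $P^2-4Q$ vanishes — and that is by definition the branching locus of $\mathcal{T}$. This gives both inclusions at once.

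The main obstacle I anticipate is the bookkeeping around \emph{reducible} or otherwise non-generic representations, where the fiber of $\mathcal{T}$ may not literally consist of two honest $\SLm{3}$-orbits, so "$\bar\tau$ is the deck transformation" needs a slightly more careful phrasing: one should argue at the level of the categorical quotient (maximal ideals / coordinate functions) rather than orbit-by-orbit. Concretely, the clean way is to work entirely algebraically — $\bar\tau$ is the $\C$-algebra automorphism of $\C[\X_2(\SLm{3})]=\C[t_{(1)},\dots,t_{(-4)}][t_{(5)}]/(t_{(5)}^2-Pt_{(5)}+Q)$ fixing $\C[t_{(1)},\dots,t_{(-4)}]$ and sending $t_{(5)}\mapsto P-t_{(5)}=t_{(-5)}$ — and then "fixed locus $=$ branching locus" is the standard statement that the ramification locus of a degree-two ring extension $A\hookrightarrow A[t]/(t^2-Pt+Q)$ is cut out by the discriminant $P^2-4Q$, which I would state and use (with the verification that $\bar\tau$ indeed acts this way being exactly the order-reversal computation above). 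One should also double-check that $\bar\tau$ genuinely is nontrivial — equivalently that the extension is nontrivial — which is part (iii) of the quoted theorem from \cite{La1}.
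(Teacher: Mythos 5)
Your proposal is correct and follows essentially the same route as the paper's proof: you show the transpose fixes the eight coordinates $t_{(\pm 1)},\dots,t_{(\pm 4)}$ (since reversing a word in at most two letters preserves the trace up to cyclic permutation) and swaps $t_{(5)}\leftrightarrow t_{(-5)}$, so the fixed-point set is exactly where the two roots of $t^2-Pt+Q$ coincide, i.e.\ the branching locus. Your extra care about descent to the categorical quotient and the purely algebraic reformulation via $\C[t_{(1)},\dots,t_{(-4)}][t_{(5)}]/(t_{(5)}^2-Pt_{(5)}+Q)$ is a welcome tightening of what the paper leaves implicit, but it is not a different argument.
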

\begin{proof}
Clearly, since all the generators $\ti{\pm k}$ for $1\leq k\leq 4$ are in terms of no more than two generic matrices, the transpose mapping fixes the traces.  For instance, $$\tr{\xb_1^\mathsf{t}(\xb_2^\mathsf{t})^{-1}}=\tr{\xb_1^\mathsf{t}(\xb_2^{-1})^{\mathsf{t}}}=\tr{(\xb_2^{-1}\xb_2)^\mathsf{t}}\tr{\xb_2^{-1}\xb_1}=\tr{\xb_1\xb_2^{-1}}.$$  Thus the first 8 coordinates of any preimage of the projection mapping are fixed by the transpose involution.

However, the values of $\ti{5}$ and $\ti{-5}$ are switched.  This follows from the following calculation: 
\begin{align*}
\tr{\xb_1^\mathsf{t}\xb_2^\mathsf{t}(\xb_1^\mathsf{t})^{-1}(\xb_2^\mathsf{t})^{-1}}&=\tr{(\xb_2^{-1}\xb_1^{-1}\xb_2\xb_1)^\mathsf{t}}\\ &=\tr{\xb_2^{-1}\xb_1^{-1}\xb_2\xb_1}=\tr{\xb_2\xb_1\xb_2^{-1}\xb_1^{-1}}.
\end{align*}

Since $\ti{5}$ and $\ti{-5}$ are the two possible values of the ninth coordinate of any preimage of the mapping $\mathcal{T}$, the preimages are permuted by the transpose.  Since the branching locus is defined to be the points where $\ti{5}=\ti{-5}$, these coincide with the representations whose semi-simplification is conjugate to a transpose invariant representation.
\end{proof}

In general, $$\tr{\wb^{-1}}=\tr{\overline{\wb}^\mathsf{t}}=\tr{\overline{\wb}}=\overline{\tr{\wb}}$$ whenever $\wb$ is unitary.  Therefore, for any unitary matrix $\wb$, $\tr{\wb+\wb^{-1}}=\tr{\wb}+\overline{\tr{\wb}}=2\Re(\tr{\wb})$.  Thus restricting $P$ and $Q$ to $\SUm{3}^{\times 2}$ we have $P=2\Re(\tr{\xb_1\xb_2\xb_1^{-1}\xb_2^{-1}})$ and $Q=|\tr{\xb_1\xb_2\xb_1^{-1}\xb_2^{-1}}|^2$.  In particular they are real, and thus $$\tr{\xb_1\xb_2\xb_1^{-1}\xb_2^{-1}}=\frac{P\pm\sqrt{P^2-4Q}}{2} \in \mathbb{R}\sqcup i\mathbb{R}$$ is well defined for all unitary representations (we do not need to take a branch cut!).

Consider the change of variables on the algebraically independent parameters: $$\left(\tr{\wb},\tr{\wb^{-1}}\right)\mapsto \left(\frac{\tr{\wb}+\tr{\wb^{-1}}}{2},\frac{\tr{\wb}-\tr{\wb^{-1}}}{2i}\right).$$

When $\wb$ is unitary, these coordinates are real.

We change the dependent coordinate $\tr{\xb_1\xb_2\xb_1^{-1}\xb_2^{-1}}$ to $$u_{(5)}=\frac{\tr{\xb_1\xb_2\xb_1^{-1}\xb_2^{-1}}-\tr{\xb_2\xb_1\xb_2^{-1}\xb_1^{-1}}}{2i}=\frac{2\tr{\xb_1\xb_2\xb_1^{-1}\xb_2^{-1}}-P}{2i}.$$

Then let $u_{(k)}=\frac{\ti{k}+\ti{-k}}{2}$ and $u_{(-k)}=\frac{\ti{k}-\ti{-k}}{2i}$.  
It is clear that $\ti{k}=u_{(k)}+iu_{(-k)}$ and $u_{(k)}|_{K^{\times 2}}=\Re(\ti{k})$ and $u_{(-k)}|_{K^{\times 2}}=\Im(\ti{k}).$

Since $\ti{5}=iu_{(5)}+P/2$ we conclude that 
\begin{align*}
\ti{5}^2-P\ti{5}+Q&=(iu_{(5)}+P/2)^2-P(iu_{(5)}+P/2)+Q\\
&=-u_{(5)}^2+P^2/4+iPu_{(5)}-P^2/2-iPu_{(5)}+Q\\
&=-u_{(5)}^2+(Q-P^2/4),
\end{align*}
and thus $u_{(5)}=\pm\sqrt{Q-P^2/4}$.  In particular, the discriminant satisfies $\Re(P^2-4Q)\leq 0$ and $\Im(P^2-4Q)=0$ when restricted to $K^{\times 2}$ since $u_{(5)}$ is real on $K^{\times2}$.  Moreover, $P^2-4Q=0$ if and only if $u_{(5)}=0$ if and only if $t_{(5)}=P/2$.  Since $u_{(5)}=\Im{\ti{5}}$ for a unitary representation, any such representation is in the branch locus if and only if $\tr{\xb_1\xb_2\xb_1^{-1}\xb_2^{-1}}$ is real.

\subsubsection{Some topological consequences of real coordinates}
Making the change of variables from the $\ti{k}$'s to the $u_{(k)}$'s does not change $\C[\X_2(G)]$ since we are working over $\C$.  Consequently, since the trace mapping $\mathcal{T}\times \ti{5}:\X_2(G)\hookrightarrow \C^9$ is injective, the mapping $$\mathcal{U}\times u_{(5)}=(u_{(1)},u_{(-1)},...,u_{(4)},u_{(-4)},u_{(5)})$$ defines an injection $\X_2(G)\hookrightarrow \C^9$ equally well.  By Lemma \ref{unitarylemma}, $\X_2(K)\hookrightarrow \X_2(G)$ is also injective which implies $\X_2(K)\hookrightarrow\mathbb{R}^9$ is injective because the $u_{(i)}$'s are real valued upon restriction to $\X_2(K)$.  Clearly $\mathcal{U}\times u_{(5)}$ is continuous since it is polynomial.  In fact it is a closed mapping since $K^{\times 2}/K$ is compact and $\mathbb{R}^9$ is Hausdorff.  Therefore, it defines a homeomorphism onto its image; of real dimension 8 since the first eight generators are algebraically independent.

Moreover, since it is the continuous image of a compact path-connected space (compact path-connected quotients of compact path-connected spaces are themselves compact and path-connected), its image is compact and path-connected as well.  The discriminant locus $P^2-4Q=0$ separates the image into three disjoint pieces since the plane $u_{(5)}=0$ separates $\mathbb{R}^9$.  The involution which sends a unitary representation to its transpose defines a homeomorphism between the two open subspaces $B_+=K^{\times 2}/K \cap \{u_{(5)}>0\}$ and $B_-=K^{\times 2}/K \cap \{u_{(5)}<0\}$.  Since there are abelian unitary representations and they have trivial commutator these representations satisfy $u_{(5)}=0$ and thus $B_0=K^{\times 2}/K \cap \{u_{(5)}=0\}$ is non-empty.

It is shown in \cite{La1} that the reducible characters (points in $\X_2(G)$ that correspond to extended equivalence classes of reducible representations) are contained in the branching locus $P^2-4Q=0$.  Thus the reducible unitary characters are contained in $B_0$ and thus are fixed by the transpose involution.  In fact we can describe $B_0$ as the union of the set of reducible characters with the set of irreducible characters that are transpose fixed.  Thus, $B_-$ and $B_+$ contain only irreducible characters that are not transpose fixed.  They are then smooth (in particular topological manifolds), and the algebraically independent coordinates $u_{(1)},...,u_{(-4)}$ define global parallelizable parameters.

\subsubsection{A partial description of the semi-algebraic structure.}\label{semialgebraic}
Let $\mathcal{U}=(u_{(1)},...,u_{(-4)})$.  Then $\mathcal{U}(B_+)$ is a bounded open smooth path-connected subspace of a hyper-cube $[a_1,b_1]\times \cdots \times [a_8,b_8]\subset \mathbb{R}^8$.  From our description of the alcove for $\SUm{3}/\SUm{3}$, we can see that $(u_{(k)},u_{(-k)})\in [\frac{-3}{2},3]\times [\frac{-3\sqrt{3}}{2},\frac{3\sqrt{3}}{2}]$ for $1\leq k\leq 4$.  Moreover, $u_{(5)}\in [\frac{-3\sqrt{3}}{2},\frac{3\sqrt{3}}{2}]$.  It is not hard to find representations that realize the extreme values for each of these 9 coordinates.  Thus this is the smallest hyper-cube possible.  Moreover, a further necessary condition is that $-27\leq P^2-4Q\leq 0$.  This follows from that fact that $u_{(5)}=\pm\sqrt{Q-P^2/4}.$  
Any abelian unitary representation will make $P^2-4Q=0$, but letting
$$\xb_1=\left(
\begin{array}{lll}
 0 & 1 & 0 \\
 0 & 0 & 1 \\
 1 & 0 & 0
\end{array}
\right), \quad
\xb_2=\left(
\begin{array}{lll}
 e^{-\frac{2 i \pi }{3}} & 0 & 0 \\
 0 & e^{\frac{2 i \pi }{3}} & 0 \\
 0 & 0 & 1
\end{array}
\right),$$
we get a representation that has the first 8 coordinates 0.  However, its commutator is
$\xb_1\xb_2\xb_1^{-1}\xb_2^{-1}=\mathrm{diag}(e^{\frac{2 i \pi }{3}}, e^{\frac{2 i \pi }{3}} , e^{\frac{2 i \pi }{3}})$ 
and the imaginary part of the trace of this matrix is $\frac{3\sqrt{3}}{2}$ making $P^2-4Q$ equal to $-27$. 
This representation is not transpose invariant, and so is irreducible and moreover,
corresponds to the ``center'' of $B_+$.

Let $\X_1\subset[\frac{-3}{2},3]\times [\frac{-3\sqrt{3}}{2},\frac{3\sqrt{3}}{2}]$ be the homeomorphic image given by the trace map of the Weyl alcove of $\mathfrak{su}(3)$, as described earlier (see Figure 1).

Then for $\xb_{1}$ as above and letting
\[
\xb_{2}=\left(
\begin{array}{lll}
e^{\frac{i(2\alpha+\beta)}{3}} & 0 & 0\\
0 & e^{\frac{i(\beta-\alpha)}{3}} & 0\\
0 & 0 & e^{\frac{-i(\alpha+2\beta)}{3}}\end{array}
\right)
\]
with $\alpha,\beta\in\mathbb{R}$ we have a representation satisfying
$\tr{X_{1}X_{2}X_{1}^{-1}X_{2}^{-1}}=e^{i\alpha}+e^{i\beta}+e^{-i(\alpha+\beta)},$
which is the general form of an element of $\X_{1}$. This shows that
the trace of the commutator defines a surjection $\kappa:\X_{2}(K)\to\X_{1}$.
In fact, for $X_1,X_2\in G=\SLm{3}$, 
we have an extension of $\kappa$, denoted by $\tilde{\kappa}:\X_{2}(G)\to\mathbb{C}$,
defined in the same fashion by $\tilde{\kappa}((X_{1},X_{2}))=\tr{X_{1}X_{2}X_{1}^{-1}X_{2}^{-1}}.$
It is again surjective by an analogous argument (using now $\alpha,\beta\in\mathbb{C}$)
and forms a commutative diagram
\begin{eqnarray*}
\X_{2}(K) & \hookrightarrow & \X_{2}(G)\\
\kappa\downarrow &  & \downarrow\tilde{\kappa}\\
\X_{1} & \hookrightarrow & \mathbb{C}.
\end{eqnarray*}
For proof of a more general result see Proposition 6 in \cite{La2}.

Using Inequality \eqref{discriminant} and letting $\tau=t_{(5)}$,
we have $P=2\Re(\tau)$ and $Q=|\tau|^{2}$, and $B_{+}$ is characterized
by $\Im(\tau)>0$ in $\X_{2}(K)$ which, as above, implies that $P^{2}-4Q<0$.
As suggested by the referee, in view of $2\Re(\tau^{3})=P^{3}-3PQ$,
Inequality \eqref{discriminant} becomes \[
\Delta:=Q^{2}+12PQ+18Q-4P^{3}-27\leq0.\]

We can thus say that $B_{+}$ is a semi-algebraic subset of the semi-algebraic
set \begin{equation}
\mathcal{S}_{+}:=\{(x_{1},...,x_{8})\in\X_{1}^{\times4}\ |\ \Delta\leq0\text{ and }P^{2}-4Q<0\}.
\label{semialgset}
\end{equation}




On the other hand, taking any 8-tuple $(x_{1},...,x_{8})\in\mathcal{S}_{+}$,
using the values of $P$ and $Q$ determined by this tuple, and the
inequality $\Delta\leq0$ we determine a point $\tau_{0}\in\X_{1}\subset\mathbb{C}$,
such that $P=2\Re(\tau_{0})$ and $Q=|\tau_{0}|^{2}$. Then, by the
surjectivity of $\tilde{\kappa}$, $\tilde{\kappa}^{-1}(\tau_{0})$
contains between one and two elements in $\X_{2}(G)$ that correspond to $(x_{1},...,x_{8})$,
that is at most two classes $[(X_{1},X_{2})]$ (but at least one), such that $\mathcal{U}(X_{1},X_{2})=(x_{1},...,x_{8})$.
However, it is not clear that this class contains a representative
$(X_{1},X_{2})\in K^{2}$. If this happens, the condition $P^{2}-4Q<0$
guarantees that it represents a point in $B_{+}$ which would imply
$\mathcal{S}_{+}=B_{+}$. Otherwise, we can only say, as stated before,
that $B_{+}$ is a semi-algebraic subset of $\mathcal{S}_{+}$. This is not trivial to check and will not be needed in the sequel.


\subsubsection{A partial product decomposition.}
The first matrix in the pair $(\xb_1,\xb_2)$ can always be taken to be diagonal, 
since all unitary matrices are diagonalizable.  Let it be denoted 
$X_1=\mathrm{diag} (\lambda _1 , \lambda _2 ,\lambda _3)$, such that $\lambda _1 \lambda _2 \lambda _3=1$.  
Moreover, if the orbit of $\xb_1$ is on the boundary of the alcove it has a repeated root, which occurs if and only if $$\left(\lambda _1-\lambda _2\right) \left(\lambda _1-\lambda _3\right) \left(\lambda _2-\lambda_3\right)=0.$$ This implies that the $K$-orbit of the pair $(\xb_1,\xb_2)$ cannot be of dimension greater than 7 (1 dimension for $\xb_1$ and no more than 6 for $\xb_2$).  Consequently, such a representation cannot be irreducible since all irreducible representation have 8 dimensional orbits.  Therefore, let $(\X_1)^o$ correspond to the interior of the Weyl alcove. 
Then there remains only a torus action, $T=S^1\times S^1$, on $X_2$ which preserves the form of $X_1$.   
Let $\xb_2=\left( x_{ij} \right)_{i,j=1,2,3}$ so that the torus action on $\SUm{3}$, denoted by 
$\mathrm{diag}( \mu _1,\mu _2 ,\mu _3)$, gives 
$$X_2\mapsto \left(
\begin{array}{lll}
 x_{11} & \frac{\mu _1 x_{12}}{\mu _2} & \frac{\mu _1 x_{13}}{\mu _3} \\
 \frac{\mu _2 x_{21}}{\mu _1} & x_{22} & \frac{\mu _2 x_{23}}{\mu _3} \\
 \frac{\mu _3 x_{31}}{\mu _1} & \frac{\mu _3 x_{32}}{\mu _2} & x_{33}
\end{array}
\right).$$
Define
$$S=\{ [X_2]\in\SUm{3}/T\ |\  \left(x_{12} x_{23} x_{31}-x_{13} x_{21} x_{32}\right)\not=0\}.$$ 
Clearly the polynomial $x_{12} x_{23} x_{31}-x_{13} x_{21} x_{32}$ is invariant and the set $S$ is well defined.  

\begin{prop} \label{productdecomp}
There is a homeomorphism $$B_+\sqcup B_-\cong (\X_1)^o\times\{ [X]\in\SUm{3}/T\ |\  \left(x_{12} x_{23} x_{31}-x_{13} x_{21} x_{32}\right)\not=0\}.$$  
\end{prop}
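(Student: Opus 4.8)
The plan is to write down the homeomorphism explicitly and verify it through a single key equivalence. Recall from the preceding subsections that $B_+\sqcup B_-$ is exactly the set of irreducible characters that are not transpose-fixed (equivalently, the irreducible characters with $u_{(5)}\neq 0$), and that an irreducible character has a representative $(X_1,X_2)$ with $X_1$ diagonal and with three \emph{distinct} eigenvalues, since a pair whose first entry lies on the boundary of the alcove has a $K$-orbit of dimension at most $7$ whereas irreducible representations have $8$-dimensional orbits. For $\ell\in(\X_1)^o$ write $D(\ell)\in\SUm{3}$ for the diagonal matrix whose ordered (distinct) eigenvalues are those determined by $\ell$ under the homeomorphism between $\X_1$ and the Weyl alcove, and set $\delta(X_2)=x_{12}x_{23}x_{31}-x_{13}x_{21}x_{32}$. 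I would then study the map
$$
F\colon (\X_1)^o\times S\longrightarrow \SUm{3}^{\times2}/\SUm{3},\qquad (\ell,[X_2])\longmapsto \bigl[(D(\ell),X_2)\bigr],
$$
which is well-defined because $\delta$ is $T$-invariant and $T$-conjugate pairs give the same character, together with its candidate inverse, which sends a character in $B_+\sqcup B_-$ to the pair $(\ell,[X_2])$ in which $\ell\in(\X_1)^o$ records the eigenvalue data of $X_1$ and $[X_2]\in\SUm{3}/T$ is well-defined since $Z_{\SUm{3}}(D(\ell))=T$ for distinct eigenvalues. Both maps are continuous: the alcove point of $X_1$ depends continuously on its characteristic polynomial, a diagonalizing unitary can be chosen continuously near any $X_1$ with distinct eigenvalues, any two such choices differ by an element of $T$ so the induced class in $\SUm{3}/T$ is well-defined and varies continuously, and everything then descends through the relevant quotient topologies.

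The crux is the following equivalence, which I would establish for \emph{any} diagonal $X_1\in\SUm{3}$ with distinct eigenvalues and any $X_2\in\SUm{3}$, observing that its right-hand side does not depend on which distinct eigenvalues $X_1$ has:
$$
(X_1,X_2)\ \text{is irreducible and not transpose-fixed}\iff \delta(X_2)\neq 0 .
$$
For $(\Leftarrow)$: since the $X_1$-invariant subspaces are exactly the coordinate subspaces, $(X_1,X_2)$ fails to be irreducible only if some coordinate line $\langle e_i\rangle$ (equivalently, by orthogonal complementation in $\SUm{3}$, some coordinate plane) is $X_2$-invariant; but that forces the off-diagonal part of the $i$-th column of $X_2$ to vanish, hence kills one factor in each monomial of $\delta$, so $\delta(X_2)=0$. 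And if $(X_1,X_2)$ were transpose-fixed, i.e. $\SLm{3}$-conjugate to $(X_1^{\mathsf t},X_2^{\mathsf t})=(X_1,X_2^{\mathsf t})$, the conjugator would centralize $X_1$ and so lie in the diagonal torus of $\SLm{3}$; writing it as $\mathrm{diag}(g_1,g_2,g_3)$, the relations $(g_i/g_j)x_{ij}=x_{ji}$ give, upon multiplying three of them, $x_{12}x_{23}x_{31}=x_{13}x_{21}x_{32}$, again contradicting $\delta(X_2)\neq0$.

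For $(\Rightarrow)$, suppose $(X_1,X_2)$ is irreducible with $\delta(X_2)=0$ and put $c=x_{12}x_{23}x_{31}=x_{13}x_{21}x_{32}$. If $c=0$, then after cyclically permuting the indices (which preserves $\delta$, irreducibility, and unitarity) we may assume $x_{12}=0$; irreducibility forbids the invariance of $\langle e_1\rangle$ and of $\langle e_2\rangle$, forcing $x_{13}\neq0$ and $x_{32}\neq0$, whence $c=0$ forces $x_{21}=0$ and then (non-invariance of $\langle e_1\rangle$ once more) $x_{31}\neq0$; but then the first two columns of the unitary matrix $X_2$ have Hermitian inner product $\overline{x_{31}}x_{32}\neq0$, a contradiction. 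If $c\neq0$, all six off-diagonal entries are nonzero, and the relations $g_1/g_2=x_{21}/x_{12}$ and $g_2/g_3=x_{32}/x_{23}$ are solvable — their compatibility with $g_1/g_3=x_{31}/x_{13}$ being precisely the identity $c=c$ — after which one rescales $(g_1,g_2,g_3)$ so that $g_1g_2g_3=1$; the resulting diagonal $g\in\SLm{3}$ fixes $X_1$ and conjugates $X_2$ to $X_2^{\mathsf t}$, so $(X_1,X_2)$ is transpose-fixed, against the hypothesis. Hence $\delta(X_2)\neq0$, which completes the equivalence.

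Granting the equivalence, $F$ lands in $B_+\sqcup B_-$ by $(\Leftarrow)$, the candidate inverse lands in $(\X_1)^o\times S$ by $(\Rightarrow)$, and a short bookkeeping check shows the two maps are mutually inverse; being continuous in both directions, $F$ is the desired homeomorphism $B_+\sqcup B_-\cong(\X_1)^o\times S$. I expect the main obstacle to be the case analysis of the third paragraph: excluding $c=0$ genuinely uses that $X_2$ is unitary (the statement is false over $\SLm{3}$), and handling $c\neq0$ relies on reading ``transpose-fixed'' as $\SLm{3}$-conjugacy, so that the diagonal conjugator need only have determinant $1$ rather than be unitary.
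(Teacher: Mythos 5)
Your proposal is correct in its main content but reaches the key point by a different mechanism than the paper. The paper's proof rests on one explicit computation: for $X_1=\mathrm{diag}(\lambda_1,\lambda_2,\lambda_3)$ and $X_2=(x_{ij})$ with $\det X_2=1$, the difference $\tr{X_1X_2X_1^{-1}X_2^{-1}}-\tr{X_2X_1X_2^{-1}X_1^{-1}}$ equals $-(\lambda_1-\lambda_2)(\lambda_1-\lambda_3)(\lambda_2-\lambda_3)\,\delta(X_2)$, where $\delta(X_2)=x_{12}x_{23}x_{31}-x_{13}x_{21}x_{32}$; so over the interior of the alcove the condition $u_{(5)}\neq0$ cutting out $B_+\sqcup B_-$ is literally the condition $\delta\neq0$ defining $S$, and the bijection is then read off, with continuity and properness asserted via $\mathcal{U}$. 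You instead characterize $\delta(X_2)\neq 0$ as ``irreducible and not conjugate to its transpose'' by elementary linear algebra (orthogonality of the columns of the unitary $X_2$ to exclude the $c=0$ case, an explicit diagonal conjugator in the $c\neq0$ case), and then use the description of $B_\pm$ as the non-transpose-fixed (hence irreducible) characters. I checked your case analysis and it is sound, and your two-sided continuity argument is more careful than the paper's one-line appeal to $\mathcal{U}$. The paper's identity buys a one-step proof that never mentions irreducibility; your argument buys a representation-theoretic explanation of what $\delta=0$ means, at the cost of the case analysis and of reading transpose-fixedness through $\SLm{3}$-conjugacy of irreducible pairs.

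There is one genuine weak link. To define the inverse on all of $B_+\sqcup B_-$ you invoke the recalled claim that every irreducible character has a representative with $X_1$ diagonal with distinct eigenvalues, justified by the orbit-dimension count. Although the paper asserts this, the assertion is false: $X_1=\mathrm{diag}(i,i,-1)$ together with the cyclic permutation matrix has no common eigenvector, hence is an irreducible pair with an $8$-dimensional $K$-orbit, yet $X_1$ lies on the boundary of the alcove. What you actually need is weaker and true, namely that every character in $B_+\sqcup B_-$ has first matrix with distinct eigenvalues; in the paper this is exactly what the displayed identity delivers (a repeated eigenvalue forces $t_{(5)}=t_{(-5)}$, so the character lies in $B_0$), but your route avoids that identity and supplies no substitute. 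The patch is short: if $X_1=\lambda P+\mu Q$ with $P,Q$ the orthogonal spectral projections of ranks $2$ and $1$, then $\tr{PX_2QX_2^{-1}}=\tr{Q}-\tr{QX_2QX_2^{-1}}=\tr{QX_2PX_2^{-1}}$, and expanding $\tr{X_1X_2X_1^{-1}X_2^{-1}}$ and $\tr{X_2X_1X_2^{-1}X_1^{-1}}$ in these projections shows the two traces agree, so such a character lies in $B_0$; alternatively, simply quote the paper's identity at this point. With that addition your proof is complete.
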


\begin{proof}
As already noted, $B_+\sqcup B_-$ contains only irreducibles and $(\X_1)^o\times \SUm{3}/T$ contains all irreducibles, so $$B_+\sqcup B_-\hookrightarrow (\X_1)^o\times \SUm{3}/T \hookrightarrow \X_2(K).$$

To exclude the set $B_0\cap\big( (\X_1)^o\times \SUm{3}/T\big)$ from $(\X_1)^o\times \SUm{3}/T$, and by such define the image of $(\X_1)^o\times \SUm{3}/T\hookrightarrow B_+\sqcup B_-$, we need to ensure that there are two roots to the equation $t^2-Pt+Q$.  This is satisfied if $\tr{\xb_1\xb_2\xb_1^{-1}\xb_2^{-1}}-\tr{\xb_2\xb_1\xb_2^{-1}\xb_1^{-1}}\not=0$.  Computing this expression for $\xb_1$ given above, and imposing the condition that $\mathrm{det}(\xb_2)=1$ we conclude
$$-\left(\lambda _1-\lambda _2\right) \left(\lambda _1-\lambda _3\right) \left(\lambda _2-\lambda
3\right) \left(x_{12} x_{23} x_{31}-x_{13} x_{21} x_{32}\right)\not=0.$$
However, for $X_1\in (\X_1)^o$ the expression $$\left(\lambda _1-\lambda _2\right) \left(\lambda _1-\lambda _3\right) \left(\lambda _2-\lambda_3\right)\not=0.$$ Thereofore, the semi-algebraic subset $$S=\{[\xb]\in \SUm{3}/T\ |\ x_{12}x_{23}x_{31}-x_{21}x_{32}x_{13}\not=0\},$$
defines a subset where $(\X_1)^o\times S$ is bijectively in correspondence to $B_+\sqcup B_-$.  This bijection is given by the mapping $\mathcal{U}$, so is continuous and proper.  Thus it is a homeomorphism and $B_+\sqcup B_-$ is topologically a product.  Now, since $B_+$ and $B_-$ are disjoint and homeomorphic and their union is a product, this implies each subset $B_+$ and $B_-$ is a non-trivial topological product as well.
\end{proof}

\begin{rem}
One can also consider the complex torus action $T_{\C}=\C^*\times \C^*$ on $\SLm{3}$ and ask about the quotient $\SLm{3}\aq T_{\C}$.  The collection of minors on $X_2$, 
\begin{align*}
&\{m_1=x_{11}, m_2=x_{22}, m_3=x_{33}, m_{-1}=x_{23} x_{32}-x_{22}x_{33},\\ 
& m_{-2}=x_{13} x_{31}-x_{11} x_{33}, m_{-3}=x_{12} x_{21}-x_{11}x_{22}, m_4=x_{12} x_{23} x_{31}\},
\end{align*} 
are all invariant under the complex torus.  Moreover they satisfy the following relation:
\begin{align*}
0=&-m_2^2 m_3^2 m_1^2+m_{-1} m_2 m_3 m_1^2+m_{-3} m_2 m_3^2 m_1-m_{-2} m_{-1} m_2 m_1+\\
&m_{-2} m_2^2m_3 m_1-m_{-3} m_{-1} m_3 m_1-m_{-1} m_4 m_1+2 m_2 m_3 m_4 m_1-m_4^2+\\
&m_{-3} m_{-2} m_{-1}-m_{-3}m_{-2} m_2 m_3-m_{-2} m_2 m_4-m_{-3} m_3 m_4+m_4.\end{align*}
It can be shown by solving these equations that $$\mathcal{M}=(m_1,m_2,m_3,m_{-1},m_{-2},m_{-3})$$ is a branched double cover $G\aq T_{\C} \to \C^6$.  It extends to a continuous injection $\mathcal{M}\times m_{4}$ into $\C^7$.

We can show that the first six invariants are transpose invariant and that the transpose defines an involution which switches the values of $m_4$ with respect to the roots of the relation above.

Since the determinant is 1, $m_k(X_2^{-1})=m_{-k}(X_2)$, and so when $X_2\in K$ we have $\overline{m_k(X_2)}=m_k(X_2^{-1})=m_{-k}(X_2)$.  So we can ``realify'' these coordinates, as we have done earlier, by $um_k=(m_k+m_{-k})/2$ and $um_{-k}=(m_k-m_{-k})/2i$.  Consequently we have a branched double cover $$\mathcal{UM}=(um_1,um_2,um_3,um_{-1},um_{-2},um_{-3}):K/T\to D_2^{\times 3}\subset \mathbb{R}^6, $$ over its image where $D_2$ is the closed real disc of radius 1 with center 0; also the mapping $\mathcal{UM}\times m_4$ gives a homeomorphism of $K/T$ onto its image in $\mathbb{R}^6\times \C$. We will eventually show this image is homeomorphic to a real six sphere, and the image of $\mathcal{UM}$ is a closed real 6 ball.  This will follow from our theorems, although it is not now apparent.

\end{rem}

\subsubsection{Contractible parts.}
Recall that a topological space $S$ is said to be $k$-connected, if the homotopy
groups $\pi_i(S)=0$ for $0\leq i\leq k$. The property of a topological manifold to be contractible can be
expressed in terms of its homotopy groups:  An $n$-dimensional topological manifold is contractible if and only if it is $n$-connected.

We now apply our main result to prove the following lemma.

\begin{lem}\label{contractiblelemma}
Both $B_+$ and $B_-$ are contractible.
\end{lem}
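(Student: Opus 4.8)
The plan is to exploit the product decomposition from Proposition \ref{productdecomp}, which gives
$$B_+\sqcup B_- \cong (\X_1)^o \times S,\qquad S=\{[\xb_2]\in\SUm{3}/T\ |\ x_{12}x_{23}x_{31}-x_{13}x_{21}x_{32}\neq 0\},$$
so that $B_+$ (and by the transpose homeomorphism $B_-$) is a connected component of this product, hence itself homeomorphic to a product of $(\X_1)^o$ with a component of $S$. Since $(\X_1)^o$ is the interior of the Weyl alcove, it is an open 2-ball, hence contractible. Therefore $B_\pm$ is contractible \emph{if and only if} the relevant component of $S$ is contractible, and the whole problem reduces to understanding the homotopy type of $S$, a semi-algebraic open subset of the compact space $\SUm{3}/T$.

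First I would fix notation: $B_\pm$ is a smooth (topological) manifold of dimension $8$ by the earlier discussion, so by the homotopy characterization of contractibility for manifolds recalled just before the lemma, it suffices to prove $B_+$ is $8$-connected, equivalently all homotopy groups $\pi_i(B_+)$ vanish. Via the product decomposition this is equivalent to showing the corresponding component of $S$ is weakly contractible (and being an open semi-algebraic set, hence a manifold, it is then genuinely contractible). The key step is then to analyze $S$ directly. Here is where I would invoke the main theorem, Theorem \ref{thm:main} (or rather its mechanism): $\SUm{3}/T$ is the $T$-quotient of the compact group $\SUm{3}$, and one can try to compute its rational or integral cohomology — in fact $\SUm{3}/T$ with $T$ the maximal torus is the full flag variety $\mathrm{Fl}(\C^3)$, which is well understood, but here $T$ acts by conjugation, not by translation, so $\SUm{3}/T$ is a different (6-dimensional) space. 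I would compute $H^*(\SUm{3}/T)$ using the fibration $T\to \SUm{3}\to \SUm{3}/T$ or, better, stratify $\SUm{3}$ by conjugacy type under $T$ and identify $S$ as the open dense stratum where the off-diagonal ``cyclic'' determinant minor is nonzero.

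Concretely, the approach for $S$ is: a matrix $\xb_2$ representing a point of $S$ has $x_{12}x_{23}x_{31}\neq x_{13}x_{21}x_{32}$; using the residual $T$-action (which rescales $x_{ij}$ by $\mu_i/\mu_j$) one can try to move to a slice, e.g.\ normalizing phases of some entries, and show the resulting slice — cut out inside $\SUm{3}$ by the unitarity equations and the open condition — deformation retracts onto a point, or is an open cell. I would first handle the case where $\xb_1$ is generic diagonal (interior of the alcove), note that the stabilizer is exactly $T$ and the only freedom is this torus, then parametrize the slice explicitly in terms of three complex off-diagonal ``free'' parameters subject to unitarity, and exhibit a straight-line or radial homotopy (possibly after the change of coordinates to the $u_{(k)}$'s, which are the global parallelizable parameters on $B_+$) contracting it. An alternative, cleaner route: since $B_+$ is a connected open semi-algebraic subset of $\mathbb{R}^8$ on which $(u_{(1)},\dots,u_{(-4)})$ restrict to global coordinates, one can try to show $\mathcal{U}(B_+)$ is star-shaped (or convex) with respect to the center point computed in subsection \ref{semialgebraic} — the representation with first eight coordinates $0$ and commutator a central element — which would immediately give contractibility; the earlier remark that $\mathcal{S}_+$ is defined by $\Delta\leq 0$ and $P^2-4Q<0$ and that $B_+\subset\mathcal{S}_+$ suggests looking for such a star-shaped structure.

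\medskip
The main obstacle I anticipate is precisely that last identification: showing $\mathcal{U}(B_+)$ is star-shaped, or equivalently that the slice description of $S$ retracts to a point, is not formal — one does not have $\mathcal{S}_+ = B_+$ (the authors explicitly flag this gap), so a naive ``$B_+$ is cut out by convex-looking inequalities'' argument is unavailable, and one must genuinely use the geometry of $\SUm{3}$-representations (the normalization of the off-diagonal entries and the unitarity constraints) to build the contracting homotopy and verify it stays inside $B_+$, i.e.\ that $u_{(5)}$ stays strictly positive and the commutator trace stays in the interior of $\X_1$ along the homotopy. Controlling the discriminant condition $\Delta<0$ (strict) along a proposed deformation, rather than merely at the endpoints, is the technical heart of the argument.
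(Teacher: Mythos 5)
There is a genuine gap: your argument never actually proves contractibility of anything --- it reduces the lemma (correctly, via Proposition \ref{productdecomp}) to the contractibility of the relevant piece of $S\subset\SUm{3}/T$, equivalently to star-shapedness of $\mathcal{U}(B_+)$, and then stops, acknowledging that this reduction is ``the technical heart.'' But that reduced statement is essentially as hard as the lemma itself: the paper explicitly states that the semi-algebraic description of $B_+$ is not known to be complete ($\mathcal{S}_+=B_+$ is left open), that star-shapedness is only suggested by experimentation, and that a direct invariant-theoretic analysis of $K/T$ is ``computationally non-trivial'' --- indeed the authors prove the contractibility of the hemispheres of $K/T$ as a \emph{consequence} of this lemma, not as an input to it, so your route inverts the logical order without supplying the missing construction. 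The auxiliary tools you mention also do not help: there is no fibration $T\to\SUm{3}\to\SUm{3}/T$ here because $T$ acts by conjugation, which is far from free, and in any case cohomological information about $\SUm{3}/T$ would not by itself yield the vanishing of all homotopy groups of an open subset of it.

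The idea you are missing is the transpose involution $\mathsf{t}:(\xb_1,\xb_2)\mapsto(\xb_1^{\mathsf{t}},\xb_2^{\mathsf{t}})$. The paper shows, by an explicit polar-decomposition computation on a representative whose first entry is symmetric (indeed diagonal), that the strong deformation retraction $\Phi_t$ of $\X_2(\SLm{3})$ onto $\X_2(\SUm{3})$ from Theorem \ref{thm:main} is $\mathsf{t}$-equivariant; by Proposition \ref{Kequivariantdeformation} it therefore descends to the $\Zm{2}$-quotients. Since the branched double cover $\mathcal{T}$ identifies $\X_2(\SLm{3})/\langle\mathsf{t}\rangle\cong\C^8$ (contractible) and $\X_2(\SUm{3})/\langle\mathsf{t}\rangle\cong\overline{B_+}$ (the branch locus being exactly the $\mathsf{t}$-fixed points), one concludes that $B_+$ has trivial homotopy groups, hence is contractible as an $8$-dimensional topological manifold, and $B_-\cong B_+$ via $\mathsf{t}$. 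This completely bypasses any explicit contraction, slice, or convexity argument inside $\mathcal{S}_+$, which is precisely the step your proposal leaves unresolved.
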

\begin{proof}
Let $\mathsf{t}:\X_2(G)\to\X_2(G)$ be the transpose involution.  If $\overline{B_+}$ is contractible, then so is $B_+=(\overline{B_+})^o$ and $B_-=\mathsf{t}(B_+)$.  This involution defines a $\mathbb{Z}_2$ group action $\langle \mathsf{t} \rangle$.  We have already seen that $\X_2(G)/\langle \mathsf{t} \rangle\cong \C^8$ and $\X_2(K)/\langle \mathsf{t} \rangle\cong \overline{B_+}$, since the branching locus is exactly the transpose fixed elements in both cases.

For all $t\in [0,1]$ and any $[[\rho]]_G\in \X_2(G)$, we will show $$\Phi_t\left([[\rho]]_G^{\mathsf{t}}\right)=\big(\Phi_t\left([[\rho]]_G\right)\big)^{\mathsf{t}}.$$
Given any $[[\rho]]_G$ there exists a representative $\rho=(g_1,g_2)$ so $g_1^{\mathsf{t}}=g_1$.  This follows since the semi-simplification of any such representative includes a diagonal element for its first component. In particular, with respect to the polar decomposition $g_1=d_1d_2$ where both $d_k$ are diagonal and $d_1$ is unitary, and $g_2=ke^p$.  Now, $\rho^{\mathsf{t}}\sim (k^{\mathsf{t}}d_1\overline{k}k^{\mathsf{t}}d_2 \overline{k},k^{\mathsf{t}}e^{-p})$ where $\sim$ means conjugate.  Thus 
\begin{align*}
\Phi_t\left([[\rho]]_G^{\mathsf{t}}\right)&= \left[\left[\left(k^{\mathsf{t}}d_1\overline{k}\left(k^{\mathsf{t}}d_2 \overline{k}\right)^{1-t},k^{\mathsf{t}}e^{-(1-t)p}\right)\right]\right]_G\\
&=\left[\left[\left(k^{\mathsf{t}}d_1\overline{k}k^{\mathsf{t}}d_2^{1-t} \overline{k},k^{\mathsf{t}}e^{-(1-t)p}\right)\right]\right]_G\\
&=\left[\left[\left(k^{\mathsf{t}}d_1d_2^{1-t}\overline{k},k^{\mathsf{t}}e^{-(1-t)p}\right)\right]\right]_G.
\end{align*}

On the other hand, $$\left(\Phi_t\left([[\rho]]_G\right)\right)^{\mathsf{t}}=\left([[(d_1d_2^{1-t},ke^{(1-t)p})]]_G\right)^{\mathsf{t}}=\left[\left[\left(k^{\mathsf{t}}d_1d_2^{1-t}\overline{k},k^{\mathsf{t}}e^{-(1-t)p}\right)\right]\right]_G.$$

Therefore, the deformation retraction of $\X_r(G)$ to $\X_r(K)$ descends to a deformation retraction of $\X_2(G)/\langle \mathsf{t} \rangle$ to $\X_2(K)/\langle \mathsf{t} \rangle$ by Proposition \ref{Kequivariantdeformation}.  Thus the homotopy groups of $\C^8$ and $B_+$ are the same; that is, $B_+$ has trivial homotopy groups (8-connected).  In turn this implies $B_+$ is contractible since $B_+$ is a topological manifold of real dimension 8.
\end{proof}

Let $p\in (\X_1)^o$ and define $B_{\pm}(K/T)=\big(\{p\}\times K/T\big)\cap B_{\pm}$.  Up to homeomorphism, these sets are independent of the choice of point $p$.  Also, $B_{+}(K/T)\sqcup B_{-}(K/T)\cong S$.

\begin{cor}
The ``upper hemisphere'' $B_{+}(K/T)$ and the ``lower hemisphere'' $B_{-}(K/T)$ are disjoint contractible subspaces of $K/T$.
\end{cor}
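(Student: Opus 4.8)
The plan is to read off both assertions from Proposition \ref{productdecomp} and Lemma \ref{contractiblelemma}, treating the corollary as an essentially formal consequence. Disjointness is immediate: by definition $B_\pm(K/T)=(\{p\}\times K/T)\cap B_\pm$, so $B_+(K/T)\cap B_-(K/T)\subseteq B_+\cap B_-=\emptyset$, since $B_+$ and $B_-$ are the loci $u_{(5)}>0$ and $u_{(5)}<0$ in $\X_2(K)$. That both are subspaces of $K/T$ is built into the definition (they sit inside the single slice $\{p\}\times K/T\cong K/T$), and in fact $B_+(K/T)\sqcup B_-(K/T)\cong S$ as already noted above.

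For contractibility I would first upgrade the product decomposition of Proposition \ref{productdecomp} to each piece separately, namely $B_+\cong(\X_1)^o\times B_+(K/T)$ and $B_-\cong(\X_1)^o\times B_-(K/T)$. This follows from the homeomorphism $B_+\sqcup B_-\cong(\X_1)^o\times S$: inside this product, $B_+$ and $B_-$ are complementary clopen subsets, while $(\X_1)^o$ is connected, so each of $B_+,B_-$ must be a union of sets of the form $(\X_1)^o\times(\text{component of }S)$, hence of the form $(\X_1)^o\times A_\pm$ for clopen subsets $A_\pm\subseteq S$ with $A_+\sqcup A_-=S$; intersecting with the slice over the point $p\in(\X_1)^o$ then identifies $A_\pm$ with $B_\pm(K/T)$. (Alternatively, this is exactly the ``non-trivial topological product'' statement already recorded in the proof of Proposition \ref{productdecomp}, with the $\pm$ labels tracked through the sign of $-(\lambda_1-\lambda_2)(\lambda_1-\lambda_3)(\lambda_2-\lambda_3)(x_{12}x_{23}x_{31}-x_{13}x_{21}x_{32})$, whose Vandermonde-type factor never vanishes and therefore has constant phase on the connected set $(\X_1)^o$.)

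With this in hand the conclusion is a one-line retraction argument. The set $(\X_1)^o$ is a nonempty open $2$-ball; fixing $q\in(\X_1)^o$, the maps $s\mapsto(q,s)$ and $(x,s)\mapsto s$ exhibit $B_\pm(K/T)$ as a retract of $(\X_1)^o\times B_\pm(K/T)\cong B_\pm$. Since $B_+$ and $B_-$ are contractible by Lemma \ref{contractiblelemma}, and a retract of a contractible space is contractible, both $B_+(K/T)$ and $B_-(K/T)$ are contractible. The only step that takes any thought is the first one---confirming that the product structure of $B_+\sqcup B_-$ descends to each $B_\pm$ and that the two factors are precisely $B_+(K/T)$ and $B_-(K/T)$---but this is forced by the connectedness of $(\X_1)^o$ and is already implicit in the proof of Proposition \ref{productdecomp}; everything else is standard point-set topology together with Lemma \ref{contractiblelemma}.
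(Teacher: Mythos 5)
Your proof is correct and follows essentially the same route as the paper: both arguments rest on the product decomposition $B_\pm\cong(\X_1)^o\times B_\pm(K/T)$ coming from Proposition \ref{productdecomp} together with the contractibility of $B_\pm$ from Lemma \ref{contractiblelemma}. The only difference is cosmetic --- the paper concludes by the long exact homotopy sequence of the trivial fibration with contractible fiber and total space, while you use the slightly more elementary fact that a retract of a contractible space is contractible; your explicit check that the product structure descends to each of $B_+$ and $B_-$ separately (via connectedness of $(\X_1)^o$) makes precise a step the paper leaves implicit.
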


\begin{proof}
By definition they are disjoint subspaces.  Moreover, each is the base space of a trivial fibration with contractible total space (from Lemma \ref{contractiblelemma}) and contractible fiber (interior of the alcove).  Hence the base space must be contractible as well (long exact sequence in homotopy).
\end{proof}

\begin{rem}
The transpose mapping also defines an involution on $\X_3(\SLm{2})$, which is likewise equivariant with respect to the deformation retraction.  Since  the transpose is ``trivial'' on $\X_2(\SLm{2})$ (that is, there is a transposition invariant representative in every extended orbit, see \cite{Fl} for details), this somewhat explains why one needs three $\SLm{2}$ matrices to get a double cover and only two $\SLm{3}$ matrices to get a similar structure. 
\end{rem}

\subsubsection{The topology of the decomposition.}
A topological space $M$ is said to have a \emph{non-trivial product decomposition}, if $M\cong M_1\times M_2$ where each factor is not a point.

\begin{prop}\label{luftlemma}
If an open real topological $n$-manifold $M$ is contractible for $n\geq 5$, and there exists non-trivial spaces $X$ and $Y$ so $M\cong X\times Y$, then $M\cong \mathbb{R}^n$.
\end{prop}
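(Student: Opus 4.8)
The plan is to reduce the statement to Stallings' topological recognition of Euclidean space: for $n\ge 5$, a contractible open $n$-manifold is homeomorphic to $\mathbb{R}^{n}$ precisely when it is simply connected at infinity. (Stallings established this in the piecewise-linear category; the topological version follows by combining it with the Kirby--Siebenmann product structure theorem, as in Siebenmann's work on ends of manifolds.) Granting this, everything comes down to showing that a non-trivial product which happens to be a contractible open $n$-manifold is automatically simply connected at infinity.

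I would begin with the elementary properties of the factors. Writing $M\cong X\times Y$ with neither factor a point, each of $X$ and $Y$ is a retract of $M$ (for instance $X\cong X\times\{y_{0}\}$, a retract via $(x,y)\mapsto(x,y_{0})$); hence each factor is contractible, and each is homeomorphic to a closed subspace of $M$, so each is, like $M$, a locally compact, $\sigma$-compact Hausdorff space. Since an open manifold is non-compact, at least one factor, say $Y$, is non-compact. Fix proper exhaustions $X=\bigcup_{i}A_{i}$ and $Y=\bigcup_{i}B_{i}$ by compacta with $A_{i}\subset\operatorname{int}A_{i+1}$ and $B_{i}\subset\operatorname{int}B_{i+1}$; then the sets $M\setminus(A_{i}\times B_{i})$ form a cofinal system of neighbourhoods of infinity of $M$, since any compact subset of $X\times Y$ has compact projections and so lies in some $A_{i}\times B_{i}$.

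The substantive step is to verify simple connectivity at infinity: given $i$, one must produce $j>i$ so that every loop in $M\setminus(A_{j}\times B_{j})$ is null-homotopic in $M\setminus(A_{i}\times B_{i})$. Such a loop $\gamma$ lies in $\bigl((X\setminus A_{j})\times Y\bigr)\cup\bigl(X\times(Y\setminus B_{j})\bigr)$, so over a sufficiently fine partition of $S^{1}$ into closed arcs it is carried, arc by arc, into one of these two pieces, and after merging consecutive arcs of the same type one may assume the two types alternate, whence every shared endpoint lies in $X\times(Y\setminus B_{j})$. The key observation is that on an arc carried into $(X\setminus A_{j})\times Y$ the $X$-coordinate avoids $A_{j}\supseteq A_{i}$, so along such an arc the $Y$-coordinate of $\gamma$ may be moved arbitrarily without the loop ever meeting $A_{i}\times B_{i}$, and symmetrically for arcs of the other type. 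Exploiting this together with the simple connectivity of the (contractible) factors, I would homotope $\gamma$ inside $M\setminus(A_{i}\times B_{i})$ into $X\times(Y\setminus B_{i})$, and then contract it there: first collapse the $X$-coordinate to a point of $X\setminus A_{i}$ (non-empty because $X$ is non-compact; the case in which $X$ is compact is handled separately), and finally collapse the now-unconstrained $Y$-coordinate, both moves remaining in $M\setminus(A_{i}\times B_{i})$. Stallings' theorem then yields $M\cong\mathbb{R}^{n}$. Equivalently, this whole step is the theorem of Luft that a contractible open $n$-manifold ($n\ge 5$) which is a non-trivial product is simply connected at infinity, and one may invoke it directly.

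The hard part is exactly this last step: making the arc-by-arc homotopies agree continuously along their common endpoints while guaranteeing that at every instant, whenever a point's $Y$-coordinate is being deformed its $X$-coordinate lies outside $A_{i}$ (and conversely), since a contraction of either factor inevitably drags its points across the compacta $A_{i}$ and $B_{i}$. One also needs to choose the exhaustions with some care — e.g.\ with connected complements where possible — and to dispose of the degenerate configurations: $\gamma$ lying entirely in one of the two pieces, and the case in which the complementary factor $X$ is itself compact, where the neighbourhoods of infinity eventually take the simpler form $X\times(Y\setminus B_{j})$ and a separate argument (using that $M$ is boundaryless) is required. It is a standard but genuinely delicate ``sliding off the end'' argument, and this is the step I expect to be the main obstacle.
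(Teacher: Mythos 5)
Your proposal takes essentially the same route as the paper: the paper's entire ``proof'' is a citation to Luft \cite{Lu}, and your reduction to a Stallings-type recognition theorem plus simple connectivity at infinity of a non-trivial product is exactly the content of that reference, which you yourself note can be invoked directly. The admittedly unfinished ``sliding off the end'' step is therefore not a gap relative to the paper, which likewise defers that argument entirely to \cite{Lu}.
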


For a proof of this proposition see \cite{Lu} and references therein.

We now prove
\begin{lem}
$B_+$ and $B_-$ are homeomorphic to open real 8-balls; that is, are open cells.
\end{lem}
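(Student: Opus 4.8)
The plan is to combine the contractibility established in Lemma \ref{contractiblelemma} with the non-trivial product decomposition from Proposition \ref{productdecomp} and then invoke Proposition \ref{luftlemma}. First I would recall that $B_+$ (and symmetrically $B_-$) is an open subset of $\X_2(K)$ consisting entirely of irreducible, non-transpose-fixed characters, and that from the semi-algebraic description in Section \ref{semialgebraic} it is an $8$-dimensional topological manifold without boundary, hence an open real $8$-manifold; in particular $n = 8 \geq 5$, so the dimension hypothesis of Proposition \ref{luftlemma} is met. Lemma \ref{contractiblelemma} gives that $B_+$ is contractible. Proposition \ref{productdecomp} gives $B_+ \sqcup B_- \cong (\X_1)^o \times S$, and since $B_+$ and $B_-$ are disjoint homeomorphic pieces whose union is this product over the connected base $(\X_1)^o$, each of $B_+$ and $B_-$ is itself homeomorphic to $(\X_1)^o \times B_+(K/T)$ (equivalently $(\X_1)^o \times B_-(K/T)$), a genuine non-trivial product: the factor $(\X_1)^o$ is the interior of the Weyl alcove, which is an open $2$-ball and certainly not a point, while the fibre $B_\pm(K/T)$ is a nonempty space of positive dimension (it carries the remaining six coordinates $u_{(-k)}$ etc.), hence also not a point.

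With these three inputs in hand, the proof is essentially immediate: apply Proposition \ref{luftlemma} with $M = B_+$, $X = (\X_1)^o$, and $Y = B_+(K/T)$ to conclude $B_+ \cong \mathbb{R}^8$, i.e. $B_+$ is an open real $8$-ball (an open $8$-cell). Then since the transpose involution $\mathsf{t}$ carries $B_+$ homeomorphically onto $B_-$, the same conclusion holds for $B_-$.

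The only point requiring a little care — and the main obstacle — is verifying cleanly that the product decomposition of $B_+ \sqcup B_-$ descends to a genuine non-trivial product decomposition of each individual piece, with both factors being honestly nonempty and not single points. For the first factor this is clear since $(\X_1)^o$ is an open $2$-ball. For the fibre $B_+(K/T)$ one must note it is nonempty: as observed just before the Corollary, $B_+(K/T) \sqcup B_-(K/T) \cong S$ and the concrete representation exhibited in Section \ref{semialgebraic} (with commutator $\mathrm{diag}(e^{i\alpha},e^{i\beta},e^{-i(\alpha+\beta)})$ realizing $u_{(5)} \neq 0$) shows both halves are nonempty; and being the base of a fibration with contractible total space $B_+$ and contractible fibre $(\X_1)^o$ it is itself contractible (the Corollary), hence in particular a nonempty space that is not a single point once one checks it is positive-dimensional, which follows from dimension count: $\dim B_+ = 8 = \dim (\X_1)^o + \dim B_+(K/T) = 2 + 6$. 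Thus the hypotheses of Proposition \ref{luftlemma} are genuinely satisfied and the conclusion follows.
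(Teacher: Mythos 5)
Your proof is correct and takes essentially the same route as the paper: contractibility from Lemma \ref{contractiblelemma}, the non-trivial product decomposition from Proposition \ref{productdecomp} (whose proof in the paper already records that each of $B_+$ and $B_-$ is itself a non-trivial product), and Proposition \ref{luftlemma} to conclude $B_+\cong\mathbb{R}^8$, with the transpose involution giving the same for $B_-$. The additional care you take in checking the hypotheses (dimension $8\geq 5$, connectedness of $(\X_1)^o$, non-triviality of both factors) simply makes explicit what the paper leaves implicit.
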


\begin{proof}
From Lemma \ref{contractiblelemma}, both $B_+$ and $B_-$ are contractible and from Proposition \ref{productdecomp} they each have non-trivial product decompositions.

Then  $B_+$ is homeomorphic to $\mathbb{R}^8$, by Proposition \ref{luftlemma}.

Therefore, reversing the homeomorphism $h:B_+\to\mathbb{R}^8$ we conclude that $B_+$ (and thus $B_-$) is a bounded open 8-cell.
\end{proof}

\begin{lem}
$B_0\cong S^7$.
\end{lem}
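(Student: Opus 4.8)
The plan is to exploit the gluing picture that has already emerged. We have the compact path-connected space $\X_2(K)$ sitting homeomorphically in $\mathbb{R}^9$ via $\mathcal{U}\times u_{(5)}$, decomposed as $B_+\sqcup B_0\sqcup B_-$ according to the sign of $u_{(5)}$; the two open pieces $B_\pm$ have just been shown to be open $8$-cells, and $B_0$ is the ``equatorial'' slice $u_{(5)}=0$. Since $B_\pm$ are bounded open $8$-cells with $B_0$ in their common frontier, the natural guess is that $\overline{B_+}$ is a closed $8$-ball whose boundary sphere is exactly $B_0$, whence $\X_2(K)=\overline{B_+}\cup_{B_0}\overline{B_-}$ is the union of two closed $8$-balls glued along their boundary $7$-sphere, giving $\X_2(K)\cong S^8$ and $B_0\cong S^7$ as a byproduct.

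The first step is to identify $\overline{B_+}$ with $\X_2(K)/\langle\mathsf{t}\rangle$, which was essentially already observed in the proof of Lemma \ref{contractiblelemma}: the transpose involution $\mathsf{t}$ fixes exactly the branching locus $P^2-4Q=0$, which on $\X_2(K)$ is precisely $B_0$, and $\mathsf{t}$ interchanges $B_+$ and $B_-$; hence the quotient map $\X_2(K)\to\X_2(K)/\langle\mathsf{t}\rangle$ restricts to a homeomorphism on $\overline{B_+}$. Next I would argue that $\overline{B_+}$ is a compact topological $8$-manifold with boundary. Interior points are handled by Lemma \ref{contractiblelemma} and the subsequent lemma ($B_+$ is an open $8$-cell). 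For boundary points one uses the local structure of the semi-algebraic set $\X_2(K)$: around any point of $B_0$ the set $\X_2(K)$ is a cone (by \cite{BCR} 9.3.6), and the two ``halves'' cut out by $u_{(5)}>0$ and $u_{(5)}<0$ are each locally Euclidean of dimension $8$ with $B_0$ as their common frontier; combined with the fact that $u_{(5)}=\pm\sqrt{Q-P^2/4}$ exhibits $\X_2(K)$ locally as a graph of $\pm$ a continuous function of the eight algebraically independent coordinates over the region $P^2-4Q\le 0$, this forces $B_0$ to be a topological $7$-manifold and $\overline{B_+}$ a manifold with boundary $\partial\overline{B_+}=B_0$.

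Once $\overline{B_+}$ is known to be a contractible compact topological $8$-manifold with boundary whose interior $B_+\cong\mathbb{R}^8$, standard results identify it with the closed $8$-ball $D^8$ (a contractible compact $n$-manifold with boundary whose interior is $\mathbb{R}^n$ is homeomorphic to $D^n$ for $n\ge 5$; this is the bounded/relative version of the argument already invoked via \cite{Lu}, using that the boundary is then a homotopy $7$-sphere and applying the generalized Poincar\'e conjecture in the topological category in dimension $7$). Therefore $B_0=\partial\overline{B_+}\cong S^7$. Finally, since $\mathsf{t}$ is a homeomorphism carrying $\overline{B_+}$ to $\overline{B_-}$ fixing $B_0$ pointwise, $\X_2(K)=\overline{B_+}\cup_{B_0}\overline{B_-}$ is $D^8\cup_{S^7}D^8\cong S^8$, recovering Theorem \ref{s8thm} as well.

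The main obstacle I anticipate is the second step: rigorously showing that $\overline{B_+}$ is a \emph{manifold with boundary} near $B_0$, i.e. that no pathology of the semi-algebraic set (extra local branches, non-manifold points along the discriminant) occurs. The graph description $u_{(5)}=\pm\sqrt{Q-P^2/4}$ over the locus $\Delta\le 0,\ P^2-4Q\le 0$ in the eight independent coordinates makes this plausible, but one must be careful that the eight coordinates $\mathcal{U}$ genuinely parametrize a full-dimensional region near each boundary point and that the two sheets meet transversally-enough to form a manifold collar; controlling this may require the explicit semi-algebraic description $\mathcal{S}_+$ from \S\ref{semialgebraic} together with the product decomposition of Proposition \ref{productdecomp} to pin down the local model of $B_0$ itself.
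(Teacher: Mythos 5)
The central step of your plan --- that $\overline{B_+}$ is a compact topological $8$-manifold whose boundary is exactly $B_0$ --- is precisely the point you do not prove, and the justification you sketch does not close it. The local cone structure of semi-algebraic sets (\cite{BCR} 9.3.6) only says that a neighborhood of a point is a cone over \emph{some} compact link; it yields local contractibility, not local Euclidean-ness, and gives no control on whether the link is a disc, so it cannot by itself produce a manifold-with-boundary chart along $B_0$. The graph picture $u_{(5)}=\pm\sqrt{Q-P^2/4}$ over a full-dimensional base would do the job only if one knew that the base $\mathcal{U}(\overline{B_+})$ is itself a nice ($8$-manifold-with-boundary) region near the relevant frontier points, i.e.\ essentially that $B_+=\mathcal{S}_+$ in the notation of \S\ref{semialgebraic}; but the paper explicitly leaves this open and remarks that it ``is not trivial to check and will not be needed in the sequel.'' Worse, $B_0$ contains all reducible unitary characters, exactly where one expects $\X_2(K)$ to be singular, so manifold-with-boundary structure there is the hard content of the lemma (after all, $B_0\cong S^7$ is the conclusion, so it cannot be fed in as a local hypothesis). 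A smaller but real issue in your last step: contractibility of $\overline{B_+}$ alone does not make $\partial\overline{B_+}$ a homotopy sphere (compact contractible manifolds can bound non-simply-connected homology spheres); you would need Lefschetz duality \emph{plus} a collar/end argument using the interior being $\mathbb{R}^8$ (simple connectivity at infinity) to get $\pi_1(\partial\overline{B_+})=1$ before invoking the topological Poincar\'e theorem in dimension $7$. That part is repairable, but the unproved manifold-with-boundary claim is a genuine gap.

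For comparison, the paper's proof deliberately avoids any local analysis along $B_0$. It first records that $\partial B_+=B_0=\partial B_-$ (since $B_+\sqcup B_0$ is closed in $\X_2(K)$), and then uses the global embedding: $\X_2(K)\hookrightarrow\mathbb{R}^9$ is a homeomorphism onto its image, and the projection $\mathcal{U}$ to the eight algebraically independent real coordinates restricts to a proper injection on $B_+\sqcup B_0$, hence an embedding into $\mathbb{R}^8$. Thus $B_+$ is carried onto a bounded open $8$-cell and $B_0$ maps bijectively --- hence, by compactness, homeomorphically --- onto the frontier of that cell, which is how the identification $B_0\cong S^7$ is made; no manifold-with-boundary structure, Lefschetz duality, or Poincar\'e conjecture enters. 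If you want to pursue your route, you would first have to establish the local structure of $\X_2(K)$ along $B_0$ (including at reducible characters), which is substantially harder than the global embedding argument the paper uses.
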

\begin{proof}
We note a technical fact:  since $\X_r(K)-B_-=B_+\sqcup B_0$ is closed, we have $\overline{B_+}-B_+=B_0$; that is $\partial B_+=B_0=\partial B_-$.  

We wish to conclude that $B_0$ is $S^7$.  For this to be true, it is essential that $B_+\sqcup B_0$ injects into $\mathbb{R}^8$.   If this was not the case we could not conclude that the boundary $B_0$ is a sphere: think of the one point gluing of two 2-spheres.  The complement of the boundary is two bounded open 2-cells but they do not give a 2-sphere.  What goes wrong is the 2-cell and its boundary do not inject into the plane.  However, $\mathcal{U}$ is a continuous map on $B_+$ that extends continuously to its boundary to give an injection (embedding).  

Moreover, it is another technical fact that since the mapping $\X_2(K)\hookrightarrow \mathbb{R}^9$ is a homeomorphism onto its image, that $B_+\sqcup B_0$ is homeomorphic onto its image which itself homeomorphically projects onto its image in $\mathbb{R}^8$ (since the restriction of the projection is injective and proper).

Thus since each $B_+$ and $B_-$ are open bounded real 8 balls given by $h:B_+ \to \mathbb{R}^8$, and the surjective boundary mapping $B_0 \to \partial\big( h(B_+)\big)\subset \mathbb{R}^8$ is also injective, it follows that $B_0\cong S^7$.
\end{proof}

\subsubsection{Proof of Theorem \ref{s8thm}}

\begin{proof}
Both $B_+$ and $B_-$ are homeomorphic to 8-balls and $B_+$ and $B_-$ are glued together at their common boundary $B_0\cong S^7$ and thus gives an 8-sphere.  Since $\X_2(\SUm{3})=B_+\sqcup B_0\sqcup B_-$, it follows that it is an 8-sphere as well.
\end{proof}

\begin{cor}\label{s6corollary}
$K/T\cong S^6$
\end{cor}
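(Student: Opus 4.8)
The plan is to extract the statement $K/T\cong S^6$ directly from the product decomposition established in Proposition \ref{productdecomp} together with the sphere decomposition of $\X_2(\SUm{3})$ just proved. Recall that $(\X_1)^o$, the interior of the Weyl alcove, is homeomorphic to an open $2$-ball (hence contractible), and that Proposition \ref{productdecomp} gives $B_+\sqcup B_-\cong (\X_1)^o\times S$, where $S=\{[X]\in\SUm{3}/T\mid x_{12}x_{23}x_{31}-x_{13}x_{21}x_{32}\neq 0\}$. The point is that $S$ sits inside $K/T=\SUm{3}/T$ as the complement of the vanishing locus of a single invariant polynomial, and I expect this vanishing locus to be exactly the ``equatorial'' part of $K/T$.

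First I would identify $K/T$ with $B_+(K/T)\sqcup B_0(K/T)\sqcup B_-(K/T)$, where $B_0(K/T)=(\{p\}\times K/T)\cap B_0$ is the slice of the branching locus. From the Corollary following Lemma \ref{contractiblelemma} we already know $B_+(K/T)$ and $B_-(K/T)$ are disjoint contractible subspaces, and $B_+(K/T)\sqcup B_-(K/T)\cong S$. Next I would argue that each of $B_\pm(K/T)$ is an open real $6$-cell: indeed $B_\pm \cong (\X_1)^o\times B_\pm(K/T)$ is an open $8$-ball (by the Lemma identifying $B_+\cong \mathbb{R}^8$) which decomposes non-trivially as a product with the open $2$-cell $(\X_1)^o$; so $B_\pm(K/T)$ is a contractible open $6$-manifold that itself product-decomposes non-trivially against $\mathbb{R}^8$... more carefully, I would instead observe that $B_\pm(K/T)$ is contractible and that $B_\pm\cong (\X_1)^o\times B_\pm(K/T)\cong \mathbb{R}^8$ forces $B_\pm(K/T)\cong \mathbb{R}^6$ by the same Luft-type argument (Proposition \ref{luftlemma}) applied to the $6$-manifold $B_\pm(K/T)$, once one knows it is an open $6$-manifold with a non-trivial product decomposition — here I would need to exhibit such a decomposition of $B_\pm(K/T)$ itself, or alternatively invoke that $\mathbb{R}^8 \cong \mathbb{R}^2\times B_\pm(K/T)$ together with the fact that a factor of $\mathbb{R}^n$ that is an open contractible manifold of dimension $\geq 5$ is $\mathbb{R}^{\dim}$.

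Then, exactly as in the proof of Theorem \ref{s8thm}, I would show $B_0(K/T)\cong S^5$: the map $\mathcal{UM}$ (or more simply the restriction of $\mathcal{U}$ to the $K/T$-slice) embeds $B_+(K/T)\sqcup B_0(K/T)$ into $\mathbb{R}^6$ as a closed $6$-cell together with its boundary, the boundary mapping $B_0(K/T)\to \partial(h(B_+(K/T)))$ is a continuous bijection from a compact space, hence a homeomorphism onto $\partial(\text{open }6\text{-ball})\cong S^5$. Finally, $K/T=\SUm{3}/T$ is the gluing of the two closed $6$-balls $\overline{B_+(K/T)}$ and $\overline{B_-(K/T)}$ along their common boundary $B_0(K/T)\cong S^5$, hence $K/T\cong S^6$.

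The main obstacle I anticipate is the same subtlety flagged in the proof of the preceding lemma: ensuring that $B_+(K/T)\sqcup B_0(K/T)$ genuinely \emph{injects} into $\mathbb{R}^6$ (so that the boundary really is a sphere and not, say, a one-point union of spheres). This requires that the invariant coordinates $\mathcal{UM}$ restricted to the $K/T$-slice extend continuously across $B_0(K/T)$ and remain injective there — which should follow from the global homeomorphism $\X_2(K)\hookrightarrow\mathbb{R}^9$ restricted to the slice $\{p\}\times K/T$ being proper and injective, but it needs to be checked that this slice is cut out cleanly and that properness of the restricted projection to $\mathbb{R}^6$ survives. A secondary technical point is justifying that $B_\pm(K/T)$, as a genuine $6$-manifold, satisfies the hypotheses of Proposition \ref{luftlemma} (dimension $\geq 5$, open, contractible, non-trivial product); extracting the non-trivial product decomposition of the $6$-dimensional factor from that of the $8$-dimensional total space is the part that requires a little care rather than pure formality.
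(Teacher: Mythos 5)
Your route is genuinely different from the paper's, and it is exactly the ``direct'' approach that the remark following this corollary says the authors wanted to avoid: the paper does not re-run the ball-gluing machinery one dimension down. Instead it deduces the corollary from Theorem \ref{s8thm} in one step: fixing $p\in(\X_1)^o$ fixes the two coordinates $u_{(1)},u_{(-1)}$, and the slice $\{p\}\times K/T\subset(\X_1)^o\times K/T\subset\X_2(K)\cong S^8$ is then a six-sphere; the statement that the hemispheres $B_{\pm}(K/T)$ are open $6$-balls comes out as a \emph{consequence} (the paper notes it is ``a priori not clear''), not as an ingredient.

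The genuine gap in your argument is precisely at the step you flag: showing $B_{\pm}(K/T)\cong\mathbb{R}^6$. Proposition \ref{luftlemma} requires a non-trivial product decomposition of the contractible open manifold \emph{itself}; Proposition \ref{productdecomp} gives such a decomposition for the $8$-dimensional $B_{\pm}$, but not for the $6$-dimensional factor $B_{\pm}(K/T)$, and you do not exhibit one. Your fallback principle --- that a contractible open manifold of dimension $\geq 5$ which is a factor of Euclidean space must be Euclidean --- is false. The Whitehead manifold $W^3$ (with $W\times\mathbb{R}\cong\mathbb{R}^4$, $W\not\cong\mathbb{R}^3$) is the low-dimensional prototype, and the phenomenon persists in dimension $6$: if $C$ is a compact contractible $6$-manifold whose boundary is a non-simply-connected homology $5$-sphere (such $C$ exist by Kervaire's results), then $W=\mathrm{int}\,C$ is not simply connected at infinity, hence $W\not\cong\mathbb{R}^6$, yet $C\times[0,1]$ is a compact contractible $7$-manifold with simply connected boundary, hence a $7$-ball, so $W\times\mathbb{R}\cong\mathbb{R}^7$ and $W\times\mathbb{R}^2\cong\mathbb{R}^8$. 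So contractibility of $B_{\pm}(K/T)$ together with $\mathbb{R}^2\times B_{\pm}(K/T)\cong\mathbb{R}^8$ does not yield $B_{\pm}(K/T)\cong\mathbb{R}^6$; you would need extra input (an actual non-trivial splitting of $B_{\pm}(K/T)$, or simple connectivity at infinity). A secondary point you would also have to check is that the closure of $B_+(K/T)$ \emph{within the slice} has frontier exactly $B_0(K/T)$, i.e.\ that every equatorial point of the slice is a limit of hemisphere points with the same $X_1$-eigenvalue data; this holds in the ambient $\X_2(K)$ but does not formally restrict to the slice. Deducing the corollary from the already-proved $S^8$ theorem, as the paper does, sidesteps both issues.
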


\begin{proof}
Fixing two coordinates in an eight sphere gives a six sphere.  Consequently, 
since $(\X_1)^o\times K/T\subset S^8$ fixing any point $p\in (\X_1)^o$ gives a homeomorphic copy of $K/T$ which is thus $S^6$.  Moreover, the hemispheres $B_{\pm}(K/T)\subset K/T$ are each open 6 balls (a priori not clear).

\end{proof}

\begin{rem}
A direct proof of Corollary \ref{s6corollary} using constructive invariant theory would come from showing the upper and lower hemispheres are contractible and then using our explicit descriptions of $\C[G/T_{\C}]$ and $\mathbb{R}[K/T]$ to prove the boundary is $S^5$ $($this is computationally non-trivial$)$.  Also one could analyze the formula for the branching locus of the ``minors mapping'' over the triple product of the real closed 2-disc $D_2$ $($again this is non-trivial computationally$)$.  

Fortunately, Theorem \ref{s8thm} allows us to avoid these computations.  Our proof of Corollary \ref{s6corollary} is an example of what may be called ``topological invariant theory''; that is, the study which endeavors to relate the topologies of quotients of $G$-varieties to the topology of real semi-algebraic $K$-quotients via constructive invariant-theoretic methods. 
\end{rem}

\begin{rem}
One may naturally ask if the balls $B_+$ and $B_-$ are perhaps star-shaped in the trace coordinates, as are the corresponding subsets in $\X_3=\R_3(\SUm{2})/\SUm{2}$  $($from the depiction of $\X_1(\SLm{3})$ and the partial decomposition, they are clearly not convex$)$.  To answer the question in the case of $\X_3$ we used a complete description of its semi-algebraic structure, and a slice. In the case of $\R_2(\SUm{3})/\SUm{3}$ we do not know if our semi-algebraic description is complete, and do not have a slice.  
Experimentation with the decomposition $B_+ \cong \X_1^o\times B_+(K/T)$ suggests that the sets $B_+$ and $B_-$ are star-shaped.  
Such questions concern the classical geometry of these spaces, clearly dependent on one's choice of global coordinates.
\end{rem}

\subsection{Beyond $(r,n)=(r,1),(1,n),(2,2),(2,3),(3,2)$.}
We claim the other cases of $\X_r(K)$ not considered above have non-manifold singularities.

From \cite{BC}, we know that the rank greater than or equal to 4 cases of $\SUm{2}$ all have complex projective cone singularities.  Since $\SUm{2}\hookrightarrow \SUm{n}$ for $n\geq2$ and preserves a representation being abelian, one would then expect the same result (with perhaps worse singularities, but not better) for all $\X_r(K)$ with rank greater than 3.

This leads to the following conjecture which we address (with other topics) in a coming paper:

\begin{conj}
The only cases where $\X_r(\SUm{n})$ is a topological manifold are $(r,n)=(r,1),(1,n),(2,2),(2,3),(3,2)$.
\end{conj}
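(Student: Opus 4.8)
The plan is to localise and then compute. By the explicit homeomorphisms established earlier in this section, the five listed pairs $(r,n)$ give character spaces homeomorphic to closed balls or spheres, so nothing remains to check for those; it therefore suffices to prove that for every other $(r,n)$ the space $\X_r(\SUm{n})$ has a point at which it fails to be locally Euclidean, with or without boundary. I would locate such a point already at the trivial representation.

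First I would invoke the Luna slice theorem in its real/semi-algebraic form (see \cite{Sch1}): near the class of a poly-stable $\rho$ with stabiliser $K_\rho$, the space $\X_r(K)$ is homeomorphic to a neighbourhood of the origin in the cone $H^{1}(\F_r,\mathfrak{k}_{\Ad_\rho})/K_\rho$, the slice representation being, for a free group, just $\mathfrak{k}^{\times r}$ modulo the tangent space to the conjugation orbit. At the trivial representation $K_\rho=K=\SUm{n}$, the adjoint twist is trivial and the orbit is a point, so near that class
$$\X_r(\SUm{n})\ \cong\ \mathfrak{su}(n)^{\times r}/\SUm{n}\ =\ C(L_{r,n}),\qquad L_{r,n}:=S\big(\mathfrak{su}(n)^{\times r}\big)/\SUm{n},$$
the cone on the compact semi-algebraic \emph{link} $L_{r,n}$ (the quotient of the unit sphere), which has dimension $(r-1)(n^{2}-1)-1$ for $r\geq 2$.

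Next comes the elementary observation that a cone $C(L)$ on a compact space is a topological manifold at its vertex only if $\tilde H_{*}(L)$ is either identically zero (a boundary point) or the reduced homology of a sphere of dimension $\dim L$ (an interior point) --- this is immediate from $H_{*}(C(L),C(L)\setminus\{v\})\cong \tilde H_{*-1}(L)$ and comparison with $\mathbb{R}^{m}$ and $\mathbb{R}^{m}_{\geq 0}$. So the entire problem reduces to: for the pairs $(r,n)$ not on the list, find a degree $0<k<\dim L_{r,n}$ with $\tilde H_{k}(L_{r,n})\neq 0$. For $n=2$ this is the content of \cite{BC}: here $\mathfrak{su}(2)^{\times r}/\SUm{2}=(\mathbb{R}^{3})^{\times r}/\mathrm{SO}(3)$ is a two-fold cover of the space of positive semi-definite symmetric $r\times r$ matrices of rank $\leq 3$ and trace $1$, branched over the rank $\leq 2$ locus; for $r=2,3$ the base degenerates so that the vertex is smoothed (one gets $D^{3}$, respectively $S^{5}$), but for $r\geq 4$ the determinantal stratification of the base persists in $L_{r,2}$ and forces non-trivial middle-degree homology. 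For $n\geq 3$ there is no explicit slice, so I would extract $\tilde H_{*}(L_{r,n})$ either from the Procesi-type presentation of $\C[\X_r(\SLm{n})]$ --- whose leading forms at the origin cut $L_{r,n}$ out as a real determinantal-type projective set with a Schubert-like stratification --- or by an equivariant-cohomology computation of $H^{*}_{\SUm{n}}\big(S(\mathfrak{su}(n)^{\times r})\big)$ in the spirit of Baird \cite{Ba}, stratifying by orbit type to descend to $H^{*}(L_{r,n})$. In both routes the pairs $(2,n)$ with $n\geq 4$ and the pairs $(r,n)$ with $n=3,\,r\geq 3$ or $n\geq 4,\,r\geq 2$ are exactly those for which the relevant determinantal locus fails to be a complete intersection, and a direct computation then exhibits a non-vanishing intermediate Betti number.

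I expect the main obstacle to be this last homology computation for $n\geq 3$: without a slice one must read the homology of $L_{r,n}$ off the invariant-theoretic presentation, which becomes combinatorially heavy as $n$ grows and has to be controlled uniformly in $r$. There is also a genuine if secondary pitfall --- a singular real algebraic cone can still be a topological manifold, as happens for $(r,n)=(3,2)$, where the ``branched double cover over the boundary'' miraculously makes the vertex of $C(L_{3,2})$ Euclidean --- so the argument cannot simply cite algebraic singularity of $\mathfrak{su}(n)^{\times r}/\SUm{n}$ but must genuinely produce non-trivial homology in the link. With that in hand, every $(r,n)$ outside the list has a non-manifold point, while the five inside it are manifolds by the computations above, which is the asserted classification.
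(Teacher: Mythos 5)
You should first note that the statement you are trying to prove is stated in the paper as a \emph{conjecture}: the authors offer no proof, only the heuristic that by \cite{BC} the cases $n=2$, $r\geq 4$ have cone singularities, and that the embedding $\SUm{2}\hookrightarrow\SUm{n}$ (which preserves abelian representations) should force singularities at least as bad in the remaining cases; they explicitly defer the question to a later paper. So there is no paper proof to compare against, and your text should be judged as a proof of an open statement.

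As such, your proposal is a reasonable and well-structured plan of attack --- the localization at the trivial representation, the identification of the local model with the cone $\mathfrak{su}(n)^{\times r}/\SUm{n}$ over the link $L_{r,n}$, the local-homology criterion $H_{*}(C(L),C(L)\setminus\{v\})\cong\tilde H_{*-1}(L)$, and the explicit acknowledgment that algebraic singularity of the cone is not enough (as the cases $(3,2)$ and $(2,3)$ show) are all sound --- but it is not a proof, because the decisive step is missing. For every $(r,n)$ outside the list you must actually exhibit a nonzero reduced Betti number of $L_{r,n}$ in an intermediate degree, and this is precisely what you do not do: for $n\geq 3$ you only gesture at ``determinantal-type'' stratifications and failure of complete intersection, with no computation even in the smallest new cases such as $(2,4)$ or $(3,3)$, and no argument that the needed homology can be controlled uniformly in $r$; and even for $n=2$, $r\geq 4$, the statement you attribute to \cite{BC} concerns the character variety itself, so translating it into the claim ``$\tilde H_{k}(L_{r,2})\neq 0$ for some $0<k<\dim L_{r,2}$'' still requires an argument (the local cone model makes this plausible, but you should write it down). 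There is also a structural risk you should flag: if for some pair the link turned out to be a homology sphere without being a sphere, your homology criterion would be inconclusive at the trivial representation and you would need a finer invariant (e.g.\ the local fundamental group of $C(L)\setminus\{v\}$) or a different candidate singular point. In short: correct framework, genuinely incomplete argument, for a statement the paper itself leaves as a conjecture.
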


\section{Some remarks on groups other than $\F_r$}

As mentioned in the introduction, the main theorem does not generalize from free groups to arbitrary finitely generated groups. We now give examples of groups other than $\F_r$ where Theorem \ref{thm:main} fails, and examples where it holds. We let $\zeta(G)$ denote the center of a group $G$.

It would be interesting to obtain group theoretic conditions on an arbitrary finitely generated group $\Gamma$ that would ensure that $\X_{\Gamma}(G)=\hm(\Gamma,G)\aq G$ and $\X_{\Gamma}(K)=\hm(\Gamma,K)/K$ are homotopy equivalent, or even determine the values of $k$ for which the homotopy groups $\pi_k(\X_{\Gamma}(G))$ and $\pi_k(\X_{\Gamma}(K))$ are isomorphic.

\subsection{Surface groups and holomorphic vector bundles}

Let $\Gamma_{g}=\langle\xt_{1},\yt_{1},...,\xt_{g},\yt_{g},\mathtt{p}\ |\ \Pi_{i=1}^{g}[\xt_{i},\yt_{i}]=\mathtt{p}\rangle$,
where $\mathtt{p}$ is central. In these terms define \[
\X_{\Gamma_{g}}^{d}(\SLm{n})=\hm_{d}(\Gamma_{g},\SLm{n})\aq\SLm{n},\]
 where $\hm_{d}(\Gamma_{g},\SLm{n})$ is the set of all homomorphisms
that map the central element $\mathtt{p}$ to a fixed element $d\in\mathbb{Z}_{n}\cong\zeta(\SLm{n})$.
This is the $\SLm{n}$-\emph{twisted character variety} of $\pi_{1}(\Sigma_{g})$
where $\Sigma_{g}$ is a genus $g$ closed surface. We note that topologically,
$\Sigma_{g}$ is a connected sum of $g$ tori and its fundamental
group is $\Gamma_{g}$ when $\mathtt{p}=1$, and in fact 
$\X_{\Gamma_{g}}^{0}(\SLm{n})=\X_{\pi_{1}(\Sigma_{g})}(\SLm{n})$, for $d=0$.

Each $\X_{\Gamma_{g}}^{d}(\SLm{n})$ is a component of the quotient
\[
\hm(\Gamma_{g},\SLm{n})\aq\SLm{n},\]
 and is smooth if $n$ and $d$ are coprime since in these cases the
representations are irreducible. Replacing $\SLm{n}$ by $\mathrm{SU}(n)$
we have the related spaces $\X_{\Gamma_{g}}^{d}(\mathrm{SU}(n))=\hm_{d}(\Gamma_{g},\mathrm{SU}(n))/\mathrm{SU}(n)$,
which we refer to as the $\mathrm{SU}(n)$-\emph{twisted character
variety} of $\pi_{1}(\Sigma_{g})$.

By endowing $\Sigma_{g}$ with a complex structure (so $\Sigma_{g}$
becomes a Riemann surface), one can consider holomorphic vector bundles
over it $E\to\Sigma_{g}$; these are topologically classified by two
invariants, their dimension or rank $r$ and their degree $d$, which
is the number of zeros minus poles (with multiplicities) of a meromorphic
section of its determinant, or top exterior power, $\det E=\bigwedge^{r}E$.
It can be proved that the space of all such vector bundles over $\Sigma_{g}$,
of rank $n$, degree $d$, and such that $n$ and $d$ are coprime and
$\det E$ is a fixed line bundle, denoted $\mathcal{N}_{n,d}$, carries
a natural structure of smooth projective variety of dimension $n^{2}(g-1)+1$.
Moreover, Narasimhan and Seshadri \cite{NS} showed that there is
a homeomorphism \[
\mathcal{N}_{n,d}\cong\hm_{d}(\Gamma_{g},\mathrm{SU}(n))/\mathrm{SU}(n).\]

The Poincar\'e polynomials of $\mathcal{N}_{n,d}$ have been computed.
In particular, for genus $g=2$ and odd degree (see \cite{DR} or
\cite{AB}), we have: \[
P_{t}(\mathcal{N}_{2,d})=1+t^{2}+4t^{3}+t^{4}+t^{6}.\]

Another natural holomorphic object associated with $\Sigma_{g}$ is
the moduli space of Higgs bundles over $\Sigma_{g}$ (see \cite{Hi}).
These are pairs $(E,\phi)$ consisting of a holomorphic vector bundle
$E\to\Sigma_{g}$ together with a global section of $\End E$-valued
holomorphic one forms, $\phi\in H^{0}(\Sigma_{g},\End E\otimes\Omega^{1})$,
where $\Omega^{1}$ denotes the canonical bundle of $\Sigma_{g}$
(the holomorphic cotangent bundle). Using infinite dimensional methods
of symplectic and holomorphic geometry, it can be showed that the
moduli space of (polystable) Higgs bundles of rank $n$, coprime degree
$d$ and fixed determinant, denoted $\mathcal{M}_{n,d}$, is a connected
smooth hyperk\"ahler manifold of dimension $n^{2}(2g-2)+2.$ Moreover,
for any coprime $n$ and $d$: \[
\mathcal{M}_{n,d}\cong\X_{\Gamma_{g}}^{d}(\SLm{n}).\]
We note that each $\mathcal{M}_{n,d}$ is generally a component of
$$\hm(\Gamma_{g},\SLm{n})\aq\SLm{n},$$ and we have for any integer
$k$, $\mathcal{M}_{n,nk}\cong\mathcal{M}_{n,0}\cong\X_{\pi_{1}(\Sigma_{g})}(\SLm{n})$,
which is the case when these moduli spaces are singular. Hitchin (see
\cite{Hi}) also computed the Poincar\'e polynomial of $\mathcal{M}_{2,d}$,
which is for $g=2$ and odd $d$ given by \[
P_{t}(\mathcal{M}_{n,d})=1+t^{2}+4t^{3}+2t^{4}+34t^{5}+2t^{6}.\]
See \cite{BGG} and \cite{HaRo} for background, constructions, definitions,
and theorems.

Comparing the Poincar\'e polynomials of these two corresponding components,
one concludes that they cannot be homotopic, let alone deformation
retracts of each other.

Any deformation retraction between $\hm_{d}(\Gamma_{g},\SLm{2})\aq\SLm{2}$
and $\hm_{d}(\Gamma_{g},\mathrm{SU}(2))/\mathrm{SU}(2)$ would certainly
restrict to components and thus each corresponding component would
be homotopy equivalent, which we just showed was not the case in general
(for the components corresponding to odd degree in the $g=2$ example).

Recently, Daskalopoulos and Wentworth \cite{DW} have shown, by explicit computation, that the Poincar\'e polynomials of $\X_{\pi_{1}(\Sigma_{g})}(\mathrm{SU}(2))$ and $\X_{\pi_{1}(\Sigma_{g})}(\SLm{2})$
(respectively, the singular moduli spaces of rank 2 trivial determinant 
vector bundles and Higgs bundles) are not the same.
Therefore, this provides a more direct example of a finitely generated group
$\pi_{1}(\Sigma_{g})$ for which our main theorem does not apply, in the case $G=\SLm{2}$ and $K=\mathrm{SU}(2)$.

\subsection{Free abelian groups}
On the other hand, a natural example to consider is the case of $g=1$ and $d=0$ or more generally abelian representations.  These prove to be non-counterexamples, as described below.

Let $\pi$ be a free abelian group of rank $r$, so that it has the
presentation\[
\pi=\left\langle \xt_{1},...,\xt_{r}\ |\ \xt_{j}\xt_{k}\xt_{j}^{-1}\xt_{k}^{-1}=1\quad\mbox{for all }1\leq j<k\leq r\right\rangle .\]
Let $K$ be a compact linear Lie group $K$ and $G\subset \mathrm{GL}(n,\mathbb{C})$
be its complexification. Then $\hm(\pi,K)$ can be identified with
the subset of $\R_r(K)$ consisting of $r$-tuples of matrices
that pairwise commute, and similarly for $\hm(\pi,G)$. We note that although $\X_{\pi}(G)=\hm(\pi,G)\aq G=\mathrm{Spec}(\C[\hm(\pi,G)]^G)$ is a potentially non-reduced affine scheme, we only consider its complex points, corresponding to $\mathrm{Spec}_{max}(\C[\X(\pi,G)]\cong \mathrm{Spec}_{max}(\C[\X(\pi,G)]/\sqrt{0})$, in what follows.

\begin{prop}
There is a strong deformation retraction of $\hm(\pi,G)\aq G$ onto $\hm(\pi,K)/K$.
\end{prop}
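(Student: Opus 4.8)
The plan is to exploit the fact that on closed conjugation orbits an abelian representation can be diagonalised, which reduces both quotients to quotients of a torus by the Weyl group; the retraction then comes from restricting the polar-coordinate retraction of Section 3 to that torus. By the Remark of Section 2 we may assume $G$ (hence $K$) is connected. Fix a maximal torus $T_{\C}\subset G$, set $T=K\cap T_{\C}$, a maximal torus of $K$, and let $W=N_{K}(T)/T$ be the Weyl group, acting on $T_{\C}$ by conjugation. The first and most substantial step is to establish the homeomorphisms
\[
\hm(\pi,G)\aq G\;\cong\;T_{\C}^{\times r}/W,\qquad\qquad \hm(\pi,K)/K\;\cong\;T^{\times r}/W.
\]
For the compact side this is classical: every commuting $r$-tuple in the connected compact group $K$ lies in a common maximal torus, and two tuples of $T^{\times r}$ are $K$-conjugate precisely when they are $W$-conjugate, so the inclusion $T^{\times r}\hookrightarrow\hm(\pi,K)$ induces a continuous bijection $T^{\times r}/W\to\hm(\pi,K)/K$, a homeomorphism since the source is compact and the target Hausdorff. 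On the algebraic side, the closed conjugation orbits in $\hm(\pi,G)$ are exactly the polystable representations, which for an abelian group means the completely reducible ones, i.e.\ the simultaneously diagonalisable commuting tuples; hence every point of $\hm(\pi,G)\aq G$ has a representative in $T_{\C}^{\times r}$, and two such tuples are $G$-conjugate if and only if $W$-conjugate (if $g$ conjugates one into the other, replace $g$ by $zg$ with $z$ in the centraliser of the second tuple so that $zg\in N_{G}(T_{\C})$). Thus the natural morphism $T_{\C}^{\times r}/W\to\hm(\pi,G)\aq G$ is bijective and finite, hence a homeomorphism for the strong topology.

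For the second step, write $\mathfrak{t}=\mathrm{Lie}(T)$ and note that $\mathfrak{t}_{\C}=\mathrm{Lie}(T_{\C})=\mathfrak{t}\oplus i\mathfrak{t}$ with $i\mathfrak{t}\subset\mathfrak{p}$, so $T_{\C}=T\cdot\exp(i\mathfrak{t})$ is precisely the polar decomposition of $T_{\C}$ inherited from $G=K\cdot\exp\mathfrak{p}$. Consequently the maps $\phi_{t}$ of Proposition \ref{GKdeformation} carry $T_{\C}$ into itself and restrict to a strong deformation retraction of $T_{\C}$ onto $T$ --- on $T_{\C}\cong(\C^{*})^{\dim T}$ it is the coordinatewise map $z\mapsto z|z|^{-t}$. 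Since $\phi_{t}$ is $K$-equivariant and $W$ is implemented by conjugation by elements of $N_{K}(T)$, this restricted retraction is $W$-equivariant. Applying Proposition \ref{Kequivariantdeformation} with ambient space $T_{\C}$, subspace $T$, and group $W$ then yields a strong deformation retraction of $T_{\C}^{\times r}/W$ onto $T^{\times r}/W$; composing with the homeomorphisms of the first step gives the desired strong deformation retraction of $\hm(\pi,G)\aq G$ onto $\hm(\pi,K)/K$.

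I expect the main obstacle to be the algebraic half of the first step: one must know that the polystable abelian representations are exactly the diagonalisable ones (so that every closed orbit meets $T_{\C}^{\times r}$) and that the resulting bijective morphism $T_{\C}^{\times r}/W\to\hm(\pi,G)\aq G$ is a homeomorphism in the analytic topology, and neither point is entirely formal. It is also worth remarking why abelian groups genuinely need this separate treatment even though they do not furnish counterexamples to Theorem \ref{thm:main}: one cannot simply restrict the Section 3 deformation of $\R_{r}(G)$ to the commuting subvariety, because $\phi_{t}$ involves the Cartan involution and a commuting tuple need not be sent to a commuting tuple.
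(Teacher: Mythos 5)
Your proof follows essentially the same route as the paper's: identify $\hm(\pi,K)/K\cong T^{\times r}/W$ and $\hm(\pi,G)\aq G\cong T_{\C}^{\times r}/W$ (the paper obtains the latter via simultaneous upper-triangularization and conjugate-invariant deformation to the diagonal, you via polystability of completely reducible tuples, which is the same content), and then apply the $W$-equivariant restriction of the polar retraction of $G$ onto $K$ together with Proposition \ref{Kequivariantdeformation}. The additional details you supply --- that $G$-conjugate torus-valued tuples are $W$-conjugate and that the induced bijections are homeomorphisms --- are points the paper passes over silently, so your argument is, if anything, slightly more complete.
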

\begin{proof}

It is well known that any commuting collection of elements of a compact
Lie group $K$ lie on a fixed maximal torus $T\subset K$. Without loss of generality, we can assume $T$ consists of the diagonal matrices in $K$.  Moreover,
the conjugation action on such a torus is reduced to the action of
the Weyl group $W$. Therefore, we have
\[
\hm(\pi,K)/K\cong T^{\times r}/W.
\]

It is also a standard fact that any commuting collection of elements
of $G\subset \GLm{n}$ can be simultaneously put in upper
triangular form. Let $D\subset G$ be the maximal torus that contains $T$, on which $W$ acts. Consequently, $D$ consists of diagonal matrices as well.  We have:
$$
\hm(\pi,G)\aq G\cong D^{\times r}/W,
$$
since each upper triangular representation can be conjugate invariantly deformed to a diagonal one; thus they are in the orbit closure.

From the above identification, we clearly see that $\hm(\pi,G)\aq G$ deformation retracts to $\hm(\pi,K)/K$. Indeed, $D^{\times r}/W$ deformation retracts to $T^{\times r}/W$ by Proposition \ref{Kequivariantdeformation}, since the deformation retraction $(\C^*)^N\cong D\to T\cong (S^1)^N$ is $W$-equivariant where $N$ is the dimension of $D$.  Note that this coincides with the deformation retraction $G\to K$ upon restriction to the respective maximal tori.
\end{proof}

\section*{Appendix}

We now prove Lemma \ref{unitarylemma}:  If two $K$-valued free group representations are conjugate by an element in $G$, then they
are conjugate by an element in $K$. 

\begin{proof}

In \cite{DK} (page 152), it shown that any element of $K$ is $K$-conjugate to an element of a given maximal torus (this action is transitive on tori).  For matrix groups one such torus consists of diagonal elements (and all are $K$-conjugate).  Thus, any element in $K$ is $K$-conjugate to a diagonal matrix.

Now consider $\psi_{1}=(k_{1},..,k_{r})$ and $\psi_{2}=(k'_{1},..,k'_{r})$
in $\R_r(K)$ and suppose they are conjugate by an element in
$G$. Then $k_{1}$ and $k'_{1}$ have the same normal forms, in this
case diagonal.  Moreover, in general the action of the Weyl group can be realized by elements in $K$ from the normalizer of a fixed torus:  $N_K(T)/T$.  Thus permuting diagonal elements is likewise achievable by the $K$-conjugation action.  

So there exists $u$ and $u'$ in $K$ so $uk_{1}u^{-1}=u'k'_{1}(u')^{-1}$ since permutations of diagonal
entries of diagonal matrices is given by $K$-conjugation. Hence $k_{1}=u^{-1}u'k'_{1}(u')^{-1}u$, a 
$K$-conjugate. This implies that we may simultaneously diagonalize the
first entries of $\psi_{1}$ and a $K$-conjugate of $\psi_{2}$ using
a single element in $K$. First replace $(k'_{1},...,k'_{r})$ by its
conjugate by $u^{-1}u'$ (this only changes the representative of
$\psi_{2}$ in $[[\psi_{2}]]_K$), and then since the first
entries in the $r$-tuples are now equal we diagonalize by another element in $K$ (in this case $u$).

Now we claim that the $r$-tuples are in fact equal, or else there
is a permutation matrix (necessarily in $K$) that will make them
equal. After reconjugating by $u$, let $d_{1}$ be the diagonalization
of $k_{1}$. Since the $r$-tuples are $G$-conjugate, there exists
$g$ in $G$ so 
$$(d_{1},uk_{2}u^{-1},...,uk_{r}u^{-1})=(gd_{1}g^{-1},gu'k'_{2}u'^{-1}g^{-1},...,gu'k'_{r}u'^{-1}g^{-1}),$$
but then $d_{1}=gd_{1}g^{-1}$ is diagonal. Hence, $g$ is necessarily
a matrix that permutes the diagonal entries since conjugation preserves
the eigenvalues, and all such permutation matrices (which preserve the give torus) are in $K$. Therefore,
the tuples are conjugate by an element of $K$, as was to be shown.
Explicitly, we have: $$
\psi_{1}=(u^{-1}gu')\psi_{2}(u^{-1}gu')^{-1},$$
where $u'$ normalizes $k_{1}'$, $u$ normalizes $k_{1}$, and $g$
is a permutation matrix which is not the identity only if there are
repeated eigenvalues. 
\end{proof}

\def\cdprime{$''$} \def\cdprime{$''$} \def\cprime{$'$} \def\cprime{$'$}
  \def\cprime{$'$} \def\cprime{$'$}
\providecommand{\bysame}{\leavevmode\hbox to3em{\hrulefill}\thinspace}
\providecommand{\MR}{\relax\ifhmode\unskip\space\fi MR }
\providecommand{\MRhref}[2]{%
  \href{http://www.ams.org/mathscinet-getitem?mr=#1}{#2}
}
\providecommand{\href}[2]{#2}

\end{document}